\let\oldinfty\infty
\let\oldemptyset\emptyset
\let\infty\oldinfty
\let\emptyset\oldemptyset
\DeclareMathSymbol{*}{\mathbin}{symbols}{"03}
\theoremstyle{definition}
\newtheorem{theorem}{Theorem}[section]
\newtheorem{proposition}[theorem]{Proposition}
\newtheorem{lemma}[theorem]{Lemma}
\newtheorem{corollary}[theorem]{Corollary}
\newtheorem{definition}[theorem]{Definition}
\newtheorem{notation}[theorem]{Notation}
\newtheorem{construction}[theorem]{Construction}
\newtheorem{variant}[theorem]{Variant}
\newtheorem{remark}[theorem]{Remark}
\newtheorem{example}[theorem]{Example}
\newtheorem{examples}[theorem]{Examples}
\numberwithin{equation}{theorem}
\setlist{%
  parsep=0ex, listparindent=\parindent,%
  itemsep=0.75ex, topsep=0.75ex,%
  leftmargin=2.5em,%
}
\setlist[enumerate, 1]{%
  label=(\roman*),%
  ref={\roman*},%
  widest=b,
}
\setlist[itemize, 1]{%
  label=--,%
}
\newcommand{\an}{{\rm an}}
\newcommand{\CH}{{\rm CH}}
\newcommand{\inj}{\hookrightarrow}
\newcommand{\End}{{\rm End}}
\newcommand{\Hom}{{\rm Hom}}
\newcommand{\Spec}{{\rm Spec\,}}
\newcommand{\Char}{{\rm char}}
\newcommand{\Tr}{{\rm Tr}}
\newcommand{\Hdg}{\mathrm{Hdg}}
\newcommand{\Gal}{{\rm Gal}}
\newcommand{\0}{\emptyset}
\newcommand{\sC}{{\mathcal C}}
\newcommand{\sE}{{\mathcal E}}
\newcommand{\sF}{{\mathcal F}}
\newcommand{\sK}{{\mathcal K}}
\newcommand{\sO}{{\mathcal O}}
\newcommand{\sU}{{\mathcal U}}
\newcommand{\sW}{{\mathcal W}}
\newcommand{\sX}{{\mathcal X}}
\newcommand{\sY}{{\mathcal Y}}
\newcommand{\A}{{\mathbb A}}
\newcommand{\C}{{\mathbb C}}
\newcommand{\G}{{\mathbb G}}
\renewcommand{\P}{{\mathbb P}}
\newcommand{\Q}{{\mathbb Q}}
\newcommand{\R}{{\mathbb R}}
\newcommand{\Z}{{\mathbb Z}}
\newcommand{\D}{\mathrm{D}}
\newcommand{\K}{\mathrm{K}}
\newcommand{\rG}{\mathrm{G}}
\newcommand{\M}{\mathrm{M}}
\newcommand{\rO}{\mathrm{O}}
\newcommand{\rR}{\mathrm{R}}
\newcommand{\rS}{\mathrm{S}}
\newcommand{\num}{{\operatorname{num}}}
\newcommand{\cyc}{{\operatorname{cyc}}}
\renewcommand{\det}{\operatorname{det}}
\newcommand{\id}{{\operatorname{id}}}
\newcommand{\Sch}{\mathrm{Sch}}
\newcommand{\op}{{\text{\rm op}}}
\newcommand{\<}{\langle}
\renewcommand{\>}{\rangle}
\renewcommand{\dim}{\operatorname{dim}}
\newcommand{\xr}[1]{\xrightarrow{#1}}
\newcommand{\Spc}{{\mathrm{Spc}}}
\newcommand{\lotimes}{\otimes^{\mathrm{L}}}
\newcommand{\Sm}{{\mathrm{Sm}}}
\newcommand{\Ab}{{\mathrm{Ab}}}
\newcommand{\Gr}{{\operatorname{\rm Gr}}} 
\newcommand{\rnk}{{\operatorname{\text{rnk}}}}
\newcommand{\W}{{\operatorname{W}}} 
\newcommand{\GW}{{\operatorname{GW}}} 
\newcommand{\sGW}{{\mathcal{GW}}}
\renewcommand{\H}{{\operatorname{H}}}
\newcommand{\SH}{{\operatorname{SH}}}
\newcommand{\T}{{\operatorname{T}}} 
\newcommand{\Th}{{\operatorname{Th}}} 
\renewcommand{\th}{{\operatorname{th}}} 
\newcommand{\Kos}{{\operatorname{Kos}}} 
\newcommand{\Aut}{{\operatorname{Aut}}}
\newcommand{\MW}{\mathrm{MW}}
\newcommand{\GL}{\operatorname{GL}}
\newcommand{\SL}{\operatorname{SL}}
\newcommand{\perf}{{\operatorname{perf}}}
\newcommand{\can}{\text{can}}
\newcommand{\ev}{\mathrm{ev}}
\DeclareMathOperator*{\iso}{{\simeq}}
\newcommand{\pr}{{\text{pr}}}
\newcommand{\BM}{\mathrm{B.M.}}
\newcommand{\SmL}{{\mathrm{Sm}\text{-}\mathrm{L}}}
\newcommand{\BiGr}{\mathrm{BiGr}}
\DeclareMathOperator{\HM}{\Pi_*}
\newcommand{\KGL}{{\operatorname{KGL}}}
\DeclareMathOperator{\Coh}{Coh}
\newcommand{\KO}{{\operatorname{KO}}} 
\newcommand{\BO}{{\operatorname{BO}}} 
\newcommand{\sRHom}{{\mathcal{RH}om}}
\newcommand{\ind}[1]{}
\newcommand{\inp}[1]{}
\newcommand{\bul}{{\raisebox{0.1ex}{\scalebox{0.6}{\hspace{0.08em}$\bullet$}}}}
\renewcommand{\top}{\mathrm{top}}
\begin{document}
\setcounter{tocdepth}{1}

\title{Motivic Gau{\ss}-Bonnet formulas}

\author{Marc Levine}
\address{Fakult\"at Mathematik\\
Universit\"at Duisburg-Essen\\
Thea-Leymann-Str. 9\\
45127 Essen\\
Germany}
\email{marc.levine@uni-due.de}

\author{Arpon Raksit}
\address{Department of Mathematics\\
Stanford University\\
450 Jane Stanford Way\\
Building 380\\
Stanford, CA 94305-2125\\
USA}
\email{arpon.raksit@gmail.com}

\keywords{Euler classes, Euler characteristics, hermitian K-theory, Chow ring}

\subjclass[2010]{14F42, 55N20, 55N35}

\begin{abstract}
  The apparatus of motivic stable homotopy theory provides a notion of Euler characteristic for smooth projective varieties, valued in the Grothendieck-Witt ring of the base field. Previous work of the first author and recent work of D\'eglise-Jin-Khan establishes a motivic Gau\ss-Bonnet formula relating this Euler characteristic to pushforwards of Euler classes in motivic cohomology theories. In this paper, we apply this formula to $\SL$-oriented motivic cohomology theories to obtain explicit characterizations of this Euler characteristic. The main new input is a uniqueness result for pushforward maps in $\SL$-oriented theories, identifying these maps concretely in examples of interest.
\end{abstract}

\maketitle

\tableofcontents

\section{Introduction}
\label{intro}

Let $k$ be a field and let $X$ be a smooth projective $k$-scheme. Let $\SH(k)$ denote the motivic stable homotopy category over $k$; recall that this comes equipped with the structure of a symmetric monoidal category, whose tensor product we denote $\wedge_k$ and whose unit object (the motivic sphere spectrum) we denote $1_k$.

Our starting point in this paper is the following fact, which shall be reviewed in \S\ref{sec:Dual}, and which goes back to the categorical notion of Euler characteristic introduced by Dold-Puppe \cite{DoldPuppe}.

\begin{proposition}
  \label{prop:IntroDual}
  The infinite suspension spectrum $\Sigma^\infty_\T X_+ \in \SH(k)$ is dualizable. In particular, we can associate to $X$ a natural \emph{Euler characteristic} $\chi(X/k) \in \End_{\SH(k)}(1_k)$, defined as the composition
  \[
    1_k \xr{\delta} \Sigma^\infty_\T X_+ \wedge_k (\Sigma^\infty_\T X_+)^\vee \xr{\tau} (\Sigma^\infty_\T X_+)^\vee \wedge_k \Sigma^\infty_\T X_+ \xr{\epsilon} 1_k,
  \]
  where the maps $\delta$ and $\epsilon$ are the coevaluation and evaluation that comprise the duality, and $\tau$ is the symmetry isomorphism.
\end{proposition}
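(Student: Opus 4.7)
The plan is to establish dualizability of $\Sigma^\infty_\T X_+$ via a motivic version of Atiyah duality; once dualizability is in hand, the displayed composition defining $\chi(X/k)$ is the standard categorical Euler characteristic of Dold-Puppe and requires no further argument.

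Let $p \colon X \to \Spec k$ denote the structure morphism. I would invoke the six-functor formalism on $\SH$ (as developed by Ayoub and Cisinski-Déglise). Smoothness of $p$ gives the relative purity equivalence $p^! \simeq \Sigma^{T_{X/k}} p^*$, where $\Sigma^{T_{X/k}}$ denotes suspension by the Thom spectrum of the tangent bundle; properness of $p$ gives $p_! \simeq p_*$. Combining these with the identification $\Sigma^\infty_\T X_+ \simeq p_\sharp 1_X$ and the smooth-ambidexterity formula $p_\sharp \simeq p_! \Sigma^{T_{X/k}}$ yields a motivic Atiyah duality equivalence
\[
  (\Sigma^\infty_\T X_+)^\vee \simeq p_* \Sigma^{-T_{X/k}} 1_X,
\]
which in particular witnesses dualizability.

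To unpack the duality into concrete coevaluation $\delta$ and evaluation $\epsilon$, I would exhibit them as follows. The coevaluation arises from a motivic Pontryagin-Thom collapse built from a closed immersion $X \inj \P^N_k$ (this is where projectivity enters) combined with the homotopy purity theorem of Morel-Voevodsky. The evaluation arises from the diagonal $\Delta \colon X \to X \times_k X$, which, since $X$ is smooth, is a regular closed immersion with normal bundle $T_{X/k}$; its Gysin map supplies $\epsilon$ after smashing with the appropriate Thom suspensions. The triangle identities then reduce to base-change compatibilities that are part of the six-functor package.

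The main obstacle is really the overhead of the six-functor formalism: checking relative purity, ambidexterity, and the triangle identities from scratch in $\SH(k)$ is substantial. The cleanest route, which I expect \S\ref{sec:Dual} to take, is to cite the established motivic Atiyah duality (Hu, Riou, Ayoub, Cisinski-Déglise) rather than reprove it, and then observe that $\chi(X/k)$, as defined by the displayed composition, is an invariant of the isomorphism class of the dualizable object $\Sigma^\infty_\T X_+ \in \SH(k)$ landing in the commutative ring $\End_{\SH(k)}(1_k)$.
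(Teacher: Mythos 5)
Your overall route is the same as the paper's: \S\ref{sec:Dual} also works entirely inside the six-functor formalism and ultimately cites the established duality statement (\cite[Corollary 6.13]{Hoyois6}) for the triangle identities, so ``invoke motivic Atiyah duality, then note that the displayed composite is the Dold--Puppe Euler characteristic'' is exactly what is done. One difference of detail: the paper never chooses an embedding $X \inj \P^N_k$. The coevaluation is produced purely formally from the wrong-way functoriality of Borel--Moore motives applied to the proper map $\pi_X$ (Remark~\ref{rmk:BMFunctor}: the unit of $\pi_X^*\dashv \pi_{X*}$ together with $\pi_{X!}\simeq\pi_{X*}$) followed by the diagonal, and the evaluation from the diagonal's Borel--Moore pullback followed by $\pi_{X*}$ (Construction~\ref{constr:SWDual}); your Pontryagin--Thom/homotopy-purity construction via a projective embedding is the older Voevodsky--Riou style argument and would also work, but it is not needed here.

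There is one slip you should correct: your formula for the dual object is off by a Thom twist. Since $\pi_X$ is proper we have $\pi_{X!}\simeq\pi_{X*}$, and smoothness gives $\pi_{X!}\simeq\pi_{X\sharp}\circ\Sigma^{-T_{X/k}}$, so the dual of $\Sigma^\infty_\T X_+\simeq\pi_{X\sharp}(1_X)$ is
\[
\pi_{X!}(1_X)\simeq\pi_{X*}(1_X)\simeq\pi_{X\sharp}\Sigma^{-T_{X/k}}(1_X)=\Th(-T_{X/k}),
\]
not $\pi_{X*}\Sigma^{-T_{X/k}}(1_X)$: the latter is $\pi_{X\sharp}\Sigma^{-2T_{X/k}}(1_X)$, the Thom spectrum of $-2T_{X/k}$, because you have conflated $\pi_{X*}$ (which already carries the $-T_{X/k}$ twist, via $\pi_{X*}\simeq\pi_{X!}\simeq\pi_{X\sharp}\Sigma^{-T_{X/k}}$) with $\pi_{X\sharp}$. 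This does not affect the bare assertion of dualizability, which is all the proposition needs, but the correct identification of the dual as $\Th(-T_{X/k})$ is used essentially later (e.g.\ in Lemma~\ref{lem:simp} and Theorem~\ref{thm:GaussBonnet}), so it is worth getting right.
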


For $k$ perfect, a theorem of Morel identifies $\End_{\SH(k)}(1_k)$ with the Grothendieck-Witt group $\GW(k)$, i.e. the Grothendieck group of $k$-vector spaces equipped with a nondegenerate symmetric bilinear form. Hence, we may think of the Euler characteristic $\chi(X/k)$ as a class in $\GW(k)$. It is then natural to wonder whether there is an explicit interpretation of this Euler characteristic in terms of symmetric bilinear forms. An intuitive speculation is that the Euler characteristic should be given by the value of some cohomology theory on $X$, equipped with an intersection pairing.

One of the main results in this paper is to make precise and confirm this speculation. To state the result, we recall that classes in $\GW(k)$ can be represented not just by nondegenerate symmetric bilinear forms on $k$-vector spaces, but also by nondegenerate symmetric bilinear forms on perfect complexes over $k$ (see \S\ref{sec:KO} for a review of what this means). With this in mind, we give the following explicit interpretation of the Euler characteristic $\chi(X/k)$.

\begin{construction}
 Suppose that $X$ is of pure dimension $d$. Then we have the \emph{Hodge cohomology} groups $\H^i(X;\Omega^j_{X/k})$ for $0 \le i,j \le d$ and the canonical trace map
  \[
    \Tr : \H^d(X;\Omega^d_{X/k}) \to k.
  \]
  We define a perfect complex of $k$-vector spaces (with zero differential),
  \[
    \Hdg(X/k) := \bigoplus_{i,j=0}^d \H^i(X,\Omega^j_{X/k})[j-i],
  \]
  and the trace map defines a nondegenerate symmetric bilinear form on $\Hdg(X/k)$ via the pairings
  \[
    \H^i(X,\Omega^j_{X/k}) \otimes_k \H^{d-i}(X,\Omega^{d-j}_{X/k}) \xr{\cup} \H^d(X,\Omega^d_{X/k}) \xr{\Tr} k,
  \]
  where the first map denotes the cup product (that this is indeed a nondegenerate symmetric bilinear form will be shown in \S\ref{sec:KO}). We thus obtain a Grothendieck-Witt class $(\Hdg(X/k),\Tr) \in \GW(k)$. This construction extends in an evident manner to the case that $X$ is not necessarily of pure dimension.
\end{construction}

The following formula for $\chi(X/k)$ was proposed by J-P.~Serre.\footnote{private communication to ML, 28.07.2017}

\begin{theorem}
  \label{thm:IntroHodge}
  Assume that $k$ is a perfect field of characteristic different from two. 
  Then $\chi(X/k) = (\Hdg(X/k), \Tr) \in \GW(k)$.
\end{theorem}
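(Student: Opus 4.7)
The plan is to deduce Theorem~\ref{thm:IntroHodge} from the motivic Gauß-Bonnet formula alluded to in the abstract, applied to the $\SL$-oriented ring spectrum given by hermitian K-theory $\KO$---which in characteristic $\ne 2$ is $\SL$-oriented and satisfies $\KO^0(k) = \GW(k)$ (matching the identification $\End_{\SH(k)}(1_k) = \GW(k)$ for perfect $k$). Gauß-Bonnet then yields
\[
  \chi(X/k) \;=\; \pi_{X,*}\bigl(e(T_{X/k})\bigr) \;\in\; \KO^0(k) \;=\; \GW(k),
\]
reducing the problem to computing the $\KO$-Euler class of the tangent bundle and identifying the $\KO$-pushforward concretely.

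For the first ingredient I would appeal to the standard fact that in an $\SL$-oriented theory the Euler class of a rank-$d$ bundle $V$ is represented by the Koszul complex $\Lambda^\bullet V^\vee$ equipped with its canonical non-degenerate pairing into $\det V^\vee$; for $V = T_{X/k}$ this is the sheafified de Rham complex $\Omega^\bullet_{X/k}$ with the wedge-product pairing $\Omega^j_{X/k} \otimes \Omega^{d-j}_{X/k} \to \omega_{X/k}$. For the second, I would invoke the uniqueness theorem for $\SL$-oriented pushforward maps---the main new input of the paper---to identify $\pi_{X,*} \colon \KO(X) \to \KO(k)$ with the classical Grothendieck-Serre pushforward: a perfect complex $\sF$ on $X$ together with a non-degenerate pairing $\sF \otimes^{\mathrm{L}} \sF \to \omega_{X/k}[d]$ is sent to $\rR\Gamma(X,\sF)$ with the pairing induced by cup product followed by the Serre trace $\Tr \colon \H^d(X,\omega_{X/k}) \to k$. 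Applied to the Koszul representative of $e(T_{X/k})$, this produces $\rR\Gamma(X,\Omega^\bullet_{X/k})$ (regarded as a zero-differential complex), which in the derived category of $k$-vector spaces splits as $\bigoplus_{i,j} \H^i(X,\Omega^j_{X/k})[j-i]$ with the cup/trace pairing---exactly $(\Hdg(X/k),\Tr)$.

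The main obstacle is the third step: one must show that the abstract pushforward produced by the six-functor formalism for $\KO$ coincides, \emph{as a map equipped with symmetric bilinear forms} and not just on underlying classes, with Grothendieck-Serre duality equipped with its classical pairing. The uniqueness theorem is precisely engineered to reduce this to checking agreement on a short list of generating examples (typically the structure maps of projective spaces, plus compatibility with smooth base change and external products), but the matching of signs and the handling of the trivialization of $\det T_{X/k}$ that governs the $\SL$-orientation both require care. Once that identification is secured, Steps one, two and four are essentially formal, and the rank calculation recovers the classical formula $\sum_{i,j}(-1)^{i+j}\dim_k \H^i(X,\Omega^j_{X/k})$ for the topological Euler characteristic.
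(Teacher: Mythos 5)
Your overall strategy is exactly the one the paper follows: motivic Gau{\ss}-Bonnet for the $\SL$-oriented spectrum $\BO$, the Koszul-complex description of the Thom/Euler class, and the uniqueness theorem to identify the six-functor pushforward with the Grothendieck--Serre (Calm\`es--Hornbostel) pushforward. But there is a genuine gap in your third step: Theorem~\ref{thm:SLUniqueness} cannot be applied to $\BO$ itself, because none of its hypotheses are available there --- $\BO$ is not $\GL$-oriented, $\eta$ does not act invertibly on it, and the vanishing condition in case (iii) is not known for $\BO_+$. What the paper actually does is apply the uniqueness theorem to the $\eta$-inverted theory $\BO_\eta$ (case (ii), via Ananyevskiy's identification $\BO[\eta^{-1}]^{*,*}\simeq \W^*[\eta,\eta^{-1}]$), after first checking the zero-section hypothesis, i.e.\ that the Calm\`es--Hornbostel pushforward of the unit along the zero-section of a vector bundle reproduces the Koszul Thom class $\th_V$ with its wedge pairing. (Your ``short list of generating examples, typically the structure maps of projective spaces'' is not the right generating check: the hypothesis of the uniqueness theorem is agreement for zero-sections of vector bundles; projective spaces are handled inside the proof of the theorem.) This only yields the identity $q(\chi(X/k))=q(\Hdg(X/k),\Tr)$ in the Witt group $\W(k)$, not in $\GW(k)$.

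To promote the equality from $\W(k)$ to $\GW(k)$ you need a second input that your proposal omits: the injectivity of $(\rank,q):\GW(k)\to \Z\times\W(k)$ together with a computation of the rank, which the paper supplies by running Gau{\ss}-Bonnet separately for algebraic K-theory (Theorem~\ref{thm:EulerKThy}), giving $\rank\,\chi(X/k)=\sum_{i,j}(-1)^{i+j}\dim_k\H^i(X,\Omega^j_{X/k})$. Only the combination of the Witt-level identity and the rank identity pins down $\chi(X/k)$ in $\GW(k)$. (A smaller point: one must also verify that the unit map $u^{\BO}$ induces the identity $\GW(k)\to\KO^{[0]}_0(k)$ under Morel's isomorphism, which the paper does via Ananyevskiy's computation of the image of $\<\lambda\>$; your appeal to ``$\KO^0(k)=\GW(k)$'' silently assumes this compatibility.) With these additions your outline matches the paper's proof of Theorem~\ref{thm:EulerCharKO} and Corollary~\ref{cor:EulChar1}.
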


We in fact prove a more general result over a base-scheme $B$; see 
Theorem~\ref{thm:EulerCharKO}  and Corollary~\ref{cor:EulChar1} for details.

In the case $k=\R$, a class in $\GW(k)$ is determined by two $\Z$-valued invariants, rank and signature, and Theorem~\ref{thm:IntroHodge} reproves the following known result (see \cite[Theorem 1]{Abelson} and \cite[Theorem A]{Kharlamov}).

\begin{corollary}
  \label{cor:IntroReal}
  Suppose that $k=\R$ and $X$ is of even pure dimension $2n$. Then the symmetric bilinear form
  \[
    \H^n(X,\Omega^n_{X/\R}) \times \H^n(X,\Omega^n_{X/\R}) \xr{\cup} \H^{2n}(X,\Omega^{2n}_{X/\R}) \xr{\Tr} \R
  \]
  has signature equal to $\chi^\top(X(\R))$, the classical Euler characteristic of the real points of $X$ in the analytic topology. In particular, we have
  \[
    |\chi^\top(X(\R))| \le \dim_\R \H^n(X,\Omega^n_{X/\R}).
  \]
\end{corollary}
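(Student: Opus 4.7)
The strategy is to combine Theorem~\ref{thm:IntroHodge} with two standard facts: (a) a class in $\GW(\R)$ is detected by its rank and signature; and (b) the real realization functor $\SH(\R)\to\SH_{\top}$ is symmetric monoidal, sends $\Sigma^\infty_\T X_+$ to $\Sigma^\infty X(\R)_+$, and induces the signature ring homomorphism $\operatorname{sign}\colon\GW(\R)\to\pi_0(S^0)=\Z$ on endomorphisms of the unit (the last point is checkable on $\langle-1\rangle\in\GW(\R)$, which real-realizes to $-1$). By functoriality of the categorical Euler characteristic under symmetric monoidal functors, (b) yields $\operatorname{sign}(\chi(X/\R))=\chi^\top(X(\R))$, and by Theorem~\ref{thm:IntroHodge} we have $\chi(X/\R)=(\Hdg(X/\R),\Tr)$ in $\GW(\R)$. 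It therefore suffices to show that $\operatorname{sign}((\Hdg(X/\R),\Tr))$ equals the signature of the cup-product/trace form on $\H^n(X,\Om^n_{X/\R})$.

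To this end I would pair off the summands
\[
\H^i(X,\Om^j_{X/\R})[j-i]\ \oplus\ \H^{2n-i}(X,\Om^{2n-j}_{X/\R})[i-j]
\]
of $\Hdg(X/\R)$, indexed by unordered pairs $\{(i,j),(2n-i,2n-j)\}$. When $(i,j)\neq(n,n)$, Serre duality identifies the second summand (with its shift) as the $\R$-linear dual of the first, and the restriction of the trace pairing to this pair is the tautological hyperbolic form on $V\oplus V^{\vee}$. Such a form represents a hyperbolic class in $\GW(\R)$ --- a standard fact, since a symmetric form on a perfect complex of shape $V\oplus V^{\vee}$ with the evaluation pairing is metabolic, and metabolic equals hyperbolic over a field of characteristic $\neq 2$ --- and hence has signature $0$. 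Only the diagonal summand $(i,j)=(n,n)$ contributes nontrivially, and its contribution is exactly the signature of the cup-product/trace form on $\H^n(X,\Om^n_{X/\R})$. Combining with the previous paragraph proves the first assertion; the bound $|\chi^\top(X(\R))|\le\dim_\R\H^n(X,\Om^n_{X/\R})$ is then immediate from the elementary inequality $|\operatorname{sign}(\varphi)|\le\dim_\R V$ for any symmetric bilinear form $\varphi$ on a finite-dimensional real vector space $V$.

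The main obstacle is the bookkeeping in the hyperbolic argument: one must verify that Serre duality on $X$ is compatible with the cup-product/trace pairing defined in the construction preceding Theorem~\ref{thm:IntroHodge}, so that the paired summand genuinely presents as $V\oplus V^{\vee}$ with its canonical hyperbolic pairing in the Grothendieck-Witt category of perfect complexes, and then invoke the correct formal statement identifying such a class as hyperbolic. Both inputs are standard, but spelling them out carefully --- especially tracking signs and shifts --- is where the care lies.
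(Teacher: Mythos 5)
Your proposal is correct and takes essentially the same route as the paper: the paper cites \cite[Remarks 1.11]{LevEnum} for the fact that $\chi^\top(X(\R))$ is the signature of $\chi(X/\R)\in\GW(\R)$ (your real-realization sketch), and its Corollary~\ref{cor:EulChar1} carries out exactly your pairing-off of the off-diagonal Hodge summands --- there each shifted pair $V[n]\oplus V^\vee[-n]$ contributes $(-1)^n H$, so signature $0$ in any case --- leaving only the middle cup-product form $Q$. The only difference is that you inline these two ingredients rather than citing them, and your ``hyperbolic'' claim should strictly be ``$\pm$ the hyperbolic class'' depending on the parity of the shift, which is immaterial for the signature.
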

This is Corollary~\ref{cor:EulChar2} in the text.

Besides giving an explicit formula for the rather abstractly defined $\chi(X/k)$, Theorem~\ref{thm:IntroHodge} opens the way to computing $\chi(X/k)$ in the situation that $X$ is a twisted form of another $k$-scheme $Y$, namely by twisting the symmetric bilinear form $(\Hdg(Y/k), \Tr)$ by the descent data for $X$. This cannot be done with the class $\chi(Y/k)\in \GW(k)$, as $\GW(-)$ does not satisfy Galois descent. This is discussed in detail in \S\ref{subsec:Descent}.

Let us now explain our methods for proving Theorem~\ref{thm:IntroHodge}. The idea is to use the theory of Euler classes in motivic cohomology theories. More specifically, our focus is on cohomology theories represented by $\SL$-oriented motivic ring spectra; recall that this refers to a commutative monoid object $\sE$ in $\SH(k)$ equipped with a compatible system of Thom classes for oriented vector bundles (where an orientation is a specified trivialization of the determinant line bundle). The example of interest for proving Theorem~\ref{thm:IntroHodge} is hermitian K-theory; other examples of interest include Chow-Witt theory, ordinary motivic cohomology, and algebraic K-theory (the last two are actually $\GL$-oriented, meaning they have Thom classes for all vector bundles).  The assumption that $k$ has characteristic different from two in Theorem~\ref{thm:IntroHodge} arises from  this use of hermitian K-theory, which at present is only known to satisfy the properties we need when $\Char(k) \neq 2$; we do not know of  any counter-examples to our formula for $\chi(X/k)$ for $k$ of characteristic two.

Given an $\SL$-oriented motivic ring spectrum $\sE \in \SH(k)$, one may define certain pushforward maps in twisted $\sE$-cohomology. Namely, if $Y$ and $Z$ are smooth quasi-projective $k$-schemes, $f : Z \to Y$ is a proper morphism of relative dimension $d \in \Z$, and $L$ is a line bundle on $Y$, then there is a pushforward map
\[
  f_* : \sE^{a,b}(Z;\omega_{Z/k} \otimes f^*L) \to \sE^{a-2d,b-d}(Y;\omega_{Y/k}\otimes L),
\]
where $\omega_{-/k}$ denotes the canonical bundle. This is defined abstractly via the six-functor formalism for motivic stable homotopy theory.

We note two key examples of these pushforwards, assuming our smooth projective variety $X$ is of pure dimension $d$ for simplicity:
\begin{itemize}
\item the structural morphism $\pi : X \to \Spec(k)$ gives a pushforward map
  \[
    \pi_* : \sE^{2d,d}(X,\omega_{X/k}) \to \sE^{0,0}(\Spec k);
  \]
\item given a vector bundle $p : V \to X$, the zero section $s : X \inj V$ gives a pushforward map
  \[
    s_* : \sE^{0,0}(X) \to \sE^{2d,d}(V;p^*\det^{-1}(V)).
  \]
\end{itemize}
The first should be thought of as a kind of integration map. The second allows us to define the \emph{Euler class} of a vector bundle $V \to X$,
\[
  e^\sE(V) := s^*s_*(1) \in \sE^{2d,d}(X;\det^{-1}(V)),
\]
where $s$ again denotes the zero-section, and $1 \in \sE^{0,0}(X)$ denotes the unit element.

Using the above notions, we may state the following motivic version of the classical Gau\ss-Bonnet formula equating the Euler characteristic with the integral of the Euler class of the tangent bundle; this result is a fairly immediate consequence of  \cite[Lemma 1.5]{LevEnum}:

\begin{theorem}[Motivic Gau\ss-Bonnet]
  \label{thm:IntroGaussBonnet}
  Let $\sE$ be an $\SL$-oriented motivic ring spectrum in $\SH(k)$. Let $u : 1_k \to \sE$ denote the unit map, inducing the map $u_* : \GW(k) \simeq 1_k^{0,0}(\Spec k) \to \sE^{0,0}(\Spec k)$. Then we have
  \[
    u_*(\chi(X/k)) = \pi_*(e^\sE(T_{X/k})) \in \sE^{0,0}(\Spec k).
  \]
\end{theorem}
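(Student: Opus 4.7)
The plan is to derive the formula from motivic Atiyah duality combined with the self-intersection formula for the diagonal, which is essentially the content of \cite[Lemma~1.5]{LevEnum}; the argument amounts to carefully matching that abstract statement against the setup here.

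First, I would invoke Atiyah duality in $\SH(k)$: for $\pi\colon X \to \Spec k$ smooth and proper of pure relative dimension $d$, relative purity supplies an isomorphism $\pi^! 1_k \simeq \Sigma^{T_{X/k}} 1_X$ in $\SH(X)$, which presents $(\Sigma^\infty_\T X_+)^\vee$ as the pushforward of the Thom spectrum of $-T_{X/k}$. Under this identification, the coevaluation $\delta$ is given by pushing forward along the diagonal $\Delta\colon X \hookrightarrow X \times_k X$ (whose normal bundle is canonically $T_{X/k}$), while the evaluation $\epsilon$ is the dual collapse map obtained from $\pi$.

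Next, I would unwind the composition $\epsilon \circ \tau \circ \delta$ in the definition of $\chi(X/k)$ against this geometric description. A diagram chase reveals that the composite collapses to the trace
\[
  \chi(X/k) = \pi_* \circ \Delta^* \circ \Delta_*(1),
\]
where $\Delta_*$ and $\pi_*$ are the six-functor pushforwards in $\SH(k)$, taken with the appropriate canonical-bundle twists. Applying the unit map $u\colon 1_k \to \sE$ and using that $T_{X/k}$ inherits a canonical $\SL$-orientation from the identification $\det T_{X/k} \simeq \omega_{X/k}^{-1}$, so that both $\pi_*$ and $\Delta_*$ descend to the $\sE$-cohomology side compatibly with $u_*$, we obtain
\[
  u_*(\chi(X/k)) = \pi_*\bigl(\Delta^* \Delta_*(1)\bigr) \in \sE^{0,0}(\Spec k).
\]
The self-intersection formula for the regular closed immersion $\Delta$ then identifies $\Delta^* \Delta_*(1)$ with the Euler class $e^\sE(T_{X/k}) \in \sE^{2d,d}(X;\det^{-1} T_{X/k})$ of the normal bundle. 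Since $\det^{-1} T_{X/k} = \omega_{X/k}$, the twisted pushforward $\pi_*$ sends this class to $\sE^{0,0}(\Spec k)$, and substituting yields $u_*(\chi(X/k)) = \pi_*(e^\sE(T_{X/k}))$, as desired.

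The hard part is the bookkeeping in the middle step: one has to verify that the abstractly defined duality pushforwards in $\SH(k)$ match, after applying $u_*$, with the concrete $\SL$-oriented pushforwards that define $e^\sE$ and $\pi_*$ on the $\sE$-side, including the correct handling of the $\omega_{X/k}$-twists. This is exactly the compatibility packaged by \cite[Lemma~1.5]{LevEnum}, and with it in hand the remainder of the argument is essentially formal.
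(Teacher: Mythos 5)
Your proposal is correct and follows essentially the same route as the paper: the paper likewise reduces via \cite[Lemma 1.5]{LevEnum} (Lemma~\ref{lem:simp}) to the composition $\pi_{X*}\circ\beta_{X/B}\circ\pi_X^*$, identifies $\beta_{X/B}$ with the diagonal collapse map $\bar{s}_{T_{X/B}}$ via Morel--Voevodsky purity for $N_{\Delta_X}\simeq T_{X/k}$, and then recognizes $\bar{s}^*\th_{T_{X/k}}=s^*s_*(1)=e^\sE(T_{X/k})$. The "bookkeeping" you defer is carried out in the paper by Lemma~\ref{lem:InclThomIso} (showing the six-functor zero-section pushforward of the unit is the canonical Thom class) together with the functoriality of the twisted Thom isomorphisms $\vartheta^\sE_{-}$, rather than by the cited lemma itself.
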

A general motivic Gau\ss-Bonnet formula is also proven in \cite[Theorem 4.6.1]{DJK}, which implies the above formula by applying the unit map. Our method is somewhat different from \cite{DJK} in that we replace their general theory of Euler classes with the more special version for $\SL$-oriented theories used here; see Theorem~\ref{thm:GaussBonnet} below for our general statement of this result.

As stated above, we deduce Theorem~\ref{thm:IntroHodge} from Theorem~\ref{thm:IntroGaussBonnet} by considering the example of hermitian K-theory, $\sE = \BO$. In this case, the map $u_* : \GW(k) \to \BO^{0,0}(\Spec k)$ is an isomorphism. The deduction requires an explicit understanding of both the Euler class $e^\sE(T_{X/k})$ and the pushforward $\pi_*$ in hermitian K-theory; the former is fairly straightforward, but the latter requires new input.

What we do is identify the abstractly defined projective pushforward maps in hermitian K-theory with the concrete ones defined in terms of pushforward of sheaves and Grothendieck-Serre duality. This comparison follows from a uniqueness result we prove for pushforward maps in an $\SL$-oriented theory $\sE$, characterizing them, under certain further hypotheses on $\sE$, in terms of their behavior in the case of the inclusion of the zero-section of a vector bundle (which is governed by Thom isomorphisms). We leave the detailed statement of this result to the body of the paper (see Theorem~\ref{thm:SLUniqueness}), as it would take too long to spell out here.

\begin{remark}
  \label{rmk:BachmannWickelgren}
  The recent paper of Bachmann--Wickelgren \cite{BachmannWickelgren} discusses results closely related to those discussed here. For example, they identify the abstract pushforward maps in hermitian K-theory with those defined by Grothendieck-Serre duality in the case of a finite syntomic morphism (as opposed to the case of a smooth and proper morphism between smooth schemes addressed here). Moreover, combining their identifications of various Euler classes with our motivic Gauss-Bonnet formula, one may recover Theorem~\ref{thm:IntroHodge} above.
\end{remark}

\subsection*{Outline}

The paper is organized as follows. In \S\ref{sec:Dual}, we review the basic framework of motivic homotopy theory, as well as relevant aspects of the dualizability result Proposition~\ref{prop:IntroDual}. In \S\ref{sec:SL}, we review basic facts about $\SL$-oriented motivic ring spectra. In \S\ref{sec:Push}, we describe the abstractly defined pushforwards in the twisted cohomology theory arising from an $\SL$-oriented motivic ring spectrum. In \S\ref{sec:GaussBonnet}, we prove the general Gau\ss-Bonnet formula for $\SL$-oriented motivic ring spectra. In \S\ref{sec:SLOrCohThy}, we axiomatize the features of the twisted cohomology theory arising from an $\SL$-oriented motivic ring spectrum. In \S\ref{sec:Uniq}, we use these axiomatics to prove our unicity/comparison theorem characterizing the pushforward maps in $\SL$-oriented theories. And finally, in \S\ref{sec:App}, we apply the previous results in specific examples of $\SL$-oriented theories to obtain various concrete consequences, in particular proving Theorem~\ref{thm:IntroHodge}.

\subsection*{Acknowledgements}

Both authors thank the referees for their detailed comments and suggestions, which helped us correct a number of ambiguities in an earlier version and have greatly improved the exposition. 

AR: I would like to thank my PhD advisor, S\o ren Galatius, for many suggestions and ideas that led to my thinking about the questions discussed in this paper. I would also like to thank Tony Feng and Jesse Silliman for helpful conversations. My work has been supported by an NSF Graduate Research Fellowship.

ML: I would like to thank J-P.~Serre for proposing the formula Theorem~\ref{thm:IntroHodge} for the motivic Euler characteristic and J-P.~Serre and Eva Bayer-Fluckiger  for an enlightening correspondence on computations in the case of geometrically rational surfaces.  My work has been supported by the DFG through the SFB Transregio 45 and the SPP 1786.

\section{Duality and Euler characteristics}
\label{sec:Dual}

In this section, we review the strong dualizability of smooth projective schemes as objects of the stable motivic homotopy category, which supplies a notion of Euler characteristic for these schemes. We additionally recall a result from \cite{LevEnum} that gives an alternative characterization of this Euler characteristic.

\subsection{Preliminaries}

Let us first recall the basic framework of stable motivic homotopy theory, which will be used throughout the paper.

\begin{notation}
  Throughout the paper, we let $B$ denote a noetherian separated base scheme of finite Krull dimension. Let $\Sch_B$ denote the category of quasi-projective $B$-schemes, that is, $B$-schemes $X \to B$ that admit a closed immersion $i : X \inj U$ over $B$, with $U$ an open subscheme of $\P^N_B$ for some $N$. Let $\Sch_B^\pr$ denote the subcategory of $\Sch_B$ with the same objects as $\Sch_B$ but with morphisms the proper morphisms. Let $\Sm_B$ denote the full subcategory of $\Sch_B$ with objects the smooth (quasi-projective) $B$-schemes. (The same notation will be used when working over schemes other than $B$.) For $X$ a $B$-scheme, we will usually denote the structure morphism by $\pi_X:X\to B$.
\end{notation}

\begin{notation}
  \label{ntn:SH}
  Given $X \in \Sch_B$, we let $\SH(X)$ denote the stable motivic homotopy category over $X$. We will rely on the six-functor formalism for this construction, as established in \cite{Ayoub, Hoyois6}. In particular, for each morphism $f : Y \to X$ in $\Sch_B$, one has the adjoint pairs of functors
  \[
    \xymatrix{\SH(X)\ar@<3pt>[r]^{f^*}&\SH(Y)\ar@<3pt>[l]^{f_*}}
    \quad\text{and}\quad
    \xymatrix{\SH(Y)\ar@<3pt>[r]^{f_!}&\SH(X)\ar@<3pt>[l]^{f^!}};
  \]
  natural isomorphisms $(gf)^*\simeq f^*g^*$, $(gf)^!\simeq f^!g^!$, $(gf)_*\simeq g_*f_*$, $(gf)_!\simeq g_!f_!$ for composable morphisms, with the usual associativity; a natural transformation $\eta^f_{!*}:f_!\to f_*$, which is an isomorphism if $f$ is proper. There are various base-change morphisms, which we will recall as needed. In addition, for $f$ smooth, there is a further adjoint pair
  \[
    \xymatrix{\SH(Y)\ar@<3pt>[r]^{f_\sharp}&\SH(X)\ar@<3pt>[l]^{f^*}}.
  \]
There is also the symmetric monoidal structure on $\SH(X)$; we denote the tensor product by $\wedge_X$ and the unit by $1_X\in \SH(X)$. For $f$ a closed immersion, we have the adjoint pair $f_*\dashv f^!$ arising from a corresponding adjoint pair in the unstable setting, so we will take $f_!=f_*$ with $\eta^f_{!*}=\id$. Similarly,  if $f$ is an open immersion, we have a canonical isomorphism of adjoint pairs $(f_\#\dashv f^*)\cong (f_!\dashv f^!)$, so we take $f_!=f_\#$ and $f^!=f^*$.

  We also have the unstable motivic homotopy category $\H_\bul(X)$, which we recall is a localization of the category $\Spc_\bul(X)$ of presheaves of pointed simplicial sets on $\Sm_X$. For $Y\to X$ in $\Sm_X$, we write $Y_+$ for the presheaf represented by the $X$-scheme $Y\amalg X\to X$, that is, the presheaf $Z\mapsto \Hom_{\Sm_X}(X, Y)_+$ (where here $(-)_+$ denotes addition of a disjoint basepoint to a set and we regard a set as a constant simplicial set). The category $\Spc_\bul(X)$ has a canonical symmetric monoidal structure with unit object $X_+$, and $\H_\bul(X)$ inherits this structure via the localization functor. Finally, we have the infinite suspension functor $\Sigma^\infty_\T:\H_\bul(X)\to \SH(X)$, which is canonically symmetric monoidal, so that in particular we have a canonical identification $1_X \simeq \Sigma^\infty_\T X_+$.

  For $f : Y \to X$ a smooth morphism in $\Sch_B$, the adjoint pair $f_\sharp \dashv f^*$ mentioned above for stable motivic homotopy categories arises from an adjoint pair in the unstable setting,
  \[
    \xymatrix{\H_\bul(Y)\ar@<3pt>[r]^{f_\sharp}&\H_\bul(X)\ar@<3pt>[l]^{f^*}},
  \]
  where the left adjoint is induced by the functor $f_\#:\Spc_\bul(Y)\to \Spc_\bul(X)$ obtained as the left Kan extension of the functor $f_\#:\Sm_Y\to \Spc_\bul(X)$ sending $p:W\to Y$ to $(f\circ p:W\to X)_+$.  

  We often write $-/X$ for the functor $\Sigma^\infty_\T:\H_\bul(X)\to \SH(X)$, and if $\pi_X:X\to B$ is smooth, then we write $-/B$ for the functor $\pi_{X\#}\circ\Sigma^\infty_\T:\H_\bul(X)\to \SH(B)$. We use the same notation to denote the precompositions of these functors with the functor $(-)_+ : \Sm_X \to \H_\bul(X)$.
\end{notation}

\begin{remark}
  For $p : E \to X$ an affine space bundle in $\Sch_B$ (that is, a torsor in the Zariski topology for a vector bundle), the composition $p_\sharp \circ p^*$ is an autoequivalence of $\SH(X)$ (this is a formulation of homotopy invariance). 
\end{remark}

\begin{notation}[The localization triangle]
  Let $j:U\to X$ be an open immersion in $\Sch_B$ with (reduced) complement $i:Z\to X$. We have the {\em localization distinguished triangle} of endofunctors on $\SH(X)$
  \begin{equation}\label{eqn:Localization}
    j_!j^!\to \id_{\SH(X)}\to i_*i^*\to j_!j^![1],
  \end{equation}
  where the morphism $j_!j^!\to \id_{\SH(X)}$ is the counit of the adjunction $j_!\dashv j^!$ and the morphism $\id_{\SH(X)}\to i_*i^*$ is the unit of the adjunction $i^*\dashv i_*$.  Moreover $i_*=i_!$, $j_!=j_\#$ and $j^!=j^*$. 

  We often write $X_Z/X$ for $i_*(1_Z)\in \SH(X)$. With this notation (and that of Notation~\ref{ntn:SH}), applying \eqref{eqn:Localization} to $1_X$ gives us the distinguished triangle in $\SH(X)$,
  \[
    U/X\xr{j/X} X/X\to X_Z/X\to U/X[1];
  \]
  in other words, we have a canonical isomorphism $X_Z/X \simeq \Sigma^\infty_T(X/U)$; accordingly, we often write $X_Z$ for the quotient presheaf $X/U$ in $\H_\bul(X)$. 
\end{notation}

\begin{notation}[Suspension and Thom spaces]
  Let $p : V \to X$ be a vector bundle over some $ X \in \Sch_B$, with zero-section $s : X \inj V$. We have the endofunctors
  \[
    \Sigma^{-V}, \Sigma^V:\SH(X)\to \SH(X)
  \]
  defined by $\Sigma^{-V}:=s^!p^*$ and $\Sigma^V:=p_\sharp\circ s_*$.
  These are in fact inverse autoequivalences. 

  The endofunctor $\Sigma^V$ can also be written in terms of Thom spaces. Setting $0_V := s(X) \subset V$, the Thom space of the vector bundle is defined as
  \[
    \Th_X(V) := V/(V \setminus 0_V) \in \H_\bul(X).
  \]
  To lighten the notation, we often write $\Th_X(V)$ for $\Th_X(V)/X=\Sigma^\infty_\T(\Th_X(V)) \in \SH(X)$, when the context makes the meaning clear. For $\pi_X:X\to B$ in $\Sm_B$, we set $\Th(V):=\pi_{X\#}(\Th_X(V))$ in $\H_\bul(B)$, and we similarly write $\Th(V)$ for $\Th(V)/B \simeq \pi_{X\#}(\Th_X(V)/X)\in \SH(B)$ when appropriate.
  
  To see the relation between the Thom space $\Th_X(V)$ and the suspension functor $\Sigma^V$, consider the localization distinguished triangle
  \[
    j_!j^!\to \id_V\to s_*s^*\to j_!j^![1],
  \]
  where $s$ still denotes the zero section and $j$ denotes the open complement $V \setminus 0_V \inj V$. Evaluating the sequence at $1_V$ and noting that $j_!j^!=j_\#j^*$ and  $s_*=s_!$, we obtain an
  identification between $\Th_X(V)\in \SH(X)$ and $p_\#s_*(1_X)=\Sigma^V(1_X)$. Consequently, we have identifications $\pi_{V\#}(s_*(1_X))\simeq \pi_{X\#}(\Sigma^V(1_X))\simeq \Th(V)$.
  
  In parallel, we shall write $\Th_X(-V)$ for $\Sigma^{-V}(1_X)$ and $\Th(-V)$ for $\pi_{X\#}\Sigma^{-V}(1_X)$. With these notational conventions, there are canonical natural isomorphisms
  \begin{equation}
    \label{eq:SuspensionSmashing}
    \Sigma^V(-)\simeq \Th_X(V)\wedge_X(-), \quad
    \Sigma^{-V}(-)\simeq \Th_X(-V)\wedge_X(-).
  \end{equation}
\end{notation}

\begin{remark}
  Let $\D_\perf(X)_{\mathrm{iso}}$ denote the subcategory of isomorphisms in the perfect derived category $\D_\perf(X)$ and let $\sK(X)$ denote the groupoid associated to the K-theory space of $X$. Then the assignment $V \mapsto \Sigma^V$ extends to a functor
  \[
    \Sigma^{(-)} :\D_\perf(X)_{\mathrm{iso}} \to \Aut(\SH(X)),
  \]
  and moreover factors through the canonical functor $\D_\perf(X)_{\mathrm{iso}} \to \sK(X)$, so that a distinguished triangle $E'\to E\to E''\to E'[1]$ in $\D_\perf(X)$ determines a natural isomorphism $\Sigma^{E'}\circ\Sigma^{E''}\simeq \Sigma^E$.  See for example \cite[Proposition 4.1.1]{Riou} for a proof of this last statement in the special case concerning the functor $\Th_X(-)=\Sigma^{(-)}(1_X) : 
  \D_\perf(X)_{\mathrm{iso}} \to  \SH(X)$ (which in fact implies the general statement by \eqref{eq:SuspensionSmashing}).

  In this context, for an integer $n$, we sometimes write $n$ for the trivial bundle of virtual rank $n$; for example, $\Sigma^{n+E} \simeq \Sigma^n_{\P^1}\circ\Sigma^E$ for $E$ in $\D_\perf(X)$.
\end{remark}

\begin{remark}[Atiyah duality]
  \label{rmk:Atiyah}
  For $f:Y\to X$ a smooth morphism in $\Sch_B$ with relative tangent bundle $T_f\to Y$, there are canonical natural isomorphisms
  \[
    f_! \simeq f_\sharp \circ \Sigma^{-T_f}, \quad
    f^! \simeq \Sigma^{T_f} \circ f^*
  \]
  (see \cite[Theorem 6.18(2)]{Hoyois6}). In addition, for $X \in \Sch_B$ and $V\to X$ a vector bundle, there are canonical natural isomorphisms 
  \[
    f_\sharp\circ \Sigma^{\pm f^*V}\simeq \Sigma^{\pm V}\circ f_\sharp,\quad
    f^*\circ \Sigma^{\pm V}\simeq \Sigma^{\pm V}\circ f^*,
  \]
  the latter valid for an arbitrary morphism $f:Y\to X$ in $\Sm_B$. Moreover, for $f:Y\to X$ an arbitrary morphism in $\Sch_B$ and $V\to X$ a vector bundle, there is a canonical natural isomorphism
  \[
    f^!\circ\Sigma^{\pm V}\simeq \Sigma^{\pm f^*V}\circ f^!.
  \]
  (see the beginning of \cite[\S5.2]{Hoyois6}). Finally, for $f:Y\to X$ a regular embedding in $\Sch_B$ with normal bundle $N_f\to Y$, there is a canonical natural isomorphism
  \[
    f^!\simeq \Sigma^{-N_f}\circ f^*.
  \]

  In fact, the isomorphism $f^! \simeq \Sigma^{T_f} \circ f^*$ for smooth $f$ extends to the case of an lci morphism, as follows. For $f:Y\to X$ an lci morphism in 
  $\Sch_B$, we factor $f$ as $f=p\circ i$ with $i:Y\to Z$ a regular embedding and $p:Z\to X$ a smooth morphism (both in $\Sch_B$). This gives the relative virtual normal bundle $\nu_f:=[N_i]-[i^*T_{Z/X}]$ in $\sK(Y)$, independent up to canonical isomorphism on the choice of the factorization. Thus, we have the canonically defined suspension automorphism $\Sigma^{-\nu_f}$ and a canonical natural isomorphism
  \[
    f^!=i^!\circ p^!\simeq \Sigma^{-N_i}\circ i^*\circ \Sigma^{T_{Z/X}}\circ p^*\simeq
    \Sigma^{-\nu_f}\circ f^*.
  \]
  One can then construct a left adjoint $f_\sharp$ to $f^*$ by setting
  \[
    f_\sharp:=f_!\circ\Sigma^{\nu_f}.
  \]
  The functoriality $(gf)^*=f^*\circ g^*$ gives rise to the functoriality on the adjoints $(gf)_\sharp=g_\sharp\circ f_\sharp$ for composable lci morphisms.
\end{remark}
 
\begin{notation}
  Let $\pi : Z \to X$ be a morphism in $\Sch_B$. For $Y \in \H_\bul(Z)$, we set
  \[
    Y/X_\BM := \pi_!(\Sigma^\infty_\T Y) \in \SH(X),
  \]
  and for $Y \in \Sm_Z$, we make the abbreviation $Y/X_\BM$ for $Y_+/X_\BM = \pi_!(\Sigma^\infty_\T Y_+)$. In particular, we by definition have $Z/X_\BM = \pi_!(1_Z) \in \SH(X)$.

  Furthermore, for $i:W \inj Z$ the inclusion of a reduced closed subscheme, we set
  \[
    Z_W/X_\BM:=\pi_!(i_*(1_W)) .
  \]
  We observe two facts about this object. Firstly, letting $\pi'$ denote the composite $\pi \circ i : W\to X$, the isomorphism $\pi'_!=\pi_!\circ i_*$ determines an isomorphism $Z_W/X_\BM\simeq W/X_\BM$. Secondly, let $j:U:=Z\setminus W\to Z$ denote the open complement of $W$ and consider the localization distinguished triangle
  \[
    j_!j^!\to \id_{\SH(Z)}\to i_*i^*.
  \]
  Then the identities  $j^!= j^*$ and $i_*=i_!$ give a canonical distinguished triangle in $\SH(X)$,
  \[
    U/X_\BM\to Z/X_\BM\to Z_W/X_\BM\to U/X_\BM[1].
  \] 
\end{notation}

\begin{remark}
  \label{rmk:BMFunctor}
  The assignment $Z \mapsto Z/X_\BM$ extends to a functor
  \[
    (-)/X_\BM : (\Sch_X^\pr)^\op \to \SH(X).
  \]
  This is described in a number of places, for example \cite[\S 1]{LevISNC}; we recall the construction for the reader's convenience, referring to loc. cit. for details.

  Let $g : Z \to Y$ be a proper morphism in $\Sch_X$. Let $\pi_Y : Y \to X$ and $\pi_Z : Z \to X$ denote the structural morphisms. As mentioned in Notation~\ref{ntn:SH}, we have a natural isomorphism $\eta^g_{!*} : g_! \simeq g_*$. We may then define a natural transformation $g^* : \pi_{Y!} \to \pi_{Z!}g^*$ as the composition
  \[
    \pi_{Y!} \xrightarrow{u_g} \pi_{Y!}g_*g^* \xrightarrow{(\eta^g_{!*})^{-1}}
    \pi_{Y!}g_!g^* \simeq \pi_{Z!}g^*,
  \]
  where $u_g:\id_{\SH(Y)}\to g_*g^*$ is the unit of the adjunction. Evaluating this natural transformation at $1_Y$ gives a map $g^* = g/X_\BM : Y/X_\BM \to Z/X_\BM$, and it follows directly from the definitions that this construction satisfies $(gh)^*=h^*g^*$  for composable proper morphisms $g$, $h$. This establishes the claimed functoriality.
\end{remark}

\subsection{Duality for smooth projective schemes}

Let $(\sC, \otimes, 1, \mu, \tau)$ be a symmetric monoidal category. Recall that the the {\em dual} of an object $x$ of $\sC$ is a triple $(y, \delta, \epsilon)$ with $\delta_x:1\to x\otimes y$ and $\epsilon_x:y\otimes x\to 1$ maps such that the two compositions
\[
x\xr{\mu^{-1}}1\otimes x\xr{\delta_x\otimes\id_x}x\otimes y\otimes x\xr{\id_x\otimes\epsilon_x}x\otimes 1\xr{\mu}x
\]
and
\[
y\xr{\mu^{-1}} y\otimes 1\xr{\id_y\otimes\delta_x}y\otimes x\otimes y\xr{\epsilon_x\otimes \id_y}1\otimes y\xr{\mu} y
\]
are equal to the respective identity maps (this notion goes back to \cite{DoldPuppe}; see \cite{May} for details). In this subsection, we recall from \cite{Hoyois6} the construction of the dual of a smooth projective scheme in the stable motivic homotopy category.

\begin{remark}
  Let $(\sC, \otimes, 1, \mu, \tau)$ be a symmetric monoidal category and let $x \in \sC$. If a triple $(y,\delta,\epsilon)$ satisfying the above definition of the dual exists, then it is unique up to unique isomorphism. We often omit specific mention of $\delta$ and $\epsilon$ and denote the dual object $y$ by $x^\vee$.

  When $x$ admits a dual $(x^\vee,\delta,\epsilon)$, then it is immediate that $x^\vee$ is also dualizable, with dual $(x, \tau \circ \delta, \tau \circ \epsilon)$, so that $x$ is canonically isomorphic to $(x^\vee)^\vee$.

  Sending $x \mapsto x^\vee$ extends to a contravariant functor $(-)^\vee$ on the full subcategory of $\sC$ consisting of those objects $x$ that admit a dual with canonical isomorphism, and the above determines a natural isomorphism $((-)^\vee)^\vee \simeq \id$. For $f:x\to z$ a morphism of dualizable objects in $\sC$, the dual morphism $f^\vee:z^\vee\to x^\vee$ is the composition
  \[
    z^\vee\xr{\mu^{-1}}z^\vee\otimes 1\xr{\id_{z^\vee}\otimes \delta_x}z^\vee\otimes x\otimes x^\vee
    \xr{\id\otimes f\otimes\id}z^\vee\otimes z\otimes x^\vee\xr{\epsilon_z\otimes\id_{x^\vee}}1\otimes x^\vee\xr{\mu}x^\vee.
  \]
\end{remark}

\begin{lemma}
  \label{lem:DualProjFormula}
  Let $\pi_X : X \to B$ be an object of $\Sm_B$. View $X\times_BX$ as a $X$-scheme via the projection $p_2 : X\times_BX \to X$ onto the second factor. Then there are canonical isomorphisms
  \[
    \pi_X^*(X/B_\BM) \simeq    p_{2\sharp} \Sigma^{-p_1^* T_{X/B}}(1_{X\times_BX}) \simeq X\times_BX/X_\BM
  \]
  in $\SH(X)$ and a canonical isomorphism
  \[
    \pi_{X\sharp}(X\times_BX/X_\BM)\simeq X/B_\BM\wedge_BX/B
  \]
  in $\SH(B)$. 
\end{lemma}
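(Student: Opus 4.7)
The plan is to piece the three isomorphisms together using base change, Atiyah duality, and the projection formula, all of which were recalled in Remark~\ref{rmk:Atiyah} and Notation~\ref{ntn:SH}.

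\textbf{First isomorphism.} I would start by rewriting $X/B_\BM = \pi_{X!}(1_X)$, so that
$\pi_X^*(X/B_\BM) = \pi_X^*\pi_{X!}(1_X)$. Then I apply the proper base-change isomorphism to the Cartesian square with $p_1,p_2:X\times_BX\to X$ the two projections, yielding $\pi_X^*\pi_{X!}\simeq p_{2!}p_1^*$, so $\pi_X^*(X/B_\BM)\simeq p_{2!}(1_{X\times_BX})$. Since $p_2$ is the pullback of the smooth morphism $\pi_X$, it is smooth with relative tangent bundle $T_{p_2}\simeq p_1^*T_{X/B}$; Atiyah duality then gives $p_{2!}\simeq p_{2\sharp}\circ\Sigma^{-T_{p_2}}\simeq p_{2\sharp}\circ\Sigma^{-p_1^*T_{X/B}}$, which establishes the first claimed isomorphism.

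\textbf{Second isomorphism.} By definition, when $X\times_BX$ is viewed as an $X$-scheme via $p_2$, we have $X\times_BX/X_\BM = p_{2!}(1_{X\times_BX})$. The Atiyah duality identification used above thus already identifies the middle object $p_{2\sharp}\Sigma^{-p_1^*T_{X/B}}(1_{X\times_BX})$ with $p_{2!}(1_{X\times_BX}) = X\times_BX/X_\BM$.

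\textbf{Third isomorphism.} Here I would invoke the projection formula for the smooth morphism $\pi_X$: for objects $A\in\SH(X)$ and $C\in\SH(B)$, one has a canonical isomorphism $\pi_{X\sharp}(A)\wedge_B C\simeq \pi_{X\sharp}(A\wedge_X\pi_X^*C)$. Writing $X/B = \pi_{X\sharp}(1_X)$ and taking $A = 1_X$, $C = X/B_\BM$, together with the symmetry of $\wedge_B$, gives
\[
  X/B_\BM\wedge_B X/B\ \simeq\ \pi_{X\sharp}(1_X)\wedge_B X/B_\BM\ \simeq\ \pi_{X\sharp}\bigl(1_X\wedge_X\pi_X^*(X/B_\BM)\bigr)\ \simeq\ \pi_{X\sharp}\bigl(\pi_X^*(X/B_\BM)\bigr).
\]
Applying the first isomorphism to $\pi_X^*(X/B_\BM)$ inside then yields $\pi_{X\sharp}(X\times_BX/X_\BM)$, as desired.

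\textbf{Expected obstacle.} None of the three steps is deep; the only points requiring care are the correct invocation of proper base change (using that $\pi_X$ is smooth and separated, so that $\pi_X^*\pi_{X!}\simeq p_{2!}p_1^*$ holds canonically), the identification $T_{p_2}\simeq p_1^*T_{X/B}$ of relative tangent bundles under base change, and the fact that the projection-formula and base-change isomorphisms are natural enough that the resulting composite isomorphism is independent of the chosen presentation. The mildly annoying bookkeeping will be verifying that the unnamed ``canonical'' isomorphisms produced at each step agree with those implicitly fixed by the six-functor formalism recalled in Notation~\ref{ntn:SH} and Remark~\ref{rmk:Atiyah}; this is entirely formal but must be checked in order to justify the word ``canonical'' in the statement.
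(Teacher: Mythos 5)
Your proof is correct and uses essentially the same ingredients as the paper's (base change, Atiyah duality for the smooth morphisms $\pi_X$ and $p_2$, and the projection formula), just assembled in a slightly different order: for the first two isomorphisms the paper applies Atiyah duality to $\pi_{X!}$ first and then smooth base change for $\pi_{X\sharp}$, whereas you apply base change for $\pi_{X!}$ first and then Atiyah duality for $p_2$, which lands you at $p_{2!}(1_{X\times_BX})=X\times_BX/X_\BM$ directly. For the third isomorphism your route is a mild but genuine shortcut: you deduce it from the first isomorphism via the $\sharp$-projection formula for $\pi_X$, while the paper re-derives it independently through the longer chain using $\pi_{X\sharp}p_{2\sharp}\simeq\pi_{X\sharp}p_{1\sharp}$, the exchange $p_{1\sharp}p_2^*\simeq\pi_X^*\pi_{X\sharp}$, and the $!$-projection formula $\pi_{X!}(1_X\wedge_X\pi_X^*(X/B))\simeq\pi_{X!}(1_X)\wedge_B(X/B)$; both are legitimate, and yours trades the extra base-change step for the smooth projection formula.
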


\begin{proof}
  Consider the commutative square
  \[
    \xymatrix{
      X\times_BX\ar[r]^-{p_2}\ar[d]_{p_1}&X\ar[d]^{\pi_X}\\
      X\ar[r]_-{\pi_X}&B
    }
  \]
  This gives us the canonical isomorphism
  \[
    T_{X\times_BX/X}\simeq p_1^*T_{X/B}.
  \]
  We have the identities and canonical isomorphisms
  \begin{multline*}
    \pi_X^*(X/B_\BM)=\pi_X^*\pi_{X!}(1_X)\simeq \pi_X^*\pi_{X\sharp}\Sigma^{-T_{X/B}}(1_X)
    \iso^{\text{(base change)}} p_{2\sharp}p_1^*\Sigma^{-T_{X/B}}(1_X)\\\simeq p_{2\sharp}\Sigma^{-p_1^*T_{X/B}}(p_1^*1_X)=p_{2\sharp}\Sigma^{-p_1^*T_{X/B}}(1_{X\times_BX})
  \end{multline*}
  which gives us the first isomorphism in $\SH(X)$. The second follows from
  \[
    p_{2\sharp} \Sigma^{-p_1^* T_{X/B}}(1_{X\times_BX})\simeq
    p_{2\sharp} \Sigma^{- T_{X\times_BX/X}}(1_{X\times_BX})
    \simeq p_{2!}(1_{X\times_BX})=X\times_BX/X_\BM.
  \]
  Finally, to give the isomorphism in $\SH(B)$, we have
  \begin{multline*}
    \pi_{X\sharp}(X\times_BX/X_\BM)\simeq
    \pi_{X\sharp}(p_{2\sharp}(\Sigma^{-T_{X\times_BX/X}}(1_{X\times_BX}))\simeq \pi_{X\sharp}p_{1\sharp}(\Sigma^{-T_{X\times_BX/X}}(1_{X\times_BX}))\\\simeq
    \pi_{X\sharp}p_{1\sharp}(\Sigma^{-p_1^*T_{X/B}}(1_{X\times_BX}))\simeq
    \pi_{X\sharp}\Sigma^{-T_{X/B}}p_{1\sharp}(1_{X\times_BX})\simeq
    \pi_{X!}p_{1\sharp}p_2^*(1_X)\\\iso^{\text{(base change)}}
    \pi_{X!}\pi_X^*\pi_{X\sharp}(1_X)\simeq \pi_{X!}(1_X\wedge_X\pi_X^*(X/B))\\\iso^{\text{(projection formula)}}\pi_{X!}(1_X)\wedge_B (X/B)=X/B_\BM\wedge_B X/B.
  \end{multline*}
  The  base change isomorphisms follow from  \cite[Theorem 6.18(3)]{Hoyois6} and the projection formula is \cite[Theorem 6.18(7)]{Hoyois6}.
\end{proof}

\begin{construction}\label{constr:SWDual}
  Let $\pi_X : X \to B$ be an object of $\Sm_B$ that is proper (i.e. a smooth projective scheme over $B$). We recall (from Hoyois \cite{Hoyois6}, but see also the constructions of Riou \cite{RiouSWDual} and Ayoub \cite{Ayoub}) the construction of a duality between the objects $X/B$ and $X/B_\BM$ in $\SH(B)$.

  We first construct the coevaluation map $\delta_{X/B} : 1_B \to X/B\wedge_BX/B_\BM$. Applying the functoriality of $(-)/B_\BM$ from Remark~\ref{rmk:BMFunctor} to the proper map $\pi_X$ gives the map
  \[
    \pi_X^* : 1_B = B/B_\BM \to X/B_\BM = \pi_{X!}(1_X) \simeq \pi_{X\sharp}(\Sigma^{-T_{X/B}}(1_X))
  \]
  in $\SH(B)$, and the diagonal $\Delta_{X/B} : X \to X\times_BX$ induces via the functoriality of  $(-)/X:\Sm_X\to \SH(X)$   the map
  \[
    \Delta_{X/B*}:=\Delta_{X/B}/X : 1_X = X/X \to X\times_BX/X
  \]
  in $\SH(X)$. We may put together these two maps to obtain the composition
  \[
    1_B \xr{\pi_X^*} \pi_{X\sharp}(\Sigma^{-T_{X/B}}(1_X)) \xr{\pi_{X\sharp}\Sigma^{-T_{X/B}}(\Delta_{X/B*})} \pi_{X\sharp}\Sigma^{-T_{X/B}} (X\times_BX/X),
  \]
  Using the identification $\pi_{X\sharp}\Sigma^{-T_{X/B}}(X\times_BX/X) \simeq X/B\wedge_BX/B_\BM$ from Lemma~\ref{lem:DualProjFormula}, this gives the desired map $\delta_{X/B}$.

  We now construct the evaluation map $\epsilon_{X/B} : X/B_\BM \wedge_B X/B \to 1_B$ (which does not require properness of $\pi_X$). Here we apply the functoriality of $(-)/X_\BM$ to the proper map $\Delta_{X/B}$ to obtain the map
  \[
    \Delta_{X/B}^* : X\times_BX/X_\BM \to X/X_\BM = 1_X
  \]
  in $\SH(X)$, and the functoriality of $(-)/B$ to the map $\pi_X$ to obtain the map
  \[
    \pi_{X*} : X/B \to B/B = 1_B.
  \]
  Putting these together yields the composition
  \[
    \pi_{X\sharp}(X\times_BX/X_\BM)
    \xr{\pi_{X/B\sharp}(\Delta^*)} \pi_{X/B\sharp}(1_X) =X/B \xr{\pi_{X*}} 1_B.
  \]
  Now using the identification $X/B_\BM\wedge_BX/B \simeq
  \pi_{X\sharp}(X\times_BX/X_\BM)$ from Lemma~\ref{lem:DualProjFormula}, we get the desired map $\epsilon_{X/B}$.

  It is shown in \cite[Corollary 6.13]{Hoyois6} that the triple $(X/B_\BM, \delta_{X/B}, \epsilon_{X/B})$ is the dual of $X/B$ in $\SH(B)$.  Using our notation for Thom spaces, we have
  \[
    X/B_\BM=\pi_{X!}(1_X)\simeq \pi_{X\#}\circ \Sigma^{-T_{X/B}}(1_X)=\Th_X(-T_{X/B})/B
  \]
  the coevaluation map is
  \[
    \delta_{X/B}:1_B\to X/B\wedge_B\Th_X(-T_{X/B})/B
  \]
  and the evaluation map is
  \[
    \epsilon_{X/B}:\Th_X(-T_{X/B})/B\wedge_BX/B\to  1_B
  \]
\end{construction}

The dualizability of $X/B$ as above allows one to define an Euler characteristic for smooth projective schemes:

\begin{definition}
  For $X \to B$ in $\Sm_B$ and proper, the {\em Euler characteristic}  $\chi(X/B)\in \End_{\SH(B)}(1_B)$ is the composition
  \[
    1_B\xr{\delta_{X/B}}X/B\wedge_B\Th_X(-T_{X/B})/B\xr{\tau}\Th_X(-T_{X/B})/B\wedge_BX/B\xr{\epsilon_{X/B}} 1_B,
  \]
  where $\tau$ denotes the symmetry isomorphism, and $\delta_{X/B}$ and $\epsilon_{X/B}$ are as in Construction~\ref{constr:SWDual}.
\end{definition}
 
To finish this section, we give an alternative characterization of the Euler characteristic $\chi(X/B)$ just defined (Lemma~\ref{lem:simp} below). We proved this result in the case $B=\Spec k$ for $k$ a field in \cite{LevEnum}; the proof in this more general setting is exactly the same.

\begin{construction}\label{const:beta}
We consider $X\times_BX$ as a smooth $X$-scheme via the projection $p_2$. The diagonal 
 $\Delta_X:X\to X\times_BX$ gives a section to $p_2$. Let $q:X\times_BX\to X\times_BX/(X\times_BX\setminus \Delta_X(X))$ be the 	quotient map. We have the Morel-Voevodsky purity isomorphism \cite[Theorem 3.2.23]{MorelVoev}, which gives the isomorphism in $\H_\bul(X)$
  \[
 mv_\Delta:X\times_BX/(X\times_BX\setminus \Delta_X(X))\xr{\sim}\Th_X(N_{\Delta_X})
  \]
 Composing the 0-section $s_N:X\to N_{\Delta_X} $ with the quotient map $N_{\Delta_X}\to \Th(N_{\Delta_X})$  defines $\bar{s}_N:X\to \Th_X(N_{\Delta_X})$. 
It follows from the proof of the purity isomorphism that we have the commutative diagram in $\H_\bul(X)$
 \begin{equation}\label{eqn:beta}
 \xymatrix{
 X\times_BX_+\ar[r]^-q& X\times_BX/(X\times_BX\setminus \Delta_X(X))\ar[r]^-{mv_\Delta}&\Th_X(N_{\Delta_X})\\
 &X_+\ar[u]^{q\circ \Delta_X}\ar[ur]_{\bar{s}_N}\ar[ul]^{\Delta_X}
 }
 \end{equation}
 Finally, we have  the isomorphism $N_{\Delta_X}\simeq T_{X/B}$ of vector bundles over $X$  furnished by the composition
\[
    T_{X/B}\xr{\sim}\Delta_X^*p_2^*T_{X/B}\xr{i_2}\Delta_X^*(p_1^*T_{X/B}\oplus p_2^*T_{X/B})\simeq
    \Delta_X^*T_{X\times_BX/B}\xr{\pi}N_{\Delta_X}
\]  
where $\pi: \Delta_X^*T_{X\times_BX/B}\to N_{\Delta_X}$ is the canonical projection. Putting these maps together gives us the composition in $\H_\bul(X)$
\[
X_+\xr{\Delta_X}X\times_BX\xr{q}X\times_BX/(X\times_BX\setminus \Delta_X(X))\xr{mv_\Delta}
\Th_X(N_{\Delta_X})\xr{\sim} \Th_X(T_{X/B});
\]
the commutativity of the diagram \eqref{const:beta} shows that this composition is equal to the map $\bar{s}_{T_{X/B}}:X_+\to \Th_X(T_{X/B})$, induced as for $\bar{s}_{N_{\Delta_X}}$ by the zero-section $s_{T_{X/B}}:X\to T_{X/B}$.

Applying $\Sigma^{-T_{X/B}}\Sigma^\infty_{\P^1}(-)$ and the isomorphism
\[
\Sigma^{-T_{X/B}}\Th(T_{X/B})\simeq \Sigma^{-T_{X/B}}\circ\Sigma^{T_{X/B}}(1_X)\simeq 1_X
\]
 gives the  morphism
\[
\tilde{\beta}_{X/B}:\Sigma^{-T_{X/B}}(1_X)\to 1_X
\]
in $\SH(X)$. Finally, applying $\pi_{X\#}$ gives the morphism
\[
\beta_{X/B}:\Th(-T_{X/B})\to X/B
\]
in $\SH(B)$.

Summarizing our construction, $\beta_{X/B}$ is given by applying $\pi_{X\#}$ to the composition in $\SH(X)$
\begin{multline}\label{const:betaComp1}
\Sigma^{-T_{X/B}}(1_X)\xr{\Sigma^{-T_{X/B}}(\Delta_X)}
\Sigma^{-T_{X/B}}(X\times_BX/X)\xr{\Sigma^{-T_{X/B}}(q)}\\
\Sigma^{-T_{X/B}}([X\times_BX/(X\times_BX\setminus \Delta_X)]/X)
\xr{\Sigma^{-T_{X/B}}(mv_{\Delta_X})}\\
\Sigma^{-T_{X/B}}(\Th_X(N_{\Delta_X}))\simeq
\Sigma^{-T_{X/B}}(\Th_X(T_{X/B}))\\\simeq
\Sigma^{-T_{X/B}}\circ\Sigma^{T_{X/B}}(1_X)\simeq
1_X
\end{multline}
or equivalently, to the composition
\begin{equation}\label{const:betaComp2}
\Sigma^{-T_{X/B}}(1_X)\xr{\Sigma^{-T_{X/B}}(\bar{s}_{T_{X/B}})}
\Sigma^{-T_{X/B}}(\Th_X(T_{X/B}))\simeq
\Sigma^{-T_{X/B}}\circ\Sigma^{T_{X/B}}(1_X)\simeq
1_X
\end{equation}

\end{construction}

\begin{lemma}\label{lem:simp} For $X$ smooth and proper over $B$, $\chi(X/B)$ is equal to the composition
\[
1_B \xrightarrow{\pi_X^*}\Th(-T_{X/B})
\xrightarrow{\beta_{X/B}}X/B\xrightarrow{
\pi_{X*}}1_B
\]
\end{lemma}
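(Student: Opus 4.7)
The plan is to unpack both sides of the claimed equality and match them via a short diagram chase. Recall from Construction~\ref{constr:SWDual} that the coevaluation $\delta_{X/B}$ is built by first applying $\pi_X^*: 1_B \to X/B_\BM \simeq \Th(-T_{X/B})$ and then the diagonal $\Delta_{X/B*}$, while the evaluation $\epsilon_{X/B}$ is built by first applying the diagonal $\Delta_{X/B}^*$ and then $\pi_{X*}: X/B \to 1_B$. Hence the composition $\chi(X/B) = \epsilon_{X/B} \circ \tau \circ \delta_{X/B}$ automatically factors as
\[
1_B \xrightarrow{\pi_X^*} \Th(-T_{X/B}) \xrightarrow{\alpha} X/B \xrightarrow{\pi_{X*}} 1_B,
\]
where $\alpha$ is an intermediate arrow, and it remains to identify $\alpha$ with $\beta_{X/B}$.

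First I would extract $\alpha$ explicitly. Unwinding Construction~\ref{constr:SWDual} and the isomorphisms of Lemma~\ref{lem:DualProjFormula}, one sees that $\alpha$ is obtained by applying $\pi_{X\sharp}$ to a composite in $\SH(X)$ of the form
\[
\Sigma^{-T_{X/B}}(1_X) \xrightarrow{\Sigma^{-T_{X/B}}(\Delta_{X/B*})} \Sigma^{-T_{X/B}}(X\times_B X/X) \xrightarrow{\gamma} 1_X,
\]
where $\gamma$ encodes the proper pushforward $\Delta_{X/B}^*$ threaded through the symmetry $\tau$ and the base-change and projection-formula isomorphisms of Lemma~\ref{lem:DualProjFormula}. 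The next step is to match this with Construction~\ref{const:beta}.

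The key geometric input is that the composition ``push along $\Delta_{X/B}$ and then pull back along $\Delta_{X/B}$'' is controlled by Morel--Voevodsky purity: factoring through the quotient $X\times_B X/(X\times_B X\setminus \Delta_X(X))$ and applying $mv_{\Delta}$ identifies this composite with the zero-section map to $\Th_X(N_{\Delta_X})$, which in turn is identified with $\Th_X(T_{X/B})$ via the canonical isomorphism $N_{\Delta_X} \simeq T_{X/B}$. This is exactly the two-step description of $\tilde\beta_{X/B}$ appearing in \eqref{const:betaComp1} and \eqref{const:betaComp2}, so after applying $\pi_{X\sharp}$ we get $\alpha = \beta_{X/B}$.

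The main obstacle is the bookkeeping required to verify that the composite ``$\Delta_{X/B}^* \circ \tau \circ \Delta_{X/B*}$'', together with all of the base-change and projection-formula identifications of Lemma~\ref{lem:DualProjFormula}, agrees with the purity-based construction of $\beta_{X/B}$; this is a careful diagram chase in the six-functor formalism but involves no new geometric input. As the paper notes, the case $B = \Spec k$ was treated in \cite{LevEnum}, and the argument transfers verbatim to an arbitrary noetherian separated base $B$ since Lemma~\ref{lem:DualProjFormula}, Morel--Voevodsky purity, and the identification $N_{\Delta_X} \simeq T_{X/B}$ are all available in the relative setting $\Sm_B$.
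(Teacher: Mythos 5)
Your proposal is correct and follows essentially the same route as the paper's proof: you factor $\chi(X/B)$ as $\pi_{X*}\circ\alpha\circ\pi_X^*$ directly from the way $\delta_{X/B}$ and $\epsilon_{X/B}$ are assembled from $\pi_X^*$, $\Delta_{X/B*}$, $\Delta_{X/B}^*$ and $\pi_{X*}$, and then identify the middle arrow with $\beta_{X/B}$ by unwinding the evaluation at the diagonal via Morel--Voevodsky purity, exactly as in \eqref{eqn:beta} and \eqref{const:betaComp2}. The one point your ``bookkeeping'' should make explicit -- it is the middle square of the paper's concluding diagram -- is how the symmetry $\tau$ disappears: the swap of the two factors of $X\times_BX$ fixes the diagonal and interchanges the twists $-p_1^*T_{X/B}$ and $-p_2^*T_{X/B}$, so $\tau\circ\Th(\Delta_X)=\Th(\Delta_X)$, after which the remaining composite is precisely $\pi_{X\sharp}$ applied to the purity description of $\beta_{X/B}$.
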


\begin{proof} Let $p_i:X\times_BX\to X$, $i=1,2$, be the projections. The map 
\[
\ev_X:X/B^\vee\wedge_BX/B\to 1_B
\]
is the composition
\begin{multline*}
X/B^\vee\wedge_B X/B=
\Th(-T_{X/k})\wedge_BX/B\\=
\Th(-p_1^*T_{X/k})\xr{q} \Th(-p_1^*T_{X/B})/  \Th(-j^*p_1^*T_{X/B})\simeq 
\Th( -T_{X/B}\oplus N_{\Delta_X})\\\simeq 
 \Th(-T_{X/B}\oplus T_{X/B})\simeq X/B\xr{\pi_{X}} 1_B.
 \end{multline*}
From our description of $\beta_{X/B}$ as $\pi_{X\#}$ applied to the composition \eqref{const:betaComp1}, we see that $\pi_{X*}\circ \beta_{X/B}=\ev_{X/B}\circ \Th(\Delta_X)$. Also, $\pi_X^*=\pi_X^\vee$ and $\pi_X^\vee$ is given by
\[
1_B\xr{\delta_{X/B}}X/B\wedge_B X/B^\vee
=\Th(-p_2^*T_{X/B})\xr{p_2}\Th(-T_{X/B})
\]

It follows from the construction of the map $\delta_{X/B}$  described above that 
\[
\delta_{X/B}=\Th(\Delta_X)\circ \pi_X^*.
\]

This gives us the commutative diagram
\[
\xymatrixcolsep{40pt}
\xymatrix{
&X/B\wedge_BX/B^\vee\ar[r]^{\tau_{X, X^\vee}}\ar@{=}[d]&X/B^\vee\wedge_BX/B\ar[dr]^{\ev_X}\\
1_B\ar[ur]^-{\delta_{X/B}}\ar[dr]_{\pi_X^*}&
\Th(-p_2^*T_{X/B})\ar[r]^{\tau_{X, X^\vee}}&\Th(-p_1^*T_{X/B})\ar@{=}[u]
&1_B\\
&\Th(-T_{X/B})\ar[ur]_-{\Th(\Delta_X)}\ar[u]_{\Th(\Delta_X)} \ar[r]_{\beta_{X/B}}
&X/B\ar[ur]_{\pi_{X*}} 
}
\]
\end{proof}

\section{$\SL$-oriented motivic spectra}
\label{sec:SL}

In this section, we discuss the definition and basic features of $\SL$-oriented motivic spectra. This is treated by Ananyevskiy in \cite{AnanSL} in the context of the motivic stable homotopy category $\SH(k)$ for $k$ a field. Essentially all of the constructions in op. cit. go through without change in the setting of a separated noetherian base-scheme $B$ of finite Krull dimension; the most one needs to do is replace a few arguments that rely on Jouanolou covers with some properties coming out of the six-functor formalism. We will recall and suitably extend Ananyevskiy's treatment here without any claim of originality.

\begin{definition}
  A \emph{motivic commutative ring spectrum} in $\SH(B)$ is a triple $(\sE, \mu, u)$ with $\sE$ in $\SH(B)$, and $\mu:\sE\wedge_B\sE\to \sE$, $u:1_B\to \sE$ morphisms in $\SH(B)$, defining a commutative monoid object in the symmetric monoidal category $\SH(B)$. We usually drop the explicit mention of the multiplication $\mu$ and unit $u$ unless these are needed.
\end{definition}

\begin{notation}
  Recall that, given a motivic spectrum $\sE \in \SH(B)$, we have a notion of \emph{$\sE$-cohomology} $\sE^{**}(X)$ for $X \in \H(B)$: this is the bigraded abelian group defined by
  \[
    \sE^{a,b}(X) := \Hom_{\SH(B)}(\Sigma^\infty_\T X_+, \rS^{a,b} \wedge \sE)
  \]
  for $a,b \in \Z$, where $\rS^{a,b} \in \SH(B)$ denotes the usual bigraded stable motivic sphere. This of course specializes to give the $\sE$-cohomology of objects $X \in \Sm_B$. It also specializes to give \emph{$\sE$-cohomology with supports} $\sE^{**}_Z(X)$ for $X \in \Sm_B$ and $Z \subseteq X$ a closed subset: namely, we define
  \[
    \sE^{a,b}_Z(X):=\sE^{a,b}(X/(X\setminus Z)).
  \]
  For instance, if $V\to X$ is a vector bundle, then $\sE^{a,b}_{0_V}(V)=\sE^{a,b}(\Th(V))$, where $0_V\subset V$ is the image of the zero-section. We further define
  \[
    \sE^{a,b}_Z(\Th(V)) := \sE^{a,b}_{0_V \cap q^{-1}(Z)}(V)
  \]
  for $Z \subseteq X$ a closed subset and $q : V \to X$ a vector bundle.

  A commutative ring spectrum structure on $\sE$ determines a natural cup product structure on $\sE$-cohomology and $\sE$-cohomology with supports in the usual manner; we refrain from spelling it out in detail here.
\end{notation}

\begin{notation}
  For a scheme $X$ and a rank $r$ vector bundle $V\to X$, we let $\det V\to X$ denote the determinant line bundle, defined by $\det V := \Lambda^rV$. Given two vector bundles $V_1\to X$ and $V_2\to X$, we have a canonical isomorphism
  \[
    \alpha_{V_1, V_2}:\det(V_1\oplus V_2)\to (\det V_1) \otimes_{\sO_X} (\det V_2),
  \]
  characterized by requiring that, for a local basis of sections $s_1^1,\ldots, s_n^1$ of $V_1$ and $s_1^2,\ldots, s_m^2$ of $V_2$, we have
  \[
    \alpha_{V_1, V_2}(((s_1^1,0)\wedge\ldots \wedge(s_n^1,0)\wedge(0, s_1^2)\wedge\ldots\wedge(0, s_m^2)))= (s_1^1\wedge\ldots\wedge s_n^1)\otimes(s_1^2\wedge\ldots\wedge s_m^2).
  \]
  This extends to a canonical natural isomorphism
  \[
    \alpha_E:\det V \to (\det V_1) \otimes_{\sO_X} (\det V_2)
  \]
  for each exact sequence $E$ of vector bundles on $X$,
  \[
    0\to V_1\to V\to V_2\to 0;
  \]
  one can define $\alpha_E$ by choosing local splittings and noting that the resulting isomorphism is independent of the choice of splitting.
\end{notation}

\begin{definition}
  \label{def:SLOr}
  An {\em $\SL$-orientation} of a motivic commutative ring spectrum $\sE$ in $\SH(B)$ is an assignment of elements $\th_{V,\theta}\in \sE^{2r,r}(\Th(V))$ for each pair $(V,\theta)$ consisting of a rank $r$ vector bundle $V\to X$ (for any $r \ge 0$) with $X\in \Sm_B$ and  an isomorphism $\theta:\det V\to \sO_X$ of line bundles, satisfying the following axioms:
  \begin{enumerate}
  \item \emph{Naturality}: Let $(V \to X, \theta : \det V \to \sO_X)$ be as above and let $f:Y\to X$ be a morphism in $\Sm_B$. Consider the vector bundle $f^*V\to Y$ and isomorphism $f^*\theta:\det f^*V\simeq f^*\det V\to \sO_Y$. Then we have $f^*(\th_{V,\theta})=\th_{f^*V, f^*\theta}$.
  \item \emph{Products}: Let $(V_1\to X,\theta_1:\det V_1 \to \sO_X)$ and $(V_2\to X,\theta_2:\det V_2 \to \sO_X)$ be two pairs as above. Consider the vector bundle $V_1 \oplus V_2 \to X$ and the isomorphism $\theta_1\wedge\theta_2:\det(V_1\oplus V_2)\to \sO_X$ defined by
    \[
      \theta_1\wedge\theta_2 := (\theta_1\otimes\theta_2)\circ\alpha_{V_1, V_2}.
    \]
    Then $\th_{V_1\oplus V_2, \theta_1\wedge\theta_2}=\th_{V_1,\theta_1}\cup\th_{V_2,\theta_2}$.
  \item \emph{Normalization}: For $X \in \Sm_B$, consider the trivial vector bundle $V=\sO_X$ and the identity isomorphism $\theta : V \to \sO_X$. Then, under the canonical identification $\Th(V) \simeq \Sigma_\T X_+$, the element $\th_{V,\theta}\in \sE^{2,1}(\Th(V))$ is the image of the unit $1 \in \sE^{0,0}(B)$ under the composition
    \[
      \sE^{0,0}(B)\xrightarrow{\pi_X^*}\sE^{0,0}(X)\iso^{\text{suspension}}\sE^{2,1}(\Sigma_\T X_+).
    \]
  \end{enumerate}

  An {\em $\SL$-oriented motivic spectrum} is a pair $(\sE, \th_{(-)})$ with $\sE$ a motivic commutative ring spectrum and $\th_{(-)}$ an $\SL$-orientation on $\sE$.
\end{definition}

\begin{variant}
  A {\em $\GL$-orientation}, or simply {\em orientation}, on a motivic commutative ring spectrum $\sE$ is an assignment $(V\to X)\mapsto \th_V\in \sE^{2r, r}(\Th(V))$, where $V\to X$ is a rank $r$ vector bundle (for any $r \ge 0$) on $X\in \Sm_B$, satisfying the evident modifications of the axioms (i)--(iii) in Definition~\ref{def:SLOr}, i.e. omitting the conditions on the determinant line bundle. The pair $(\sE, \th_{(-)})$ is   called a {\em $\GL$-oriented motivic spectrum}, or more simply, an \emph{oriented motivic spectrum}.
\end{variant}

For the remainder of the section, we fix an $\SL$-oriented motivic spectrum $\sE$. Let us first observe that the $\SL$-orientation determines Thom isomorphisms in $\sE$-cohomology for oriented vector bundles:

\begin{lemma}
  \label{lem:ThomIso1}
  Let $(\sE, \th_{(-)})$ be an $\SL$-oriented motivic spectrum and let $q:V\to X$ be a rank $r$ vector bundle on $X \in \Sm_B$ with an isomorphism $\theta:\det V\to \sO_X$. Then sending $x\in \sE^{a, b}_Z(X)$ to  $q^*(x)\cup\th_{V,\theta}$ defines an isomorphism
  \[
    \vartheta_{V,\theta}:\sE^{a, b}_Z(X)\to \sE^{a+2r, b+r}_Z(\Th(V))
  \]
  natural in $(X, V, \theta)$. 
\end{lemma}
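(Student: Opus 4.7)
The plan is a standard ``reduce and compute'' argument, using naturality of the Thom class and five-lemma arguments to reduce successively to a trivial rank-one bundle with arbitrary trivialization, where an explicit calculation via the products axiom finishes the proof. Naturality of $\vartheta_{V,\theta}$ in $(X, V, \theta)$ will follow directly from the naturality axiom of the $\SL$-orientation.

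The first reduction is to eliminate supports. The open immersion $j \colon X \setminus Z \hookrightarrow X$ gives a cofiber sequence $(X \setminus Z)_+ \to X_+ \to X/(X \setminus Z)$ in $\H_\bul(X)$, and restricting $V$ yields an analogous sequence of Thom spaces. Both produce long exact sequences in $\sE$-cohomology linked by cup with $\th_{V,\theta}$ (by the naturality axiom), and the five-lemma reduces the claim to the case $Z = X$. The second reduction is via Mayer-Vietoris: choosing a Zariski cover of $X$ on which $V$ trivializes and applying Mayer-Vietoris distinguished triangles to both $X_+$ and $\Th(V)$, compatible under cup with $\th_{V,\theta}$ by naturality, a five-lemma induction on the cover size reduces to the case where $V$ itself is trivial. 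The third reduction handles the rank: with $V = \sO_X^r$, any trivialization $\theta$ of $\det V = \sO_X$ is multiplication by some unit $u \in \sO_X(X)^\times$, and the products axiom applied to the decomposition $\sO_X^r = \sO_X \oplus \sO_X^{r-1}$ with trivializations $u\cdot\id$ and the standard $\alpha_0$ (whose $\wedge$-product is $\theta$) expresses $\th_{V,\theta} = \th_{\sO_X, u\cdot\id} \cup \th_{\sO_X^{r-1}, \alpha_0}$; iterated products and normalization identify $\th_{\sO_X^{r-1}, \alpha_0}$ with an $(r-1)$-fold $\T$-suspension of $1 \in \sE^{0,0}(X)$, whose associated cup-product is visibly an isomorphism. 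We are thus reduced to the rank-one case.

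In this base case $V = \sO_X$ with trivialization $u \cdot \id$: when $u = 1$ the normalization axiom identifies $\th_{V, \id}$ with the $\T$-suspension of $1$, making $\vartheta_{V, \id}$ the evident $\T$-suspension isomorphism. For general $u$, I would apply the products axiom to $\sO_X^2 = \sO_X \oplus \sO_X$ with trivializations $u\cdot\id$ and $u^{-1}\cdot\id$, whose $\wedge$-product is $(u \cdot u^{-1}) \alpha_0 = \alpha_0$; hence
\[
  \th_{\sO_X, u\cdot\id} \cup \th_{\sO_X, u^{-1}\cdot\id} = \th_{\sO_X^2, \alpha_0},
\]
whose cup-product action is a Thom isomorphism by the $u = 1$ case already handled. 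Using associativity of cup products, this shows the composition ``cup with $\th_{\sO_X, u\cdot\id}$ followed by cup with $\th_{\sO_X, u^{-1}\cdot\id}$'' is an isomorphism, and by swapping $u$ with $u^{-1}$ the opposite composition is too. This forces each of these two cup-product maps to be both injective and surjective, hence an isomorphism.

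The main obstacle is the last step: the $\SL$-orientation axioms leave $\th_{\sO_X, u \cdot \id}$ essentially unconstrained when $u \neq 1$, so one cannot hope to identify this class with a suspension of the unit in any direct manner. The crucial trick is that pairing it against $\th_{\sO_X, u^{-1} \cdot \id}$ via the products axiom collapses the $\wedge$-product of trivializations back to $\alpha_0$, reducing the question to the already-handled standard case on $\sO_X^2$.
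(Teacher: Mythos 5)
Your proof is correct and follows essentially the same route as the paper's: naturality comes from axiom (i) of the orientation, the trivial bundle with its standard trivialization is identified with the suspension isomorphism via normalization and products, and a Mayer--Vietoris argument over a trivializing cover globalizes the statement (the paper keeps supports throughout, so your preliminary reduction eliminating supports is harmless but not needed). Your only real addition is the $u$/$u^{-1}$ pairing trick for a unit-twisted trivialization of the trivial bundle, a point the paper passes over by implicitly changing the local frame so that the restricted trivialization becomes the standard one; your version makes that step explicit without appealing to invariance of Thom classes under bundle isomorphisms.
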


\begin{proof}
  The naturality of the maps $\vartheta_{V,\theta}$ follows from the naturality of the Thom classes, i.e. property (i) in their definition. 

  It follows from properties (i)--(iii) of the Thom class that for $V=\oplus_{i=1}^r\sO_Xe_i$ and $\theta:\det V\to \sO_X$ the canonical isomorphism given by $\theta(e_1\wedge\ldots\wedge e_n)=1$, the map $\vartheta_{V,\theta}$  is the suspension isomorphism
  \[
    \sE^{a, b}_Z(X)\simeq \sE^{a+2r, b+r}(\Sigma^r_TX_+/\Sigma^r_T(X\setminus Z)_+)\simeq
    \sE^{a+2r, b+r}_Z(\Th(V)).
  \]
  The naturality of the maps $\vartheta_{V,\theta}$ allow one to use a Mayer-Vietoris sequence for a trivializing open cover of $X$ for $V$ to show that $\vartheta_{V,\theta}$  is an isomorphism in general.
\end{proof}

The above Thom isomorphism may be extended to vector bundles without orientation by introducing \emph{twists} to $\sE$-cohomology, as follows.

\begin{definition}
  \label{def:Twist}
  For $X\in \Sm_B$ and $q:L\to X$ a line bundle, we define the \emph{twisted $\sE$-cohomology} $\sE^{**}(X;L)$ by
  \[
    \sE^{a,b}(X;L):=\sE^{a+2, b+1}(\Th(L))=\sE^{a+2, b+1}_{0_L}(L).
  \]
  We also have a version with supports: given in addition a closed subset $Z \subseteq X$, we define
  \[
    \sE^{a,b}_Z(X;L):=\sE^{a+2, b+1}_{0_L\cap q^{-1}(Z)}(L).
  \]
  Finally, when we also have a vector bundle $V \to X$, we similarly define
  \[
    \sE^{a,b}(\Th(V);L) := \sE^{a,b}_{0_V}(V;q^*L), \quad
    \sE^{a,b}_Z(\Th(V);L) := \sE^{a,b}_{0_V\cap q^{-1}(Z)}(V;q^*L);
  \]
  these definitions may be rewritten a bit, e.g. for the former we have
  \[
    \sE^{a,b}(\Th(V);L) = \sE^{a,b}_{0_V}(V;q^*L) = \sE^{a+2,b+1}_{0_{V \oplus L}}(V\oplus L) = \sE^{a+2,b+1}(\Th(V\oplus L)).
  \]
\end{definition}

\begin{remark}
  The product structure on $\sE$-cohomology extends to a product structure on twisted $\sE$-cohomology: namely, for $X \in \Sm_B$ and $L,M$ two line bundles on $X$, combining the cup product
  \[
    \cup:\sE^{a+2,b+1}_{0_L}(L)\otimes \sE^{c+2,d+1}_{0_M}(M)\to \sE^{a+c+4, b+d+2}_{0_{L\oplus M}}(L\oplus M),
  \]
  the canonical isomorphism $\alpha_{L, M}:\det(L\oplus M)\to L\otimes M$, and the Thom isomorphism $\vartheta_{L\oplus M}$, we obtain a map
  \[
    \cup:\sE^{a,b}(X;L)\otimes \sE^{c,d}(X;M)\to \sE^{a+c, b+d}(X;L\otimes M),
  \]
  as well as a version with supports. 
\end{remark}

\begin{remark}
  \label{rmk:TwistGeneral}
  The definitions of twisted $\sE$-cohomology given in Definition~\ref{def:Twist} are instances of a more general definition. Namely, for $X \in \Sm_B$, $L \to X$ a line bundle, and any $T \in \SH(B)$ equipped with an identification $T \simeq \pi_{X\sharp}(T')$ for some $T' \in \SH(X)$, we may define the twisted $\sE$-cohomology $\sE^{**}(T;L)$ by
  \[
    \sE^{a,b}(T;L) := \Hom_{\SH(X)}(\Sigma^L T', \rS^{a+2,b+1} \wedge \pi_X^*\sE).
  \]
  Of course, this notation is abusive since $\sE^{**}(T;L)$ really depends on $T' \in \SH(X)$. This construction recovers the notions introduced in Definition~\ref{def:Twist} as follows:
  \begin{itemize}
  \item We have $\sE^{**}(X;L) \simeq \sE^{**}(T;L)$ for $T = \Sigma^\infty_\T X_+ \simeq \pi_{X\sharp}(1_X)$.
  \item We have $\sE^{**}_Z(X;L) \simeq \sE^{**}(T;L)$ for $T = X_Z/B \simeq \pi_{X\sharp}(X_Z/X)$.
  \item We have $\sE^{**}(\Th(V);L) \simeq \sE^{**}(T;L)$ for $T = \Th(V) \simeq \pi_{X\sharp}(\Th_X(V))$.
  \end{itemize}
  The extra generality will be invoked later on (see Lemma~\ref{lem:Duality}) for the case
  \[
    T = X_Z/B_\BM = \pi_{X!}(X_Z/X_\BM) \iso \pi_{X\sharp}(\Sigma^{-T_{X/B}} X_Z/X_\BM),
  \]
  where the last identification is by Atiyah duality (Remark~\ref{rmk:Atiyah}).
\end{remark}

The following construction is due to Ananyevskiy  \cite[Corollary 1]{AnanSL}.

\begin{construction}
  \label{const:CanonThom}
  Let $X \in \Sm_B$, let $q:V\to X$ be a rank $r$ vector bundle, and let $p:L\to X$ denote the determinant bundle $\det V$; let $p':L^{-1}\to X$ denote the inverse of $L$. Then we have canonical isomorphisms
  \[
    \alpha_{V, L^{-1}}:\det(V\oplus L^{-1})\to \sO_X,\quad
    \alpha_{L^{-1}\oplus L}:\det(L^{-1}\oplus L)\to \sO_X.
  \]
  Consider now the two pullback bundles
  \[
    p_V:q^*(L^{-1}\oplus L)\to V, \quad q_L\oplus p'_L:p^*(V\oplus L^{-1})\to L,
  \]
  which inherit trivializations of their determinants. For $(\sE, \th_{(-)})$ an $\SL$-oriented motivic spectrum, Lemma~\ref{lem:ThomIso1} gives us isomorphisms
  \[
    \vartheta_{q^*(L^{-1}\oplus L), q^*\alpha_{L^{-1}\oplus L}}:\sE^{a+2r,b+r}_{0_V}(V)\to
    \sE^{a+2r+2,b+r+1}_{q^{-1}(0_{L^{-1}}\oplus 0_L)\cap p_V^{-1}(0_V)}(q^*(L^{-1}\oplus L))
  \]
  and
  \[
    \vartheta_{p^*(V\oplus L^{-1}), p^*\alpha_{V\oplus L^{-1}}}:\sE^{a+2,b+1}_{0_L}(L)\to
    \sE^{a+2r+2,b+r+1}_{p^{-1}(0_V\oplus 0_{L^{-1}})\cap p_L^{\prime-1}(0_V)}(p^*(V\oplus L^{-1})).
  \]
  However, as $X$-schemes $q^*(L^{-1}\oplus L)$ and $p^*(V\oplus L^{-1})$ are both canonically isomorphic to $V\oplus L^{-1}\oplus L\to X$, and via this isomorphism, the closed subsets 
  $q^{-1}(0_{L^{-1}}\oplus 0_L)\cap p_V^{-1}(0_V)$ and $p^{-1}(0_V\oplus 0_{L^{-1}})\cap p_L^{\prime-1}(0_V)$ are both equal to $0_{V\oplus L^{-1}\oplus L}$. We thus have a canonical isomorphism 
  \[
    \phi: \sE^{a+2r+2,b+r+1}_{q^{-1}(0_{L^{-1}}\oplus 0_L)\cap p_V^{-1}(0_V)}(q^*(L^{-1}\oplus L))
    \to \sE^{a+2r+2,b+r+1}_{p^{-1}(0_V\oplus 0_{L^{-1}})\cap p_L^{\prime-1}(0_V)}(p^*(V\oplus L^{-1}))
  \]
  
  Combining all of the above the, we obtain the \emph{Thom isomorphism}
  \begin{equation}\label{eqn:CanonThom}
    \vartheta_V:\sE^{a,b}(X;L)\to \sE^{a+2r, b+r}(\Th(V)),
  \end{equation}
  defined by
  \[
    \vartheta_V:=\vartheta_{q^*(L^{-1}\oplus L), q^*\alpha_{L^{-1}\oplus L}}^{-1}\circ\phi\circ \vartheta_{p^*(V\oplus L^{-1}), p^*\alpha_{V\oplus L^{-1}}}.
  \]
  This construction extends to cohomology with supports in an evident manner.   
\end{construction}

We next discuss the Thom classes in twisted $\sE$-cohomology governing this more general Thom isomorphism.

\begin{definition}
  \label{def:CanonThomClass}
  Let $X \in \Sm_B$ and let  $q:V\to X$ be a rank $r$ vector bundle. The \emph{canonical Thom class} $\th_V \in \sE^{2r, r}(\Th(V);\det^{-1} V)$ is defined as follows. As noted in Definition~\ref{def:Twist}, we have
  \[
    \sE^{2r, r}(\Th(V);\det^{-1} V) = \sE^{2r+2, r+1}(\Th(V\oplus \det^{-1}V)).
  \]
  The isomorphism
  \[
    \alpha := \alpha_{V,\det^{-1}V}:\det(V\oplus \det^{-1}V)\to \sO_X
  \]
  gives us the Thom class $\th_{V\oplus \det^{-1}V, \alpha}\in \sE^{2r+2, r+1}(\Th(V\oplus \det^{-1}V))$, and we define $\th_V$ to be the corresponding element of $\sE^{2r, r}(\Th(V);\det^{-1} V)$ under the above identification.
\end{definition}

\begin{remark}
  Let $X$ and $V$ be as in Definition~\ref{def:CanonThomClass}. Let $\pi_X:X\to B$ denote the structure morphism. Then the Thom class $\th_V$ is an element of 
  \begin{align*}
    \sE^{2r, r}(\Th(V);\det^{-1} V)
    &=\sE^{2r+2, r+1}(\pi_{X\sharp}(\Sigma^{V\oplus \det^{-1}(V)}(1_X))\\
    &=  \Hom_{\SH(B)}(\pi_{X\sharp}(\Sigma^{V\oplus \det^{-1}(V)}(1_X), \rS^{2r+2, r+1}\wedge \sE) \\
    &\simeq \Hom_{\SH(X)}(\Sigma^{V\oplus \det^{-1}(V)}(1_X), \rS^{2r+2, r+1}\wedge \pi_X^*\sE). \\
    &\simeq \Hom_{\SH(X)}(1_X, \rS^{2r+2, r+1}\wedge \Sigma^{-(V\oplus \det^{-1}(V))} \pi_X^*\sE).
  \end{align*}
  Thus, via the multiplication on $\sE$, the class $\th_V$ induces a map
  \[
    \times\th_V :\pi_X^*\sE\to \rS^{2r+2, r+1}\wedge \Sigma^{-(V\oplus \det^{-1}(V))} \pi_X^*\sE
  \]
  in $\SH(X)$.
\end{remark}

\begin{lemma}
  \label{lem:GenThomIso}
  The map $\times\th_V$ defined above is an isomorphism in $\SH(X)$.
\end{lemma}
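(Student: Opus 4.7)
My plan is to reduce this spectrum-level claim to the cohomology-level Thom isomorphism of Lemma~\ref{lem:ThomIso1}. The starting observation is that, by the very definition of the canonical Thom class, $\th_V$ corresponds under the identification
\[
  \sE^{2r, r}(\Th(V); \det^{-1}V) = \sE^{2r+2, r+1}(\Th(V \oplus \det^{-1}V))
\]
to the oriented Thom class $\th_{V \oplus \det^{-1}V, \alpha}$, where $\alpha = \alpha_{V, \det^{-1}V}$ is the canonical trivialization. Tracing through the chain of adjunctions in the remark preceding the lemma, the map $\times \th_V$ is then the $\pi_X^*\sE$-module action map obtained (using $\pi_X^* \mu$) from the spectrum-level morphism representing $\th_{V \oplus \det^{-1}V, \alpha}$.

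To verify $\times \th_V$ is an isomorphism in $\SH(X)$, I would use that $\SH(X)$ is generated under colimits and bigraded shifts by the objects $\Sigma^\infty_\T Y_+$ for $Y \in \Sm_X$. For each such $Y$, with smooth structure map $f : Y \to X$ (so that $Y \in \Sm_B$ since $X \in \Sm_B$), it suffices to check that $\times \th_V$ induces an isomorphism on $\Hom_{\SH(X)}(\Sigma^\infty_\T Y_+, \rS^{a,b} \wedge -)$ for all $(a,b)$. Using the adjunction $f_\sharp \dashv f^*$ together with the identifications $f^*\pi_X^*\sE \simeq \pi_Y^*\sE$ and $f^* \Sigma^{-W} \simeq \Sigma^{-f^*W} f^*$ from Remark~\ref{rmk:Atiyah}, these Hom groups are identified respectively with $\sE^{a,b}(Y)$ and with $\sE^{a+2r, b+r}(\Th(f^*V); \det^{-1}(f^*V))$, and the induced map is cup product with the pullback $f^*\th_V$.

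By naturality of the oriented Thom classes and of the canonical trivialization $\alpha$, this pullback is identified with $\th_{f^*V \oplus \det^{-1}(f^*V), f^*\alpha}$, and cup product with this oriented Thom class is precisely the twisted Thom isomorphism $\vartheta_{f^*V}$ of Construction~\ref{const:CanonThom}, which is an isomorphism by Lemma~\ref{lem:ThomIso1}. The principal obstacle I anticipate is the adjunction bookkeeping in the second step: carefully identifying the spectrum-level map $\times \th_V$ with cup product by $\th_V$ in twisted $\sE$-cohomology, and keeping track of the determinant twist throughout. Once that is in hand, the conclusion follows directly from Lemma~\ref{lem:ThomIso1}.
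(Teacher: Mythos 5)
Your argument is correct and is essentially the paper's own proof: both reduce, via the fact that $\SH(X)$ is generated as a localizing subcategory by the shifted objects $\rS^{a,b}\wedge Y/X$, to checking the map on these generators, where the adjunction $f_\sharp\dashv f^*$ and naturality of the Thom classes identify it with the oriented Thom isomorphism of Lemma~\ref{lem:ThomIso1} for $f^*V\oplus\det^{-1}(f^*V)$. The only cosmetic difference is that the paper first treats $x=\rS^{a,b}\wedge 1_X$ and then passes to $Y/X$ by applying $p^*$, while you pull back the bundle to $Y$ directly; the substance is the same.
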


\begin{proof}
  For each object $x\in \SH(X)$, $\times\th_V$ induces a map
  \[
    \vartheta_{V,x}:\Hom_{\SH(X)}(x, \pi_X^*\sE)\to \Hom_{\SH(X)}(x,\rS^{2r+2, r+1}\wedge \Sigma^{-(V\oplus \det^{-1}(V))} \pi_X^*\sE)
  \]
  By the Yoneda lemma, it suffices to show that $\vartheta_{V,x}$ is an isomorphism for all $x\in \SH(X)$. 
  
  The collection of objects $x$ for which $\vartheta_{V,x}$ is an isomorphism is closed under arbitrary direct sums, hence is a localizing subcategory of $\SH(X)$. The objects $x=\rS^{a,b} \wedge 1_X$ are contained in this subcategory, since for these objects the map $\vartheta_{V,x}$ identifies with the Thom isomorphism $\theta_{V\oplus\det^{-1}(V),\alpha}$ (where $\alpha$ is as in Definition~\ref{def:CanonThomClass}). For $p:Y\to X$ in $\Sm_X$, applying $p^*$ and using the adjunction with $p_\sharp$ shows that furthermore $\vartheta_{V,x}$ is an isomorphism for $x=\rS^{a,b} \wedge Y/X$. As $\SH(X)$ is generated as a localizing subcategory by the objects $\rS^{a,b} \wedge Y/X$, this proves the lemma.
\end{proof}

\begin{definition}
  For $X\in \Sm_B$ and $V\to X$ a rank $r$ vector bundle, we define
  \[
    \vartheta^\sE_V:\Sigma^{1-\det V}\pi_X^*\sE\to \Sigma^{r-V}\pi_X^*\sE
  \]
  to be the composition of isomorphisms
  \[
    \Sigma^{1-\det V}\pi_X^*\sE\xr{\Sigma^{\det V-1}(\times\th_{\det V\oplus \det^{-1}V})^{-1}}
    \Sigma^{\det^{-1}V-1}\pi_X^*\sE\xr{\Sigma^{1-\det^{-1}V}(\times\th_V)}
    \Sigma^{r-V}\pi_X^*\sE.
  \]
\end{definition}

\begin{remark}
  \label{rem:CanonicalThom}
  Let $X \in \Sm_B$ and let $V\to X$ be a rank $r$ vector bundle. Under the identification $\Th(V)=\pi_{X\#}\Sigma^V(1_X)$ and the isomorphisms
  \[
    \sE^{a,b}(X,\det V)\simeq \Hom_{\SH(X)}(1_X, \rS^{a+2,b+1}\wedge \Sigma^{-\det V}\pi_X^*\sE)
  \]
  and
  \[
    \sE^{2r+a, r+b}(\Th(V))\simeq \Hom_{\SH(X)}(1_X, \rS^{2r+a, r+b} \wedge \Sigma^{-V}\pi_X^*\sE)
  \]
  the Thom isomorphism $\vartheta_V:\sE^{a,b}(X,\det V)\to \sE^{2r+a, r+b}(\Th(V))$ is the map induced by $\vartheta_V^\sE$.
\end{remark}

\begin{remark}
  \label{rem:MultThom}
  Let $X \in \Sm_B$ and let $V\to X$ and $W\to X$ be vector bundles of respective ranks $r_V$ and $r_W$. We have have the multiplication map
  \[
    \sE^{a,b}(\Th(V), \det^{-1}V)\otimes\sE^{c,d}(\Th(W),\det^{-1}W)\to
    \sE^{a+c, b+d}(\Th(V\oplus W),\det^{-1}(V\oplus W))
  \]
  induced by the isomorphism
  \[
    V/(V\setminus\{0_V\})\wedge_XW/(W\setminus\{0_W\})\to (V\oplus W)/(V\oplus W\setminus\{0_{V\oplus W}\})
  \]
  in $\H_\bul(X)$ and our canonical isomorphism $\det(V)\otimes\det(W)\simeq \det(V\oplus W)$. The multiplicative property of the Thom classes (Definition~\ref{def:SLOr}(ii)) implies a similar multiplicativity for the canonical Thom classes:
  \[
    \th_{V\oplus W}=\th_V\cup \th_W.
  \] 

  This then implies, roughly speaking, that
  \[
    \text{``}\ \vartheta_{W\oplus V}^\sE=\vartheta_W^\sE\circ \vartheta_V^\sE.\ \text{''}
  \]
  More precisely, after using properties of $\Sigma^{(-)}$ to make the necessary identifications, the following diagram commutes:
  \[
    \xymatrix{
      \Sigma^{1-\det(W\oplus V)}\pi_X^*\sE\ar[rr]^{\vartheta_{\det W\oplus \det V}^\sE}\ar[d]_{\vartheta_{W\oplus V}^\sE}&&\Sigma^{2-(\det W\oplus\det V)}\pi_X^*\sE\ar[r]^\sim&
      \Sigma^{1-\det W}\Sigma^{1-\det V}\pi_X^*\sE\ar[d]^{\Sigma^{1-\det W}\vartheta_E^\sE}\\
      \Sigma^{r_W+r_V-(W\oplus V)}\pi_X^*\sE&&\Sigma^{r_V-V}\Sigma^{1-\det W}\pi_X^*\sE\ar[ll]^{\Sigma^{r_V-V}\vartheta^\sE_W}&\Sigma^{1-\det W}\Sigma^{r_V-V}\pi_X^*\sE\ar[l]^\sim
    }
  \]
\end{remark}

\begin{remark}
  Using Remark~\ref{rem:MultThom}, the definition of $\vartheta^\sE_{-}$ extends to virtual bundles by setting $\vartheta^\sE_{V-W}:=\vartheta_V^\sE\circ (\vartheta_W^\sE)^{-1} =
  (\vartheta_W^\sE)^{-1}\circ \vartheta_V^\sE$ (with these identities understood as in Remark~\ref{rem:MultThom}), giving the isomorphism
  \[
    \vartheta^\sE_{V-W}:\Sigma^{1-\det V\otimes\det^{-1} W}\pi_X^*\sE\xr{\sim}\Sigma^{r_V-r_W-V+W}\pi_X^*\sE.
  \]
  Remark~\ref{rem:MultThom} then extends directly to virtual bundles. 

If we have an exact sequence $0\to V'\to V\to V''\to0$, then the corresponding isomorphisms $\Sigma^V\simeq \Sigma^{V'\oplus V''}$ and  $\det V\simeq \det(V'\oplus V'')$ transform $\vartheta_V^\sE$ to $\vartheta_{V'\oplus V''}^\sE$. 
\end{remark}

\begin{remark}
  \label{rem:Oriented}
  If the $\SL$-orientation on $\sE$ extends to a  $\GL$-orientation, then all the   results of this section for $\SL$-oriented theories hold for $\sE$ in simplified form: we can omit all the twisting by line bundles and replace $\Sigma^{1-\det V}\pi_X^*\sE$ with $\pi_X^*\sE$ using the Thom class $\th_{\det V}$ to define an isomorphism $\Sigma_{\T}\pi_X^*\sE\simeq \Sigma^{\det V}\pi_X^*\sE$.
  \end{remark}

We close this section with one last result about twisted $\sE$-cohomology in the $\SL$-oriented setting.

\begin{proposition}
  Let $X \in \Sm_B$, let $L, M$ be two line bundles on $X$, and let $Z\subseteq X$ a closed subset. Then there is a natural isomorphism
  \[
    \psi_{L,M}:\sE^{*,*}_Z(X;L)\to \sE^{*,*}_Z(X;L\otimes M^{\otimes 2}).
  \]
\end{proposition}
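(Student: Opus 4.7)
My plan is to construct the isomorphism $\psi_{L,M}$ by leveraging the key structural feature of the $\SL$-orientation that, through the Thom isomorphism of Construction~\ref{const:CanonThom} and Lemma~\ref{lem:GenThomIso}, the object $\Sigma^V\pi_X^*\sE$ of $\SH(X)$ depends (up to canonical isomorphism) only on the rank and determinant of the vector bundle $V$, as made precise by Definition~\ref{def:CanonThomClass} and the discussion following it. Since $L$ and $L\otimes M^{\otimes 2}$ represent the same class in $\Pic(X)/2\Pic(X)$, this should give the desired twist invariance.

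Concretely, I would compare the rank-$2$ vector bundles $V_1 := L\oplus\sO_X$ and $V_2 := (L\otimes M^{\otimes 2})\oplus M^{-2}$, both of which have rank $2$ and determinant canonically isomorphic to $L$. The common source $\Sigma^{1-L}\pi_X^*\sE$ of the two Thom isomorphisms $\vartheta_{V_1}^\sE$ and $\vartheta_{V_2}^\sE$ provides a canonical isomorphism $\Sigma^{V_1}\pi_X^*\sE \simeq \Sigma^{V_2}\pi_X^*\sE$ in $\SH(X)$. Combined with the analogous comparison for the rank-$2$ bundle $M^{-2}\oplus M^{\otimes 2}$ (with its canonically trivial determinant given by $\alpha_{M^{-2},M^{\otimes 2}}$), which eliminates the spurious $\Sigma^{M^{-2}}$ factor, this should produce an isomorphism $\Sigma^{-L}\pi_X^*\sE \simeq \Sigma^{-L\otimes M^{\otimes 2}}\pi_X^*\sE$ of $\pi_X^*\sE$-modules in $\SH(X)$. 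Translating through Remark~\ref{rmk:TwistGeneral} then converts this to the bidegree-preserving natural isomorphism $\psi_{L,M}$ on twisted $\sE$-cohomology, and the support condition on $Z$ is preserved throughout since all of the identifications respect the closed-open localization structure.

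The main obstacle in this approach is the bookkeeping: the various $\Sigma_\T$-shifts arising from the auxiliary Thom isomorphism steps must be arranged to cancel cleanly, so that the composite is genuinely bidegree-preserving rather than shifted. This relies crucially on the multiplicativity of the canonical Thom classes established in Remark~\ref{rem:MultThom}, specifically the compatibility diagram comparing $\vartheta^\sE_{W\oplus V}$ with the composite $\vartheta^\sE_W\circ\vartheta^\sE_V$. Once this assembly is in place, naturality of $\psi_{L,M}$ in $X$, $L$, $M$, and $Z$ is immediate from the naturality axiom in Definition~\ref{def:SLOr} and the corresponding naturality of the Thom isomorphisms and of the canonical determinant identifications $\alpha_{V_1,V_2}$.
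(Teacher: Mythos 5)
Your first step is sound, but it already contains the seed of the problem. Since $\vartheta^\sE_V$ is an isomorphism, $\Sigma^V\pi_X^*\sE$ is indeed determined by the rank and the determinant of $V$, so comparing $V_1=L\oplus\sO_X$ with $V_2=(L\otimes M^{\otimes 2})\oplus M^{-2}$ gives a canonical isomorphism $\Sigma^{1+L}\pi_X^*\sE\simeq\Sigma^{(L\otimes M^{\otimes 2})+M^{-2}}\pi_X^*\sE$. The gap is the claimed elimination of the ``spurious'' factor. The Thom class of $M^{-2}\oplus M^{\otimes 2}$ with its canonical trivialization only yields $\Sigma^{M^{-2}+M^{\otimes 2}}\pi_X^*\sE\simeq\Sigma^2_\T\pi_X^*\sE$, i.e.\ it trades $\Sigma^{M^{-2}}$ for $\Sigma^{2-M^{\otimes 2}}$; substituting, you arrive at $\Sigma^{1+L}\pi_X^*\sE\simeq\Sigma^{2+(L\otimes M^{\otimes 2})-M^{\otimes 2}}\pi_X^*\sE$, and the identification you still need, $\Sigma^{M^{\otimes 2}}\pi_X^*\sE\simeq\Sigma_\T\pi_X^*\sE$, is (an equivalent form of) the proposition itself in the case $L=\sO_X$. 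This is not a bookkeeping issue that Remark~\ref{rem:MultThom} can repair: every isomorphism in your toolkit --- $\times\th_{V,\theta}$, $\times\th_V$, $\vartheta^\sE_V$, and the identifications coming from exact sequences --- relates $\Sigma^A\pi_X^*\sE$ and $\Sigma^B\pi_X^*\sE$ only for virtual bundles $A,B$ of equal rank and with \emph{equal} determinant in $\Pic(X)$, not merely equal determinant modulo squares. Hence no composite of such maps can identify $\Sigma^{-L}\pi_X^*\sE$ with $\Sigma^{-L\otimes M^{\otimes 2}}\pi_X^*\sE$ unless $M^{\otimes 2}\simeq\sO_X$: the statement that the twist matters only modulo squares is strictly stronger than what the $\SL$-oriented Thom formalism gives by itself.

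The missing ingredient is geometric, and it is exactly what the paper's proof supplies. After a reduction of the same flavor as yours (a twisted Thom isomorphism for the rank-two bundle $L\oplus M$ identifies $\sE^{*,*}_Z(X;L)$ and $\sE^{*,*}_Z(X;L\otimes M^{\otimes 2})$ with cohomology of the total space of $L\oplus M$, with supports in $s(Z)$, twisted by $q^*M^{-1}$ and $q^*M$ respectively), the statement reduces to a natural isomorphism $\psi_L:\sE^{*,*}_Z(X;L)\to\sE^{*,*}_Z(X;L^{-1})$. This is proved, following Ananyevskiy, by comparing Thom spaces directly: the closed subscheme $Y=\mu^{-1}(X\times 1)\subset L\oplus L^{-1}$, where $\mu(x,y)=x\cdot y$, projects isomorphically to $L\setminus 0_L$ and to $L^{-1}\setminus 0_{L^{-1}}$, so the two collapse maps give a zig-zag of isomorphisms between $\Th(L)$ and $\Th(L^{-1})$ (with supports) in $\SH(B)$, valid for an arbitrary motivic spectrum and independent of any orientation. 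Some input of this kind, beyond the Thom classes, is unavoidable, and without it your construction cannot close up.
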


\begin{proof}
  Let $s:X\to L\oplus M$ be the zero-section. We have the Thom isomorphisms
  \[
    \sE^{*,*}_Z(X;L)\simeq \sE^{*+4,*+2}_{s(Z)}(L\oplus M;M^{-1}),\quad
    \sE^{*,*}_Z(X;L\otimes M^{\otimes 2})\simeq \sE^{*+4,*+2}_{s(Z)}(L\oplus M;M).
  \]
  Replacing $L \oplus M$ by $X$ and $M$ by $L$, this reduces us to showing that there is a natural isomorphism
  \[
    \psi_L:\sE^{*,*}_Z(X;L)\to \sE^{*,*}_Z(X;L^{-1})
  \]
 
  For this, we follow the proof of \cite[Lemma 2]{AnanSL}. We have the morphism of $X$-schemes
  \[
    L\oplus L^{-1}=L\times_XL^{-1}\xr{\mu} X\times_B\A^1
  \]
  defined by $\mu(x, y)=x\cdot y$; let $Y:=\mu^{-1}(X\times 1)$. Setting $L_0:=L\setminus 0_L$ and $L^{-1}_0:=L^{-1}\setminus 0_L$, we see that $Y$ is a closed subscheme of  $L\times_XL^{-1}$ projecting isomorphically to $L_0$ via $p_1$ and isomorphically to $L_0^{-1}$ via $p_2$.

  Consider the commutative diagram
  \[
    \xymatrix{
      Y\ar[r]\ar[d]_{p_1}& L\times_XL^{-1}\ar[d]_{p_1}\ar[r]& L\times_XL^{-1}/Y\ar[d]^{\bar{p}_1}\\
      L_0\ar[r] & L\ar[r] & L/L_0
    }
  \]
  whose rows are cofiber sequences. As the first two vertical maps are isomorphisms in $\H(B)$, the map $\bar{p}_1$ induces an isomorphism
  \[
    \bar{p}_1/B:  (L\times_BL^{-1}/Y)/B\to \Th(L)/B
  \]
  in $\SH(B)$.  Similarly, we have the isomorphism
  \[
    \bar{p}_2/B:(L\times_BL^{-1}/Y)/B\to  \Th(L^{-1})/B
  \]
  in $\SH(B)$. Replacing $X$ with $U:=X\setminus Z$, we have the isomorphisms
  \[
    \bar{p}_{1U}/B: (L\times_BL^{-1}\times_XU/Y\times_XU)/B\to  \Th(L\times_XU)/B.
  \]
  and 
  \[
    \bar{p}_{2U}/B:  (L\times_BL^{-1}\times_XU/Y\times_XU)/B\to\Th(L^{-1}\times_XU)/B
  \]
  It follows that the arrows in following diagram are isomorphisms in $\SH(B)$ after applying $-/B$:
  \[
    \xymatrix{
      &\Th(L)/\Th(L\times_XU)\\
      \hbox to 150pt{$\left(L\times_XL^{-1}/Y\right)/\left(L\times_XL^{-1}\times_XU/Y\times_XU\right)$\hss}\ar[ur]^{\overline{\bar{p}}_1}\ar[dr]_{\overline{\bar{p}}_2}\\
      &\Th(L^{-1})/\Th(L^{-1}\times_XU)
    }
  \]
  Finally, applying $\Hom_{\SH(B)}(-, \Sigma^{*+2,*+1}\sE)$ gives the desired isomorphism
  \[
    \psi_L:\sE^{*,*}_Z(X;L)\to \sE^{*,*}_Z(X;L^{-1}). \qedhere
  \]
\end{proof}
  
\section{Projective pushforward in twisted cohomology}
\label{sec:Push}

In this section, we describe how one gets projective pushforward maps in twisted $\sE$-cohomology for $\sE$ an $\SL$-oriented motivic spectrum. We rely on the six-functor formalism. This is a bit different from the treatment of projective pushforward given by Ananyevskiy in \cite{AnanSL}: in that treatment, one relies on the factorization of an arbitrary projective morphism $Y\to X$ into a closed immersion $Y\to X\times\P^N$ followed by a projection $X\times\P^N\to X$. This factorization property will however reappear in our treatment when we discuss the uniqueness of the pushforward maps in \S\ref{sec:SLOrCohThy}.

We continue to work over a noetherian separated base scheme $B$ of finite Krull dimension.


\begin{lemma}\label{lem:PushforwardPurity}
Let $s:Y\to X$ be  a section of a smooth morphism $p:X\to Y$ and let $\eta:\id\to s_*s^*$, $\epsilon:s^*s_*\to \id$ be unit and counit of adjunction.  Then the composition
\[
 \id_{\SH(Y)}\xr{\sim} p_!\circ s_!= p_!\circ s_*\xr{s^*}s^*\circ s_*\xr{\epsilon}\id_{\SH(Y)}
\]
is the identity. Here the morphism $s^*:p_!\to s^*$ is the one constructed in Remark~\ref{rmk:BMFunctor}.
\end{lemma}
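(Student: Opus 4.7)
The plan is to unwind the definition of the natural transformation $s^*: p_! \to s^*$ from Remark~\ref{rmk:BMFunctor} and reduce to one of the triangle identities for the adjunction $s^* \dashv s_*$, using the naturality of the coherence isomorphism $p_! s_! \simeq (ps)_!$.

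First, we observe that $s$, being a section of the separated morphism $p$, is automatically a closed immersion; in particular $s_! = s_*$ and $\eta^s_{!*}$ is the identity. Applying Remark~\ref{rmk:BMFunctor} with $\pi_Y = p$, $\pi_Z = ps = \id_Y$, and $g = s$, the definition of $s^*: p_! \to s^*$ then reduces to the composite
\[
p_! \xr{p_! u_s} p_! s_* s^* = p_! s_! s^* \xr{\sim} (ps)_! s^* = s^*,
\]
with the last arrow being the coherence isomorphism $p_! s_! \simeq (ps)_!$. Whiskering on the right by $s_*$, the natural transformation $s^*: p_! s_* \to s^* s_*$ appearing in the lemma is
\[
p_! s_* \xr{p_! u_s s_*} p_! s_* s^* s_* = p_! s_! s^* s_* \xr{\sim} s^* s_*.
\]

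Write $\phi : p_! s_! \xr{\sim} (ps)_! = \id$ for the coherence isomorphism. The composition of the lemma then reads
\[
\id \xr{\phi^{-1}} p_! s_* \xr{p_! u_s s_*} p_! s_* s^* s_* \xr{\phi \cdot s^* s_*} s^* s_* \xr{\epsilon} \id.
\]
Applying naturality of $\phi$ to the counit $\epsilon : s^* s_* \to \id$ yields $\epsilon \circ (\phi \cdot s^* s_*) = \phi \circ (p_! s_* \epsilon)$, so this rewrites as $\phi \circ p_!\bigl((s_* \epsilon)(u_s s_*)\bigr) \circ \phi^{-1}$. By the triangle identity for the adjunction $s^* \dashv s_*$, the inner factor $(s_* \epsilon)(u_s s_*)$ equals $\id_{s_*}$, and hence the whole composition collapses to $\phi \circ \id \circ \phi^{-1} = \id$, as desired.

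We do not anticipate any substantive obstacle: beyond checking that $s$ is a closed immersion, the argument is a purely formal transport of the triangle identity along the coherence isomorphism $p_! s_! \simeq \id$. The main bookkeeping is to distinguish carefully the symbol ``$s^*$'' in its two roles, namely as the pullback functor and as the natural transformation $p_! \to s^*$ constructed in Remark~\ref{rmk:BMFunctor}.
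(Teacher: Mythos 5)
Your argument is correct and follows essentially the same route as the paper's: unwind the definition of $s^*\colon p_!\to s^*$ from Remark~\ref{rmk:BMFunctor} (using $s_!=s_*$ and $\eta^s_{!*}=\id$ for the closed immersion $s$), whisker on the right by $s_*$, transport along the coherence isomorphism $p_!s_*\simeq\id$ by naturality, and conclude from the triangle identity $(s_*\epsilon)\circ(\eta s_*)=\id_{s_*}$. The only cosmetic difference is that the paper derives this triangle identity indirectly, from the dual identity $s_*\epsilon s^*\circ\eta s_*s^*=\id_{s_*s^*}$ together with the fact that $s^*s_*$ is an equivalence, rather than quoting it outright.
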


\begin{proof} The functor $s^*s_*$ is an equivalence \cite[Corollary 4.19]{Hoyois6}. As a general property of adjoint functors, $s_*\epsilon s^*\circ \eta s_*s^*=\id_{s_*s^*}$ (see, e.g., \cite[pg. 134]{MacLane}), hence  $s_*\epsilon s^*s_*\circ \eta s_*s^*s_*=\id_{s_*s^*s_*}$ and thus  $s_*\epsilon\circ \eta s_*=\id_{s_*}$.  The result follows from the commutative diagram
\[
\xymatrix{
&\id\ar[d]^\wr\\
p_{X!}\circ s_*\ar[d]_{p_{X!}\eta s_*}\ar[dr]^{\id}\ar@/_60pt/[ddd]_{s^*}&p_{X!}\circ s_!\ar@{=}[l] \ar@{=}[d]\\
p_{X!}\circ s_*s^*s_*\ar[r]^{p_{X!} s_*\epsilon}\ar@{=}[d] &p_{X!}\circ s_*\ar@{=}[d]\\
p_{X!}\circ s_!\circ s^*\circ s_*\ar[d]_\wr\ar[r]^{p_{X!} s_!\epsilon}&p_{X!}\circ s_!\ar[d]^\wr\\
s^*\circ s_*\ar[r]_\epsilon&\id
}
\]
\end{proof}

\begin{remark}\label{rem:Pushforward} Let $s:Y\to V$ be the zero-section of a vector bundle $p:V\to Y$. Then the canonical isomorphism $\id_{SH(Y)}\simeq p_!s_*$ is equal to the composition
\[
\id_{\SH(Y)}\simeq \Sigma^{-V}\Sigma^V=\Sigma^{-V}p_\#s_*\simeq p_\#\Sigma^{-p^*V}s_*\simeq
p_!s_*
\]
Indeed $\Sigma^{-V}\Sigma^V\to \id_{\SH(Y)}$ is the counit of the adjunction $\Sigma^{-V}\dashv\Sigma^V$, corresponding to $\id:\Sigma^V\to \Sigma^V$,  $\Sigma^{-V}p_\#s_*\to  \id_{\SH(Y)}$ is the counit of the  adjunction $\Sigma^{-V}=s^!p^*\dashv p_\#s_*$. The functors $\Sigma^{-V}$ and $p_\#$ are both left adjoints, so $\Sigma^{-V}p_\#$ is left adjoint to $s_*$ and the counit of the adjunction 
\[
\Sigma^{-V}p_\#\dashv p^*\Sigma^V\simeq \Sigma^{p^*V}p^*\simeq p^!\simeq s_*
\]
is the same as that of $\Sigma^{-V}\dashv p_\#s_*$. Composing with the isomorphism $p_!\simeq p_\#\Sigma^{-p^*V}\simeq \Sigma^{-V}p_\#$, we see that the counit of the adjunction $p_!\dashv s_*$ is induced from that of 
$\Sigma^{-V}p_\#\dashv s_*$, and thus agrees with the canonical isomorphism $p_!s_*\simeq \id_{\SH(Y)}$.
\end{remark}

\begin{lemma}
  \label{lem:Duality}
  Let $(\sE, \th_{(-)})$ be an $\SL$-oriented motivic spectrum in $\SH(B)$. Suppose given $X\in \Sm_B$ of dimension $d_X$ over $B$, $i:Z\to X$ a closed subset, and $p:L\to X$ a line bundle. Then the isomorphism
  \[
    \vartheta_{L-T_{X/B}}^\sE:\Sigma^{1-\det(L-T_{X/B})}\pi_X^*\sE\to 
    \Sigma^{r_{L-T_{X/B}}-L+T_{X/B}}\pi_X^*\sE
  \]
  induces an isomorphism
  \[
    \rho_{X, Z, L}:\sE^{a,b}_Z(X; \omega_{X/B}\otimes L)\simeq \sE^{a-2d_X,b-d_X}(X_Z/B_\BM; L),
  \]
  where the right-hand side is as defined in Remark~\ref{rmk:TwistGeneral}.
\end{lemma}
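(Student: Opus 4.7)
The plan is to unpack both sides of the claimed isomorphism as $\Hom_{\SH(X)}$-groups out of $X_Z/X$, and then identify the claimed map $\rho_{X,Z,L}$ as the map induced on these Hom-groups by the isomorphism $\vartheta^\sE_{L-T_{X/B}}$ in $\SH(X)$ (after a trivial shift). The key algebraic identity to keep in mind throughout is $\det(L-T_{X/B}) \simeq L\otimes \det^{-1}T_{X/B} = \omega_{X/B}\otimes L$, so that the twist on the left-hand side is indeed the determinant of the virtual bundle whose Thom isomorphism we want to invoke.

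First, I would unpack the left-hand side via Definition~\ref{def:Twist} and the adjunction $\pi_{X\sharp}\dashv \pi_X^*$:
\[
  \sE^{a,b}_Z(X;\omega_{X/B}\otimes L) \simeq \Hom_{\SH(X)}\bigl(X_Z/X,\, \Sigma^{-\det(L-T_{X/B})}\rS^{a+2,b+1}\wedge \pi_X^*\sE\bigr).
\]
Next, for the right-hand side, I would apply Remark~\ref{rmk:TwistGeneral} with $T = X_Z/B_\BM$ and $T' = \Sigma^{-T_{X/B}} X_Z/X_\BM$ (which is the presentation coming from Atiyah duality, Remark~\ref{rmk:Atiyah}), noting that $X_Z/X_\BM = X_Z/X$ for the identity structure map. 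This yields
\[
  \sE^{a-2d_X,b-d_X}(X_Z/B_\BM;L) \simeq \Hom_{\SH(X)}\bigl(X_Z/X,\, \Sigma^{T_{X/B}-L}\rS^{a-2d_X+2,b-d_X+1}\wedge \pi_X^*\sE\bigr).
\]

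Both target objects are of the form $\Sigma^{(\cdot)}\pi_X^*\sE$ smashed with a bigraded motivic sphere, and comparing them reduces (after pulling out a common factor of $\rS^{a+2,b+1}$ and identifying $\rS^{-2d_X,-d_X}$ with the $d_X$-fold Tate loop $\Sigma^{-d_X}_\T$) to an isomorphism
\[
  \Sigma^{-\det(L-T_{X/B})}\pi_X^*\sE \;\simeq\; \Sigma^{-d_X+T_{X/B}-L}\pi_X^*\sE
\]
in $\SH(X)$. This is precisely a Tate loop shift of $\vartheta^\sE_{L-T_{X/B}}$, whose virtual-bundle source $\Sigma^{1-\det(L-T_{X/B})}\pi_X^*\sE$ and target $\Sigma^{r_{L-T_{X/B}}-L+T_{X/B}}\pi_X^*\sE = \Sigma^{1-d_X-L+T_{X/B}}\pi_X^*\sE$ match after applying $\Sigma^{-1}_\T$ to both sides. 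Defining $\rho_{X,Z,L}$ to be the resulting map on Hom-groups then gives the stated isomorphism, since $\vartheta^\sE_{L-T_{X/B}}$ is itself an isomorphism (being a composite of the isomorphisms $\vartheta^\sE_L$ and $(\vartheta^\sE_{T_{X/B}})^{-1}$ from Lemma~\ref{lem:GenThomIso} and the extension to virtual bundles discussed just after).

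There is no real obstacle beyond careful bookkeeping with the $\Sigma^{(-)}$ notation on virtual bundles: the only thing to verify is that the identifications $\rS^{2r,r} \simeq \Sigma^r_\T$ and the canonical isomorphisms $\det(L-T_{X/B}) \simeq \omega_{X/B}\otimes L$ line up with the source and target of $\vartheta^\sE_{L-T_{X/B}}$ after the Tate shift. Once this bookkeeping is done, the proof is a direct consequence of Remark~\ref{rem:CanonicalThom} and the virtual-bundle extension of the Thom isomorphism recorded just afterward.
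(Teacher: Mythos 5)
Your proposal is correct and follows essentially the same route as the paper's proof: both sides are unpacked as $\Hom_{\SH(X)}(i_*(1_Z),-)$ into suspensions of $\pi_X^*\sE$ (using $\det(L-T_{X/B})\simeq\omega_{X/B}\otimes L$ and $r_{L-T_{X/B}}=1-d_X$), and the isomorphism is then induced by a bigraded-sphere shift of $\vartheta^\sE_{L-T_{X/B}}$. The only difference is cosmetic bookkeeping of where the sphere factors $\rS^{a,b}$ versus Tate twists are placed.
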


\begin{proof}
  We have $\det(L-T_{X/B})=\det^{-1}(T_{X/B})\otimes L=\omega_{X/B}\otimes L$ and $r_{L-T_{X/B}}=1-d_X$. Moreover, we have canonical isomorphisms
  \[
    \sE^{a,b}_Z(X; \omega_{X/B}\otimes L)\simeq \Hom_{\SH(X)}(i_*(1_Z), S^{a,b}\wedge\Sigma^{1-\omega_{X/B}\otimes L}\pi_X^*\sE)
  \]
  and
  \[
    \sE^{a-2d_X,b-d_X}(X_Z/B_\BM; L)\simeq
    \Hom_{\SH(X)}(i_*(1_Z), S^{a,b}\wedge\Sigma^{(1-d_X)+T_{X/B}-L}\pi_X^*\sE).
  \]
  Finally,  $\vartheta_{L-T_{X/B}}^\sE$ induces the isomorphism
  \[
    S^{a,b}\wedge \vartheta_{L-T_{X/B}}^\sE:S^{a,b}\wedge\Sigma^{1-\omega_{X/B}\otimes L}\pi_X^*\sE\to S^{a,b}\wedge\Sigma^{(1-d_X)+T_{X/B}-L}\pi_X^*\sE
  \]
  which completes the proof.
\end{proof}

Using the isomorphisms $\rho_{X, Z,L}$, we make the following definition.

\begin{definition}
  Let $(\sE, \th_{(-)})$ be an $\SL$-oriented motivic spectrum in $\SH(B)$, let $f:Y\to X$ be a proper morphism of relative dimension $d$ in $\Sm_B$, let $L\to X$ be a line bundle, and let $Z\subset X$ be a closed subset. Define 
  \[
    f_*:\sE^{a,b}_{f^{-1}(Z)}(Y, \omega_{Y/B}\otimes f^*L)\to 
    \sE^{a-2d,b-d}_Z(X, \omega_{X/B}\otimes L)
  \]
  to be the unique map making the diagram
  \[
    \xymatrix{
      \sE^{a-2d_Y,b-d_Y}(Y_{f^{-1}(Z)}/B_\BM; f^*L)\ar[r]^-{(f^*)^*}&\sE^{a-2d_Y,b-d_Y}(X_Z/B_\BM; L)\\
      \sE^{a,b}_{f^{-1}(Z)}(Y, \omega_{Y/B}\otimes f^*L)\ar[u]^{\rho_{Y, f^{-1}Z, f^*L}}\ar[r]_{f_*}&
      \sE^{a-2d,b-d}_Z(X, \omega_{X/B}\otimes L)\ar[u]_{\rho_{X, Z, L}}
    }
  \]
  commute.
\end{definition}

Let $p:V\to Y$ be a  rank $r$ vector bundle on some $Y\in \Sm_B$, with 0-section $s:Y\to V$. Letting $L=\det V$, the exact sequence
\[
  0\to p^*V\to T_{V/B}\xr{dp}p^*T_{Y/B}\to 0
\]
gives the canonical isomorphism $\omega_{V/B}\simeq p^*(\det^{-1} V\otimes \omega_{Y/B})$, or 
$p^*\det^{-1} V\simeq \omega_{V/B}\otimes\omega^{-1}_{Y/B}$. Letting $(\sE, \th)$ be an $\SL$-oriented motivic spectrum, we have the pushforward map
\[
  s_*:\sE^{a,b}(Y)\to \sE^{ a+2r, b+r}(V, p^*\det^{-1} V),
\]
and the version with supports,
\[
  s_*:\sE^{a,b}(Y)=\sE^{a,b}_Y(Y)\to \sE^{ a+2r, b+r}_{0_V}(V, p^*\det^{-1} V).
\]

\begin{lemma}
  \label{lem:InclThomIso}
  Let $1_Y^\sE\in \sE^{0,0}(Y)$ be the unit $\pi_{Y/B}^*(u)$. Then 
  \[
    s_*(1_Y^\sE)=\th_V \in \sE^{2r, r}_{0_V}(V, p^*\det^{-1} V). 
  \]
  As consequence, $s_*(1_Y^\sE)$ in $\sE^{2r, r}(V, p^*\det^{-1} V)$ is the image of $\th_V$
  under the ``forget supports'' map 
  \[
    \sE^{2r, r}_{0_V}(V, p^*\det^{-1} V)\to  \sE^{2r, r}(V, p^*\det^{-1} V).
  \]
\end{lemma}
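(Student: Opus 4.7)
The plan is to reduce the identity to the spectrum level. Take the twisting bundle on $V$ to be $L := p^*\omega_{Y/B}^{-1}$, so that the canonical isomorphisms $\omega_{V/B}\otimes L \simeq p^*\det^{-1}V$ and $\omega_{Y/B}\otimes s^*L \simeq \sO_Y$ cast $s_*$ in the form $\sE^{a,b}(Y)\to \sE^{a+2r,b+r}_{0_V}(V, p^*\det^{-1}V)$ used in the statement. By the defining diagram for $s_*$, the lemma then amounts to the identity
\[
\rho_{V,0_V,L}(\th_V) \;=\; (s/B_\BM)^*\bigl(\rho_{Y,Y,s^*L}(1_Y^\sE)\bigr)
\]
in the Borel-Moore group $\sE^{\bul,\bul}(V_{0_V}/B_\BM;L)$.

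Next I would unpack both sides using the six-functor formalism. Lemma~\ref{lem:Duality} expresses $\rho_{V,0_V,L}$ and $\rho_{Y,Y,s^*L}$ as the maps induced by $\vartheta^\sE_{L-T_{V/B}}$ and $\vartheta^\sE_{s^*L-T_{Y/B}}$ respectively; Remark~\ref{rem:CanonicalThom} combined with Definition~\ref{def:CanonThomClass} identifies $\th_V$ with the morphism $\vartheta^\sE_V$ under the standard Hom-presentation of twisted cohomology. On the other hand, Remark~\ref{rmk:BMFunctor} describes $(s/B_\BM)^*$ as $\pi_{V!}$ applied to the unit $\eta: 1_V \to s_*s^*1_V = s_*1_Y$. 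Applying Atiyah duality (Remark~\ref{rmk:Atiyah}) together with the canonical splitting $T_{V/B} \simeq p^*T_{Y/B} \oplus p^*V$ coming from the Euler sequence $0\to p^*V\to T_{V/B}\to p^*T_{Y/B}\to 0$, and invoking Lemma~\ref{lem:PushforwardPurity} and Remark~\ref{rem:Pushforward}, this rewrites $(s/B_\BM)^*$ as $\pi_{Y\sharp}\Sigma^{-T_{Y/B}}$ applied to $p_\sharp$ of the Thom collapse map $1_V \to \Sigma^{-p^*V}s_*\Sigma^V 1_Y$.

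Combining these two descriptions, the shared $\Sigma^{-T_{Y/B}}$-twist (contributed on the right by $L = p^*\omega_{Y/B}^{-1}$ and on the left by the $T_{Y/B}$ summand of $T_{V/B}$) cancels by the multiplicativity of $\vartheta^\sE_{(-)}$ from Remark~\ref{rem:MultThom}. What remains is, tautologically, the fact that the canonical Thom class $\th_V$ represents the Thom isomorphism $\vartheta^\sE_V$ built from the adjunction $p_\sharp\dashv p^*$ and the $\SL$-orientation datum. The ``in particular'' consequence concerning the forget-supports map is then automatic: both $s_*$ and $\th_V$ are by construction defined in supported cohomology, and the forget-supports map is a natural transformation, so the unsupported equality follows from the supported one.

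The main obstacle lies in the second paragraph: matching the canonical trivializations of $\omega_{V/B}\otimes L$, $\omega_{Y/B}\otimes s^*L$, and $\det V\otimes\det^{-1}V$ against the splitting of $\Sigma^{-T_{V/B}}$ induced by the Euler sequence, so that the two occurrences of $\vartheta^\sE_{(-)}$ line up without sign or twist discrepancies. This is essentially the same coherence check one runs when verifying compatibility of Thom isomorphisms with base change, and it should go through cleanly once set up carefully using Remark~\ref{rem:MultThom} and the functoriality of $\Sigma^{(-)}$ on $\sK(V)$.
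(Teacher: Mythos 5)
Your proposal follows essentially the same route as the paper's proof: reduce via the defining diagram to the supported identity, use Lemma~\ref{lem:PushforwardPurity} and Remark~\ref{rmk:BMFunctor} to identify $(s^*)^*$ with the purity isomorphism, and then cancel the common $T_{Y/B}$-contribution via the Euler sequence $0\to p^*V\to T_{V/B}\to p^*T_{Y/B}\to 0$, the adjunction $p_\sharp\dashv p^*$, and the multiplicativity of $\vartheta^\sE_{(-)}$ from Remark~\ref{rem:MultThom}, leaving the tautological statement that $\th_V$ induces $\vartheta_V$. The coherence check you flag at the end is precisely the commutativity of the paper's diagram \eqref{eqn:CommDiagr}, which is verified there with exactly the tools you name.
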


\begin{proof}
  The exact sequence
  \[
    0\to p^*V\to T_{V/B}\xr{dp}p^*T_{Y/B}\to 0
  \]
  gives us the isomorphism  
  \[
    \omega_{V/B}^{-1}\otimes\det^{-1} V  \simeq  p^*\omega_{Y/B}^{-1}.
  \]
  Keeping this in mind,  we have the following commutative diagram defining $s_*$:
  \begin{equation}\label{eqn:Compat1}
    \xymatrix{
      \sE^{-2d_Y,-d_Y}(Y/B_\BM; \omega_{Y/B}^{-1})\ar[r]^-{(s^*)^*}&\sE^{-2d_Y,-d_Y}_{0_V}(V/B_\BM,  p^*\omega_{Y/B}^{-1})\\
      \sE^{0,0}_Y(Y)\ar[u]^{\rho_{Y,Y, s^*p^*\omega_{Y/B}^{-1}}}\ar[r]_{s_*}\ar[rd]_{s_*}&
      \sE^{2r,r}_{0_V}(V, \det^{-1} V)\ar[u]_{\rho_{V, 0_V, p^*\omega_{Y/B}^{-1}}}\ar[d]^{\vbox{\tiny \noindent forget\\supports}}\\
      &\sE^{2r,r}(V,\det^{-1} V )
    }
  \end{equation}
  Here the lower $s_*$ is the one we are considering and the upper $s_*$ is the map with supports. Thus, we need to show that the upper $s_*$ satisfies $s_*(1_Y^\sE)=\th_V$.
  
  We will be using the isomorphisms
  \begin{align}\label{align:Isos}
    \\\notag
 &\sE^{-2d_Y,-d_Y}(Y/B_\BM; \omega_{Y/B}^{-1})\simeq \Hom_{\SH(Y)}(1_Y, \Sigma^{1-d_Y+T_{Y/B}-\omega_{Y/B}^{-1}}\pi_Y^*\sE),\\\notag
 &\sE^{-2d_Y,-d_Y}_{0_V}(V/B_\BM,  p^*\omega_{Y/B}^{-1})\simeq
   \Hom_{\SH(V)}(s_*(1_Y), \Sigma^{1-d_Y+T_{V/B}- p^*\omega_{Y/B}^{-1}}\pi_V^*\sE),\\\notag
 &\sE^{2r,r}_{0_V}(V,\det^{-1} V )\simeq \Hom_{\SH(V)}(s_*(1_V), \Sigma^{r+1-\det^{-1}V}\pi_V^*\sE).
  \end{align}

By Lemma~\ref{lem:PushforwardPurity}, the composition
 \[
 \pi_{Y!}\xr{\sim}\pi_{V!}\circ s_!\xr{\pi_{V!}\alpha}\pi_{V!}\circ s_*\xr{s^*}\pi_{Y!}
 \]
is the identity. Evaluating at $1_Y$ gives the commutative diagram
\begin{equation}\label{eqn:Compat2} 
\xymatrix{
Y_\BM\ar[r]^-{\phi}_--\sim\ar@{=}[d]&\pi_{V!}(s_*(1_Y))\ar[dl]^{s^*}\ar@{=}[r]&V_{0_V}/B_{\BM}\\
Y_\BM}
\end{equation}
and the isomorphisms $\phi$ induces the isomorphism
\[
\phi^*:\sE^{-2d_Y, -d_Y}_{0_V}(V/B_\BM, p^*\omega_{Y/B}^{-1})\to
\sE^{-2d_Y, -d_Y}(Y_\BM, \omega_{Y/B}^{-1})
\]

The isomorphism $\rho_{V, 0_V, p^*\omega_{Y/B}^{-1}}$ is the map induced on $\Hom_{\SH(V)}(s_*(1_Y),-)$ by the isomorphism
\[
\vartheta_{p^*\omega^{-1}_{Y/B}- T_{V/B}}:\Sigma^{1-\det^{-1} V}\pi_V^*\sE\to \Sigma^{1-d_V+T_{V/B}-p^*\omega^{-1}_Y/B}\pi_V^*\sE.
\]
We have the isomorphisms
\[
\Sigma^{r-V}\vartheta_{-V}: \Sigma^{r+1-V-\det^{-1}V}\pi_Y^*\sE\to \pi_Y^*\sE
\]
\[
\Sigma^{r-V}\vartheta_{\omega^{-1}_{Y/B}-T_{Y/B}-V}:
\Sigma^{r+1-V-\det^{-1}V}\pi_Y^*\sE
 \to \Sigma^{r+1-d_Y+T_{Y/B}-\omega_{Y/B}}\pi_Y^*\sE
\]
and
\[
\vartheta_{\omega_{Y/B}-T_{T/B}}:\pi_Y^*\sE\to \Sigma^{1-d_Y+T_{Y/B}-\omega_{Y/B}}\pi_Y^*\sE.
\]
The first one induces an isomorphism
\[
\rho_{-V}:\Hom_{\SH(Y)}(1_Y, \Sigma^{r+1-V-\det^{-1}V}\pi_Y^*\sE)\to \sE^{0,0}(Y)\
\]
the second   an isomorphism
\[
\rho_{T_{Y/B}+V-\omega_{Y/B}^{-1}}:\Hom_{\SH(Y)}(1_Y, \Sigma^{r+1-V-\det^{-1}V}\pi_Y^*\sE)\to
\sE^{-2d_Y, -d_Y}(Y/B_\BM, \omega^{-1}_{Y/B})
\]
while the third induces the isomorphism 
\[
\rho_{Y,Y, \omega_{Y/B}^{-1}}:\sE^{0,0}(Y)\to \sE^{-2d_TY, -d_Y}(Y/B_\BM, \omega^{-1}_{Y/B})
\]

Altogether these maps and isomorphisms gives the diagram of isomorphisms
\begin{equation}\label{eqn:CommDiagr}
\xymatrixcolsep{5pt}
\xymatrix{
\sE^{-2d_Y, -d_Y}_{0_V}(V/B_\BM, p^*\omega_{Y/B}^{-1})\ar[r]^{\phi^*}& \sE^{-2d_Y, -d_Y}(Y/B_\BM, \omega^{-1}_{Y/B})\\
&&\sE^{0,0}(Y)\ar[ul]_-{\rho_{Y,Y,\omega_{Y/B}^{-1}}}\\
\sE_{0_V}^{2r,r}(V, \det^{-1}V)\ar[uu]^{\rho_{V, 0_V, \det^{-1}V}}\ar[r]_-\psi&
\Hom_{\SH(Y)}(1_Y, \Sigma^{r+1-V-\det^{-1}V}\pi_Y^*\sE)\ar[uu]^{\rho_{T_{Y/B}+V-\omega_{Y/B}}}
\ar[ur]_-{\rho_{-V}}}
\end{equation}
The  triangle commutes by the functoriality of $\vartheta_{-}$, as expressed by Remark~\ref{rem:MultThom}. 

To see that   square commutes, we have the diagram
\[
\xymatrixcolsep{10pt}
\xymatrix{
\Sigma^{1-d+T_V-p^*\omega_{Y/B}^{-1}}\pi_V^*\sE\ar[r]^-\sim&
\Sigma^{1-d+p^*(T_Y+V-\omega_{Y/B}^{-1})}\pi_V^*\sE\ar[r]^-\sim&
p^*\Sigma^{1-d+T_Y+V-\omega_{Y/B}^{-1}}\pi_Y^*\sE\\
\Sigma^{r+1-p^*\det^{-1}V}\pi_V^*\sE\ar@{=}[r]\ar[u]_{\Sigma^r_{\T}\vartheta_{p^*\omega_{Y/B}^{-1}-T_{V/B}}}&
\Sigma^{r+1-p^*\det^{-1}V}\pi_V^*\sE\ar[r]_\sim\ar[u]_{\Sigma^r_{\T}\vartheta_{p^*(\omega_{Y/B}^{-1}-T_{Y/B}-V)}}&p^*\Sigma^{r+1-\det^{-1}V}\pi_Y^*\sE\ar[u]_{p^*\Sigma^r_{\T}\vartheta_{\omega_{Y/B}^{-1}-T_{Y/B}-V}}
}
\]
The first square commutes using the exact sequence $0\to p^*V\to T_{V/B}\to p^*T_{Y/B}\to0$ and the second by the naturality of $\vartheta_{-}$.   Applying the adjunction $p_\#\dashv p^*$, the identity $p_\#s^*=\Sigma^V$ and applying $\Sigma^{-V}$ to yield the isomorphism $[\Sigma^Vx, y]_{\SH(Y)}\simeq [x,\Sigma^Vy]_{\SH(Y)}$, applying $\Hom_{\SH(V)}(s_*(1_Y), -)$ to the last map gives the commutative square
\[
\xymatrix{
\Hom_{\SH(V)}(s_*(1_Y), p^*\Sigma^{1-d+T_Y+V-\omega_{Y/B}^{-1}}\pi_Y^*\sE)\ar[dr]^\sim\\
&\kern-50pt\Hom_{\SH(Y)}(1_Y, \Sigma^{1-d+T_Y-\omega_{Y/B}^{-1}}\pi_Y^*\sE)\\
\Hom_{\SH(V)}(s_*(1_Y), p^*\Sigma^{r+1-\det^{-1}V}\pi_Y^*\sE)\ar[uu]^{p^*\Sigma^r_{\P^1}\vartheta_{\omega_{Y/B}^{-1}-T_{Y/B}-V*}}\ar[rd]_\sim\\
&\kern-50pt
\Hom_{\SH(Y)}(1_Y,\Sigma^{r+1-V-\det^{-1}V}\pi_Y^*\sE)
\ar[uu]_{\Sigma^{r-V}\vartheta_{\omega_{Y/B}^{-1}-T_{Y/B}-V*}}
}
\]
Applying $\Hom_{\SH(V)}(s_*(1_Y), -)$ to the first diagram, putting these two diagrams together and using the isomorphisms \eqref{align:Isos} and Remark~\ref{rem:Pushforward}  gives the commutativity of the square in \eqref{eqn:CommDiagr}.

It follows from the commutativity of  \eqref{eqn:Compat2} that $\phi^*\circ(s^*)^*\circ \rho_{Y,Y,\omega_{Y/B}^{-1}}=\rho_{Y,Y,\omega_{Y/B}^{-1}}$. The commutativity of \eqref{eqn:Compat1} and \eqref{eqn:CommDiagr} then shows that $\rho_{-V}\circ\psi\circ s_*=\id$.  By Remark~\ref{rem:CanonicalThom}, and Remark~\ref{rem:MultThom} the map $\rho_{-V}\circ\psi$ is the inverse of the canonical Thom isomorphism $\vartheta_V:\sE^{0,0}(Y)\to \sE^{2r, r}_{0_V}(V, \det^{-1}V)$. Thus $s_*=\vartheta_V$ so $s_*(1^\sE_Y)=\th_V$.\qedhere \end{proof}

\begin{remark}
  \label{rem:Oriented2}
  If we have a $\GL$-orientation on $\sE$, we have functorial pushforward maps
  \[
    f_*:\sE^{a,b}_W(Y)\to \sE^{a-2d, b-d}_Z(X)
  \]
  for $f:Y\to X$ a projective morphism in $\Sm_B$, of relative dimension $d$, with $W\subset Y$, $Z\subset X$ closed subsets with $f(W)\subset Z$. All the results of this section hold in the oriented context after deleting the twist by line bundles. This follows from Remark~\ref{rem:Oriented}.
\end{remark}

\section{Motivic Gau{\ss}-Bonnet}
\label{sec:GaussBonnet}

\begin{definition}
  Let $p:V\to X$ be a rank $r$ vector bundle on some $X\in \Sm_B$, and let $\sE\in \SH(B)$ be an $\SL$-oriented motivic ring spectrum. The {\em Euler class} $e^\sE(V)\in \sE^{2r,r}(X,\det^{-1}V)$ is defined as
  \[
    e^\sE(V):=s^*s_*(1^\sE_X);\quad 1^\sE_X\in \sE^{0,0}(X)\text{ the unit.}
  \]
\end{definition}

\begin{remark}\label{rem:EulerThom} 
By Lemma~\ref{lem:InclThomIso}, $e^\sE(V):=s^*s_*(1^\sE_X)=\bar{s}^*\th_V$, where $\bar{s}:X\to \Th_X(V)$ is the map induced by $s$.
\end{remark}

\begin{theorem}[Motivic Gau{\ss}-Bonnet]\label{thm:GaussBonnet}
  Let $\sE\in\SH(B)$ be an $\SL$-oriented motivic ring spectrum, $\pi_X:X\to B$ a  smooth and projective $B$-scheme, let $u_\sE:1_B\to \sE$ be the unit map.  Then 
  \[
    \pi_{X/B*}(e^\sE(T_{X/B}))=u_{\sE*}(\chi(X/B))\in \sE^{0,0}(B).
  \]
\end{theorem}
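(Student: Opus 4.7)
The plan is to combine Lemma~\ref{lem:simp}, which rewrites $\chi(X/B)$ as a composition involving $\beta_{X/B}$, with the definition of the $\SL$-oriented pushforward from Section~\ref{sec:Push}, reducing the identity to a formal compatibility between the canonical Thom class and the zero section under the Thom isomorphism.

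First, Lemma~\ref{lem:simp} gives
\[
u_{\sE*}(\chi(X/B)) = u_\sE\circ\pi_{X*}\circ\beta_{X/B}\circ\pi_X^* : 1_B \to \sE,
\]
where $\pi_X^* : 1_B = B/B_\BM \to X/B_\BM = \Th(-T_{X/B})$ is the Borel--Moore functoriality map for the proper morphism $\pi_X$. By the definition of $\pi_{X*}$ on twisted cohomology in terms of the isomorphism $\rho_X$ from Lemma~\ref{lem:Duality}, the class $\pi_{X*}(e^\sE(T_{X/B}))\in\sE^{0,0}(B)$ equals the composition
\[
1_B \xrightarrow{\pi_X^*} \Th(-T_{X/B}) \xrightarrow{\rho_X(e^\sE(T_{X/B}))} \sE.
\]
Both expressions begin with the same Borel--Moore functoriality map $\pi_X^*$, so the theorem reduces to the equality
\[
\rho_X(e^\sE(T_{X/B})) = u_\sE\circ\pi_{X*}\circ\beta_{X/B}
\]
in $\Hom_{\SH(B)}(\Th(-T_{X/B}), \sE)$.

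To prove this equality I would transpose both sides across the adjunction $\pi_{X\sharp}\dashv\pi_X^*$, converting them into morphisms $\Sigma^{-T_{X/B}}(1_X) \to \pi_X^*\sE$ in $\SH(X)$. Since $\beta_{X/B}=\pi_{X\sharp}(\tilde\beta_{X/B})$ with $\tilde\beta_{X/B}$ as in Construction~\ref{const:beta}, and $\pi_{X*} \colon \pi_{X\sharp}(1_X) \to 1_B$ transposes to the identity on $1_X$, the right-hand side transposes to
\[
\pi_X^*(u_\sE)\circ\tilde\beta_{X/B} = \pi_X^*(u_\sE)\circ\Sigma^{-T_{X/B}}(\bar{s}_{T_{X/B}})
\]
after using the canonical identification $\Sigma^{-T_{X/B}}\Th_X(T_{X/B}) \simeq 1_X$. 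For the left-hand side, Remark~\ref{rem:EulerThom} gives $e^\sE(T_{X/B}) = \bar{s}^*\th_{T_{X/B}}$: viewed in $\SH(X)$, this is the composite of $\bar{s}_{T_{X/B}} : 1_X \to \Th_X(T_{X/B})$ with the map out of $\Th_X(T_{X/B})$ representing the canonical Thom class $\th_{T_{X/B}}$. Unwinding the definition of $\rho_X$ through the Thom isomorphism $\vartheta^\sE_{-T_{X/B}}$ and invoking the defining property that under the oriented Thom isomorphism of Lemma~\ref{lem:ThomIso1} the canonical Thom class corresponds to the unit, the transposed left-hand side also becomes $\pi_X^*(u_\sE)\circ\Sigma^{-T_{X/B}}(\bar{s}_{T_{X/B}})$, completing the comparison.

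The main obstacle lies in this last identification: one must track the several suspension twists and canonical trivializations entering Construction~\ref{const:CanonThom} and the definition of $\vartheta^\sE_{-T_{X/B}}$ to confirm that inverting the Thom isomorphism on the map representing $\th_{T_{X/B}}$ yields precisely $\pi_X^*(u_\sE)$. This is essentially formal bookkeeping, resting on the multiplicativity of the canonical Thom classes (Remark~\ref{rem:MultThom}) and the Thom-class normalization from Lemma~\ref{lem:ThomIso1}, but demands some care with the suspension shifts.
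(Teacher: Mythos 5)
Your proposal is correct and follows essentially the same route as the paper: reduce via Lemma~\ref{lem:simp} and the definition of the pushforward through $\rho$ to comparing two maps out of $\Th(-T_{X/B})$, transpose across $\pi_{X\sharp}\dashv\pi_X^*$, and then use Remark~\ref{rem:EulerThom}, the description of $\beta_{X/B}$ via $\Sigma^{-T_{X/B}}(\bar{s}_{T_{X/B}})$, and the naturality of the canonical Thom isomorphism together with the fact that it carries the unit to the canonical Thom class. The "bookkeeping" you defer at the end is exactly what the paper's two commutative diagrams in the proof of Theorem~\ref{thm:GaussBonnet} carry out.
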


\begin{proof}

  We have the canonical Thom isomorphism
  \[
    \vartheta^\sE_{-T_{X/B}}: \sE^{a,b}(X;\omega_{X/B})\to \sE^{a-2\dim X, b-\dim X}(\Th(-T_{X/B})).
  \]
  By Lemma~\ref{lem:simp}, it suffices to show that the map
  \[
    \beta_{X/B}^*:\sE^{0,0}(X)\to \sE^{0,0}(\Th(-T_{X/B}))
  \]
  sends $1^\sE_X$ to $\vartheta^\sE_{-T_{X/B}}(e^\sE(T_{X/B}))$; by Remark~\ref{rem:EulerThom}, this is the same as $\vartheta_{-T_{X/B}}(\bar{s}^*\th_{T_{X/B}})$, where $\bar{s}:X_+\to \Th(T_{X/B})$ is the map induced by the zero-section  $s:X\to T_{X/B}$.

We use our description  of $\beta_{X/B}$ as $\pi_{X\#}$ applied to the composition \eqref{const:betaComp2}. Applying $\Hom_{\SH(X)}(-,\pi_X^*\sE)$ to $\beta_{X/B}$ and using the adjunction $\Hom_{\SH(B)}(\pi_{X\#}(-), \sE)\simeq  \Hom_{\SH(X)}(-,\pi_X^*\sE)$, $ \beta_{X/B}^*$ is given by the composition
\begin{multline*}
\sE^{0,0}(X)\xymatrix{\ar[r]^a_\sim&} \Hom_{\SH(X)}(1_X, \pi_X^*\sE)\xymatrix{\ar[r]^b_\sim&}
\Hom_{\SH(X)}(\Sigma^{-T_{X/B}}\circ\Sigma^{T_{X/B}}(1_X), \pi_X^*\sE)
\\\xymatrix{\ar[r]^c_\sim&}
\Hom_{\SH(X)}(\Th_X(T_{X/B})), \Sigma^{T_{X/B}}\pi_X^*\sE)
\\
\xr{\bar{s}^*}
\Hom_{\SH(X)}(1_X, \Sigma^{T_{X/B}}\pi_X^*\sE)
\simeq
\Hom_{\SH(X)}(\Sigma^{-T_{X/B}}(\Th_X(T_{X/B})),\pi_X^*\sE)
\end{multline*}
where the isomorphisms $a,b, c$ are the canonical ones.

The functoriality of the canonical Thom isomorphisms gives us the commutative diagram 
\[
\xymatrix{
\sE^{0,0}(X)\ar[r]^-{\vartheta_{T_{X/B}}}\ar[d]^a_\wr&\sE^{2d_X, d_X}(\Th(T_{X/B}), \omega_{X/B})\ar[dd]^{\vartheta^\sE_{-T_{X/B}}}\\
\Hom_{\SH(X)}(1_X, \pi_X^*\sE)\ar[d]^b_\wr&\\
\Hom_{\SH(X)}(\Sigma^{-T_{X/B}}\Sigma^{T_{X/B}}(1_X),\pi_X^*\sE)\ar[r]_-c^-\sim&
\Hom_{\SH(X)}(\Th_X(T_{X/B}),\Sigma^{T_{X/B}}\pi_X^*\sE)
}
\]
  Thus 
\[
(c\circ b\circ a)(1^\sE_X)= \vartheta^\sE_{-T_{X/B}}(\th_{T_{X/B}}).
\]

Applying $\vartheta^\sE_{-T_{X/B}}$ as above gives us the commutative diagram
\[
\xymatrixcolsep{40pt}
\xymatrix{
\sE^{2d_X, d_X}(\Th(T_{X/B}), \omega_{X/B}) \ar[r]^-{\bar{s}^*}
\ar[d]_{\vartheta^\sE_{-T_{X/B}}}
&\sE^{2d_X, d_X}(X, \omega_{X/B})\ar[d]^{\vartheta^\sE_{-T_{X/B}}}\\
\Hom_{\SH(X)}(\Th_X(T_{X/B}), \Sigma^{T_{X/B}}\pi_X^*\sE)
\ar[r]_-{\bar{s}^*}\ar[d]_\wr
&\Hom_{\SH(X)}(1_X, \Sigma^{T_{X/B}}\pi_X^*\sE)\ar[d]^\wr\\
\Hom_{\SH(X)}(\Sigma^{-T_{X/B}}\Th_X(T_{X/B}), \pi_X^*\sE)
\ar[r]_-{\Sigma^{-T_{X/B}}(\bar{s}^*)} 
&\Hom_{\SH(X)}(\Sigma^{-T_{X/B}}(1_X), \pi_X^*\sE)
}
\]
and thus  
\[
\beta_{X/B}^*(1^\sE_X)=\vartheta_{-T_{X/B}}(\bar{s}^*(\th_{T_{X/B}}))=\vartheta_{-T_{X/B}}(e^\sE(T_{X/B})), 
\]
as desired.
\end{proof}

\section{$\SL$-oriented cohomology theories}
\label{sec:SLOrCohThy}

Our ultimate goal is to apply the Gau{\ss}-Bonnet theorem of \S\ref{sec:GaussBonnet} when projective pushforwards are defined on a representable cohomology theory in some concrete manner, not necessarily  relying on the six-functor formalism. For this, we need a suitable axiomatization for such theories; we use a modification of  the axioms of Panin-Smirnov \cite{PaninII, PaninI}. As before, our base-scheme $B$ is a noetherian, separated scheme of finite Krull dimension.

\begin{definition}
  We let $\SmL_B$ denote the category of triples $(X, Z, L)$ with $X$ in $\Sm_B$, $Z\subset X$ a closed subset and $L\to X$ a line bundle. A morphism $(f,\tilde{f}):(X, Z, L)\to (Y, W, M)$ is a morphism $f:X\to Y$ with $Z\supset f^{-1}(W)$, together with an  isomorphism of line bundles $\tilde{f}:L\to f^*M$.  We let $\SmL^\pr_B$ denote the category with the same objects as   $\SmL_B$, but with morphisms $(f; \tilde{f}):(X,Z, L)\to (Y, W, M)$ a proper morphism $f:X\to Y$ in $\Sm_B$, with $f(Z)\subset W$, and $\tilde{f}:L\to f^*M$ an isomorphism of line bundles.
\end{definition}

\begin{definition}
  An {\em $\SL$-oriented cohomology theory} on $\Sm_B$ consists of the following data:
  \begin{enumerate}[label=(D\arabic*)]
  \item A functor $H^{*,*}:\SmL_B^\op\to \BiGr\Ab$, $(X, Z, L)\mapsto H^{*,*}_Z(X;L)$; we often write $f^*$ for $H^{*,*}(f,\tilde{f})$.
  \item A functor $H_{*,*}:\SmL^\pr_B\to \Gr\Ab$, $(X, Z, L)\mapsto H_{*,*}^Z(X,L)$; we often write $f_*$ for $H_{*,*}(f,\tilde{f})$.
  \item Natural isomorphisms, for $X$ of dimension $d_X$
    \[
      H^{2d_X-n, d_X-m}_Z(X,\omega_{X/B}\otimes L)\xr{\alpha_{X,Z,L}} H_{n,m}^Z(X, L).
    \]
  \item An element $1\in H^{0,0}_B(B;\sO_B)$. For $x:=(X, Z, L), y:=(Y, W, M)$ in  $\SmL_B$, a bigraded cup product map
    \[
      \cup_{x,y}:H^{*,*}_Z(X,L)\otimes H^{*,*}_W(Y,M)\to H^{*,*}_{Z\times W}(X\times_BY, p_1^*L\otimes p_2^*M)
    \]
  \item For $Z\subset W$ closed subsets of an $X\in \Sm_B$ a bigraded boundary map
    \[
      \delta_{X, W, Z}^{*,*}:H^{*, *}_{Z\setminus W}(X\setminus W;j_W^*L)\to H^{*+1,*}_W(X, L)
    \]
    We write $H^{*,*}(X,L)$ for $H^{*,*}_X(X,L)$ and $H^{*,*}_Z(X)$ for   $H^{*,*}_Z(X,\sO_X)$; we use the analogous notation for $H_{*,*}$. We write $\cup$ for $\cup_{x,y}$ and $\delta$ for $\delta_{X,Z,L}$ when the context makes the meaning clear.
  \end{enumerate}
  
  For $f:Y\to X$ a proper map of relative dimension $d$ in $\Sm_B$, with $Z\subset X$, $W\subset Y$  closed subsets with $f(W)\subset Z$ and $L\to X$ a line bundle, combining D2 and D3 gives us pushforward maps
  \[
    f_*:H^{*, *}_W(Y, \omega_{Y/B}\otimes f^*L)\to H^{*-2d, *-d}_Z(X, \omega_{X/B}\otimes L)
  \]
  defined as the composition
  \begin{multline*}
    H^{*, *}_W(Y, \omega_{Y/B}\otimes f^*L)\xr{\alpha_{Y,W,f^*L}^{-1}}
    H_{2d_Y-*, d_Y-*}^W(Y,   f^*L)\\\xr{f_*}H_{2d_Y-*, d_Y-*}^Z(X,   L)
    \xr{\alpha_{X,Z,L}}H^{*-2d, *-d}_Z(X, \omega_{X/B}\otimes L).
  \end{multline*}

  These data are required to satisfy the following axioms:
  \begin{enumerate}[label=(A\arabic*)]
  \item $H^{*,*}$ and $H_{*,*}$ are additive: $H^{*,*}$ transforms disjoint unions to products and $H_{*,*}$ transforms disjoint unions to coproducts; in particular, $H^{*,*}_Z(\0,L)=0$ and  $H_{*,*}^Z(\0, L)=0$.
  \item Let 
    \[
      \xymatrix{
        Y'\ar[d]_{f'}\ar[r]^-{g'}&Y\ar[d]^f\\
        X'\ar[r]_-g&X
      }
    \]
    be a cartesian diagram in $\Sch_B$, with $X, Y, X', Y'$ in $\Sm_B$ (sometimes called a {\em transverse} cartesian diagram in $\Sm_B$) and with $f, f'$ proper of relative dimension $d$. This gives us the isomorphism
    \[
      f^{\prime*}\omega_{X'/X}\simeq \omega_{Y'/Y}.
    \]
    Let $Z\subset X$ be a closed subset, let $W\subset Y$ be a closed subset with $f(W)\subset Z$, let $Z'=g^{-1}(Z)$, $W'=g^{\prime-1}(W)$. Let $L\to X$ be a line bundle on $X$ and let $L'=g^{\prime*}(L)$. Then the diagram
    \[
      \xymatrix{
        H^{*, *}_{W'}(Y', \omega_{Y'/B}\otimes\omega_{Y'/Y}^{-1}\otimes g^{\prime*} L')\ar[d]^{  f'_*}&\ar[l]_-{g^{\prime*}}H^{*, *}_W(Y, \omega_{Y/B}\otimes f^*L)\ar[d]^{ f_*}\\
        H^{*-2d, *-d}_{Z'}(X', \omega_{X'/B}\otimes\omega_{X'/X}^{-1}\otimes  L')&\ar[l]^-{g^*}H^{*-2d, *-d}_Z(X, \omega_{X/B}\otimes L)
      }
    \]
    commutes.
  \item For $Z\subset W$ closed subsets of an $X\in \Sm_B$,  let $U=X\setminus Z$ with inclusion $j:U\to X$.  For $L\to X$ a line bundle, this gives us the morphisms $(\id, \id):(X, W, L)\to (X, Z, L)$ and $(j, \id):(U, W\setminus Z, j^*L)\to (X, W, L)$. Then the sequence
    \begin{multline*}
      \ldots\xr{\delta_{Z, W, X}}H^{*, *}_Z(X, L)\to H^{*,*}_W(X, L)\\
      \xr{j^*} H^{*,*}_{W\setminus Z}(U, j^*L)\xr{\delta_{Z, W, X}}H^{*+1, *}_Z(X, L)\to\ldots
    \end{multline*}
    is exact.  Moreover,  the maps $\delta_{Z, W, X}$ are natural with respect to the pullback maps $g^*$ and the proper pushforward maps $f_*$.
  \item Let $i:Y\to X$ be a closed immersion in $\Sm_B$, let $W\subset Y$ be a closed subset, $L\to X$ a line bundle. Let $Z=i(W)$, giving the morphism $(i, \id):(Y, W, i^*L)\to (X, Z, L)$ in  $\SmL^\pr_B$. Then
    \[
      i_*:H_{*,*}^W(Y, i^*L)\to H_{*,*}^Z(X, L)
    \]
    is an isomorphism.
  \item The cup products $\cup$ of D4 are associative with unit 1. The maps $f^*$ and $f_*$ are compatible with cup products: $(f\times g)^*(\alpha\cup_{x,y}\beta)=f^*(\alpha)\cup_{x,y} g^*(\beta)$. Moreover, using the isomorphisms  of D3, the cup products induce products $\cup^{x,y}$ on $H_{*,*}$  and one has $(f\times g)_*(\alpha\cup^{x,y}\beta)=f_*(\alpha)\cup^{x,y} g_*(\beta)$. Finally, the boundary maps $\delta_{Z, W, X}$ are module morphism: retaining the notation of D4,  for $\alpha\in H^{*, *}_{Z\setminus W}(X\setminus W;j_W^*L)$ and $\beta\in H^{*, *}_{T}(Y, M)$, we have
    \[
      \delta_{X\times Y, Z\times T, W\times T}(\alpha\cup  \beta)=
      \delta_{X, Z, W}(\alpha)\cup \beta.
    \]
  \item Let $i:Y\to X$ be a closed immersion in $\Sm_B$ of codimension $c$, $\pi_Y:Y\to B$ the structure map.  Let $1^H_Y\in H^{0,0}(Y)$ be the element $\pi_Y^*(1)$. Then $\vartheta(i):=\alpha_{X, Y}(i_*(1^H_Y))\in H^{2c, c}_Y(X, \det^{-1} N_i)$ is {\em central}, that is, for each $(U, T, M)\in \SmL_B$, and each $\beta\in H^{*,*}_T(U, M)$, we have 
    \[
      \tau^*(\beta\cup\vartheta(i))=\vartheta(i)\cup\beta
    \]
    where $\tau:X\times_BU\to U\times_BX$ is the symmetry isomorphism.
  \item Let $(f,\id):(Y, W, f^*L)\to (X, Z, L)$ be a morphism in $\SmL_B$. Suppose that the induced map $f:Y_W/B\to X_Z/B$ is an isomorphism in $\SH(B)$. Then 
    \[
      f^*:H^{*,*}_Z(X, L)\to H^{*,*}_W(Y, f^*L)
    \]
    is an isomorphism.
  \end{enumerate}
\end{definition}

\begin{remark} It may seem strange that the proper pushforward maps respect products in the sense of (A5); one might rather expect a projection formula. However, (A5) asks that the proper pushforward maps respect {\em external} products, not cup products, and in fact, having the 
pushforward and pullback maps respect products as in (A5) implies the projection formula, as one sees by considering the commutative pentagon associated to a proper morphism $f:Y\to X$ in $\Sm_B$ of relative dimension $d$:
\[
\xymatrix{
Y\ar[dd]_f\ar@/^40pt/[drr]^{\gamma_f:=(f\times\id_X)\circ\Delta_Y}\ar[r]_-{\Delta_Y}&Y\times_BY\ar[dr]_{f\times \id_Y}\\
&&X\times_BY\ar[dl]^{\id_X\times f}\\
X\ar[r]_-{\Delta_X}&X\times_BX
}
\]
Note that the square
\[
\xymatrix{
Y\ar[r]^-{\gamma_f}\ar[d]_f&X\times_BY\ar[d]^{\id_X\times f}\\
X\ar[r]_-{\Delta_X}&X\times_BX
}
\]
 is transverse cartesian. 
If we have closed subsets $Z\subset X$, $W\subset Y$ with $f(W)\subset Z$, and line bundle $L\to X$, the pentagon diagram induces the diagram in cohomology
\[
\xymatrixcolsep{15pt}
\xymatrix{
H^{*,*}_W(Y,\omega_{Y/B}\otimes f^*L)Y\ar[dd]_{f_*}&\ar[l]^-{\Delta_Y^*}
H^{*,*}_{W\times W}(Y\times_BY, \omega_{Y/B}\otimes f^*L)\\\
&&\kern-43pt H^{*,*}_{Z\times W}(X\times_BY, \omega_{Y/B}\boxtimes L)\ar@/_55pt/[ull]_{\gamma_f^*}
\ar[ul]_{\ \ (f\times \id_Y)^*}\ar[dl]^{\ \ (\id_X\times f)_*}\\
H^{*-2d, *-d}_Z(X, \omega_{X/B}\otimes L)&H^{*-2d, *-d}_{Z\times Z}(X\times_BX, \omega_{X/B}\otimes L)\ar[l]^-{\Delta^*_X}
}
\]
Take $\alpha\in H^{a,b}_Z(X, M)$, $\beta\in H^{c,d}_W(Y, \omega_{Y/B}\otimes f^*(L\otimes M^{-1}))$. By functoriality of $(-)^*$ and (A5) for $(-)^*$ we have $\gamma_f^*(\alpha\cup_{X,Y} \beta)=f^*(\alpha)\cup_Y\beta$ and by (A2) and (A5) for $(-)_*$ we have
\[
f_*(f^*(\alpha)\cup_Y\beta)=\Delta_X^*(\id_X\times f)_*(\alpha\cup_{X,Y}\beta)
=\alpha\cup_X f_*(\beta).
\]
Similarly, in the presence of (A2) and (A5) for $(-)^*$, functoriality for $(-)^*$ and $(-)_*$ and the projection formula implies (A5) for $(-)_*$.
\end{remark}

\begin{definition}
  A  {\em  twisted cohomology theory} on $\Sm_B$ is given by the data D1,  D4, D5 above, satisfying the parts of the axioms A1, A3-A7 that only involve $H^{*,*}$. Given an $\SL$-oriented cohomology theory $(H^{*,*}, H_{*,*}, \ldots)$ on $\Sm_B$, one has the underlying twisted cohomology theory $(H^{*,*}, \ldots)$ by forgetting the proper pushforward maps.
\end{definition}

\begin{example}
  The primary example of an $\SL$-oriented cohomology theory on $\Sm_B$ is the one induced by an $\SL$-oriented motivic spectrum $\sE\in \SH(B)$:
  \[
    (X,Z,L)\mapsto \sE^{*,*}_Z(X;L).
  \]
  One defines, for $X\in \Sm_B$ of dimension $d_X$ over $B$,
  \[
    \sE_{m,n}^Z(X;L):=\sE^{2d_X-m, d_X-n}_Z(X;\omega_{X/B}\otimes L);
  \]
  we extend the definition to arbitrary $X\in \Sm_B$ by taking the sum over the connected components of $X$ and write this also as $\sE^{2d_X-m, d_X-n}_Z(X;\omega_{X/B}\otimes L)$ by considering $d_X$ as a locally constant functor on $X$.

  The pushforward maps for a proper morphism of relative dimension $d$, $f:Y\to X$, closed subsets $W\subset Y$, $Z\subset X$ with $f(W)\subset Z$ and line bundle $L\to X$ are given by the pushforward
  \[
    f_*:\sE^{2d_Y-m,2d_Y-n}_W(Y;\omega_{Y/B}\otimes f^*L)\to 
    \sE^{2d_X-m,2d_X-n}_Z(X;\omega_{X/B}\otimes L).
  \]
\end{example}

\section{Comparison isomorphisms}
\label{sec:Uniq}

We recall the element $\eta\in \Hom_{\SH(B)}(1_B, \rS^{-1,-1}\wedge1_B)$ induced by the map of $B$-schemes $\eta:\A^2\setminus\{0\}\to \P^1$, $\eta(a,b)=(a:b)$. As every $\sE\in \SH(B)$ is a module for $1_B$, we have the map $\times\eta:\sE\to \rS^{-1,-1}\wedge \sE$ for each $x\in \SH(B)$. We say that  {\em $\eta$ acts invertibly on $\sE$} if $\times\eta$ is an isomorphism in $\SH(B)$. 

We consider the following situation: fix an $\SL$-oriented motivic spectrum $\sE\in \SH(B)$. This gives us the twisted cohomology theory $\sE^{*,*}$ underlying the oriented cohomology defined by $\sE$. Let $(\sE^{*,*}, \tilde{\sE}_{*,*})$ be an extension of $\sE^{*,*}$ to an oriented cohomology theory on $\Sm_B$, in other words, we define new pushforward maps
\[
\hat{f}_*:\sE^{*,*}_W(Y, \omega_{Y/B}\otimes f^*L)\to \sE^{*-2d,*-d}_Z(X, \omega_{X/B}\otimes L)
\]
The main result of this section is a comparison theorem. Before stating the result we recall the decomposition of $\SH(B)[1/2]$ into plus and minus parts.

We have the involution $\tau:1_B\to 1_B$ induced by the symmetry isomorphism $\tau:\P^1\wedge\P^1\to\P^1\wedge\P^1$.   In $\SH(B)[1/2]$, this gives us the idempotents $(\id+\tau)/2$, $(\id-\tau)/2$, and so decomposes 
$\SH(B)[1/2]$ into +1 and -1 ``eigenspaces'' for $\tau$:
\[
\SH(B)[1/2]=\SH(B)^+\times\SH(B)^-
\]
We decompose $\sE\in \SH(B)[1/2]$ as $\sE=\sE_+\oplus \sE_-$. 

\begin{theorem}\label{thm:SLUniqueness} Suppose the pushforward maps
\[
f_*, \hat{f}_*:\sE^{*,*}_W(Y, \omega_{Y/B}\otimes f^*L)\to \sE^{*-2d,*-d}_Z(X, \omega_{X/B}\otimes L)
\]
agree for $W$, $Z$,  $L$, $X=V$ a vector bundle over $Y$ and $f:Y\to V$ the zero-section. Suppose in addition that one of the following conditions holds:
\begin{enumerate}
\item the $\SL$-orientation of $\sE$ extends to a $\GL$-orientation;
\item $\eta$ acts invertibly on $\sE$;
\item $2$ acts invertibly on $\sE$ and  $\sE_+^{-1,0}(U)=0$ for affine $U$ in $\Sm_B$.
\end{enumerate}
Then $f_*=\hat{f}_*$ for all $X, Y, Z, W, L, f$ for which the pushforward is defined.
\end{theorem}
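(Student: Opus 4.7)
The plan is to follow the Panin--Smirnov strategy for uniqueness of pushforwards, adapted to the $\SL$-oriented setting as in \cite{AnanSL}. Since $Y$ is quasi-projective over $B$ and $X$ is separated, any proper morphism $f : Y \to X$ in $\Sm_B$ admits a factorization $f = p \circ i$, with $i : Y \inj X \times_B \P^N$ a closed immersion over $X$ and $p : X \times_B \P^N \to X$ the structural smooth projection. By functoriality of pushforward (axiom A5), which both $f_*$ and $\hat f_*$ satisfy, it suffices to prove $i_* = \hat\imath_*$ for closed immersions and $p_* = \hat p_*$ for projections of this form. Supports and twisting line bundles play no essential role beyond bookkeeping.

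\textbf{Step 2: Closed immersions via deformation to the normal bundle.} Given a closed immersion $i : Y \inj X$ in $\Sm_B$ with normal bundle $q : N_i \to Y$ and zero-section $s : Y \inj N_i$, I would use the deformation space $M \to \A^1$, smooth with fiber $M_1 \simeq X$ over $t = 1$ and $M_0 \simeq N_i$ over $t = 0$, together with a closed immersion $\tilde\imath : Y \times_B \A^1 \inj M$ recovering $i$ and $s$ on the two fibers. The transverse base-change axiom (A2) applied to the Cartesian squares obtained by pulling back along $\{0\}, \{1\} \inj \A^1$ expresses both $i_*$ and $s_*$ as pullbacks along the respective fiber inclusions of the single family pushforward $\tilde\imath_*$. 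Combined with $\A^1$-homotopy invariance and the Morel--Voevodsky purity identifications $\sE^{*,*}_Y(X) \cong \sE^{*,*}_{Y \times \A^1}(M) \cong \sE^{*,*}_Y(N_i)$, which depend only on the $\SL$-orientation and therefore agree in both pushforward theories, this exhibits $i_*$ as determined by $s_*$ via purely structural isomorphisms. Since $s_* = \hat s_*$ by hypothesis, we conclude $i_* = \hat\imath_*$.

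\textbf{Step 3: Projections $p : X \times_B \P^N \to X$.} Both $p_*$ and $\hat p_*$ are $\sE^{*,*}(X)$-linear by the projection formula (a consequence of A2 and A5), so they are determined by their action on generators of $\sE^{*,*}(X \times_B \P^N ; M)$ as an $\sE^{*,*}(X)$-module, for each twist $M$. I would argue by induction on $N$, using the localization triangle (A3) for the inclusion $X \times_B \P^{N-1} \inj X \times_B \P^N$ with open complement $X \times_B \A^N$: the portion of cohomology supported on $X \times_B \P^{N-1}$ is a closed-immersion pushforward, handled by Step 2 together with the inductive hypothesis for $X \times_B \P^{N-1} \to X$; the complementary contribution, coming from $\sE^{*,*}(X \times_B \A^N ; M)$, reduces via excision and $\A^1$-homotopy invariance to a zero-section pushforward from $X \inj X \times_B \A^N$, covered directly by the hypothesis.

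\textbf{The three hypotheses and the main obstacle.} The three alternative hypotheses enter precisely to guarantee a projective bundle formula with generators of the geometric form needed in the induction. In case (i), the $\GL$-orientation yields the classical projective bundle formula, with generators the powers of the Euler class of $\sO(1)$, each realized as a pushforward of a unit class from a linear subspace. In case (ii), Ananyevskiy's $\SL$-oriented projective bundle formula for $\eta$-invertible theories \cite{AnanSL} supplies analogous Schubert-type generators. In case (iii), the idempotent decomposition $\sE = \sE_+ \oplus \sE_-$ of $\sE[1/2]$ splits the problem: $\eta$ acts invertibly on $\sE_-$, reducing to case (ii), while the vanishing $\sE_+^{-1,0}(U) = 0$ upgrades the $\SL$-orientation on $\sE_+$ to a $\GL$-orientation, reducing to case (i). The main obstacle throughout is precisely this projection case: without one of the three hypotheses the cohomology of $\P^N$ need not be generated by classes of the indicated geometric provenance, and the inductive reduction to Step 2 together with the zero-section hypothesis breaks down.
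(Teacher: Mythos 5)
Your overall skeleton (factor $f=p\circ i$, handle closed immersions by deformation to the normal cone using the zero-section hypothesis, then treat the projections $p:X\times_B\P^N\to X$ under the three hypotheses, with case (iii) splitting into $\sE_+\oplus\sE_-$ and reducing to (i) and (ii)) is the same as the paper's. The gap is in Step 3, at exactly the point that is the real difficulty. After the projection formula and your induction, everything reduces to showing $p_*(1_{\P^N_X})=\hat p_*(1_{\P^N_X})$, i.e. agreement on classes of the form $p^*\beta$; and this is \emph{not} "covered directly by the hypothesis" via excision and homotopy invariance. Your claim that the contribution "coming from $\sE^{*,*}(X\times_B\A^N;M)$ reduces \dots to a zero-section pushforward from $X\inj X\times_B\A^N$" does not work: the localization sequence gives no direct-sum decomposition of $\sE^{*,*}(X\times_B\P^N;M)$, and, more fundamentally, $X\times_B\A^N\to X$ is not proper, so the restriction of a class to the open complement carries no information about its pushforward to $X$; the zero-section hypothesis only constrains maps into the cohomology of a vector bundle, never a map $\sE^{*,*}(X\times_B\P^N)\to\sE^{*,*}(X)$ on the unit-generated part. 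If your reduction were valid, the three extra hypotheses would be needed only for the projective bundle formula, which is not the case.

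What is missing is a genuine computation of $p_*(1)$, and this is where the paper's proof does real work. In case (i) it invokes the Panin--Smirnov uniqueness mechanism: by the projective bundle formula one must show both theories give the same value on $1_{\P^N_X}$, and this is extracted from the pushforward of the unit along the diagonal $\Delta_{\P^N_X}:\P^N_X\to\P^N_X\times_X\P^N_X$ (a closed immersion, hence already known to agree) via \cite[Lemma 1.9.4]{PaninRR}; your "powers of $e(\sO(1))$ are pushforwards from linear subspaces" observation handles the generators $\xi^j$ with $j\ge1$ but leaves $\xi^0=1$ untouched. In case (ii) the paper does not use Schubert-type generators at all: it first reduces odd $N$ to even $N$ by the hyperplane embedding $\P^N\subset\P^{N+1}$ (a closed immersion), and then uses Ananyevskiy's theorem that for $N$ even the pushforward $i_*$ along the section through $(1:0:\cdots:0)$ is an isomorphism on the relevant twisted groups; since $p\circ i=\id$ and $i_*=\hat\imath_*$, both $p_*$ and $\hat p_*$ are inverse to the same isomorphism, hence equal. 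Finally, in case (iii) your reduction is the right one, but note that "the vanishing $\sE_+^{-1,0}(U)=0$ upgrades the $\SL$-orientation on $\sE_+$ to a $\GL$-orientation" is itself a nontrivial lemma (one shows units act trivially on Thom spaces in the plus part and then glues local Thom classes by Mayer--Vietoris, using the vanishing to kill the ambiguity), which your sketch asserts without argument.
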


\begin{proof} By the standard argument of deformation to the normal cone, it follows that $f_*=\hat{f}_*$ for all $f:Y\to X$ a closed immersion, $Z, W, L$.  As every proper map in $\Sm_B$ is projective, $f$ admits a factorization $f=p\circ i$, with $i:Y\to X\times_B\P^N$ a closed immersion and $p:X\times_B\P^N\to X$ the projection. By functoriality of the pushforward maps, it suffices to check that $p_*=\hat{p}_*$. 

In case (i), this follows from the uniqueness assertion in \cite[Theorem 2.5(i)]{PaninII}. Indeed, the cohomology theory associated to a $\GL$-oriented motivic spectrum $\sE$ satisfies the axioms of Panin-Smirnov and the associated Thom isomorphisms give rise to an ``orientation'' in the sense of \cite[Definition 1.9]{PaninII}, so we may apply the results cited. We note that in \cite{PaninII} the base-scheme is $\Spec k$, $k$ a field, so loc. cit. does not immediately apply to our setting of a more general base-scheme; we say a few words about the extension of this result to our base-scheme $B$. As a proper map $f:Y\to X$ in $\Sm_B$ is projective, one factors $f$ as $f=p\circ i$, with $i:Y\to \P^n_X$ a closed immersion and $p:\P^n_X\to X$ the projection. The uniqueness for  a closed immersion in $\Sm_B$ reduces to the case of the zero-section of a vector bundle by the usual method of deformation to the normal bundle, and as the pushforward by the zero-section of our two theories are the same by assumption, we have agreement in the case of a closed immersion. For the projection $p$, the proof of \cite[Theorem 2.5(i)]{PaninII} relies on \cite[Theorem 1.1.9]{PaninRR}, where for $p$, using the projective bundle formula,  the key point is to show that both pushforwards have the same value on the unit $1_{\P^n_X}\in\sE^{0,0}(\P^n_X)$. The proof of this relies on the formula for the pushforward of $1_{\P^n_X}$ under the diagonal $\Delta_{\P^n_X}:\P^n_X\to \P^n_X\times_X\P^n_X$ given by \cite[Lemma 1.9.4]{PaninRR}. As $\Delta_{\P^n_X}$ is a closed immersion, the two pushforwards under $\Delta_{\P^n_X}$ agree, and the proof of the formula in \cite[Lemma 1.9.4]{PaninRR} uses only formal properties of pushforward and pullback as expressed in our axioms, plus the projective bundle formula. This latter in turn relies only on properties of the Thom class of $\sO(-1)$ and localization with respect to $\A^m_X\subset \P^m_X$, 
 and thus we may use \cite[Lemma 1.9.4]{PaninRR} in our more general setting. The argument that the pushforward of $1_{\P^n_X}$ under $p$ can be recovered from the formula for the pushforward of $1_{\P^n_X}$ under $\Delta_{\P^n_X}$ is elementary and formal, and only uses the restriction of the two theories to $\SmL_X$, and not the fact that these restrictions come from theories over $k$. Thus, the argument used in the proof of \cite[Lemma 1.9.4]{PaninRR} may be used to prove our result in case (i).

In case (ii), we use Lemma~\ref{lem:Case(ii)} below. Indeed, if $N$ is odd, we may apply the closed immersion $X\times_B\P^N\to X\times_B\P^{N+1}$ as a hyperplane, so we reduce to the case $N$ even, in which case both $p_*$ and $\hat{p}_*$ are inverse to the map $i_*$, where $i:X\to X\times_B\P^N$ is the section associated to the point $(1:0:\ldots:0)$ of $\P^N$.

In case (iii) we may work in the category $\SH(B)[1/2]$.  We decompose $\sE\in \SH(B)[1/2]$ as $\sE=\sE_+\oplus \sE_-$ and similarly decompose the pushforward maps $f_*$ and $\hat{f}_*$. By Lemma~\ref{lem:EtaInv}, $\eta$ acts invertibly on $\SH(B)^-$ and the projection of $\eta$ to $\SH(B)^+$ is zero. By Lemma~\ref{lem:Case(iii)} below, the  $\SL$-orientation of $\sE$ induces an $\SL$-orientation on the projection $\sE^+$ that extends to a $\GL$-orientation. By (i), this implies that $f_*^+=\hat{f}_*^+$. By (ii), $f_*^-=\hat{f}_*^-$, so $f_*=\hat{f}_*$.
\end{proof}

\begin{lemma}[\hbox{\cite[Theorem 1]{AnanSL}}]\label{lem:Case(ii)} Let $\sE\in \SH(B)$ be an $\SL$-oriented motivic spectrum on which $\eta$ acts invertibly. Let $0\in \P^N(\Z)$ be the point $(1:0\ldots:0)$. For $X\in \Sm_B$, $L\to X$ a line bundle and $Z\subset X$ a closed subset, the pushforward map
\[
i_*:\sE^{*-2N,*-N}_Z(X, \omega_{X/B}\otimes L)\to  \sE^{*,*}_{p^{-1}(Z)}(X\times_B\P^N, \omega_{\P^N/B}\otimes p^*L)
\]
is an isomorphism.
\end{lemma}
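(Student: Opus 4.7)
The strategy is a cellular-inductive one modeled on Ananyevskiy's original argument. First, applying the five-lemma to the localization long exact sequences supplied by axiom~(A3) for $Z \subseteq X$ and $p^{-1}(Z) \subseteq X \times_B \P^N$, one reduces to the case $Z = X$, so I drop the support condition from now on. Next, I would factor $i$ through the standard affine chart $\A^N \subset \P^N$ containing $0$, writing $i = j \circ s_0$ with $s_0 : X \to X \times_B \A^N$ the zero section of the trivial rank-$N$ bundle and $j : X \times_B \A^N \hookrightarrow X \times_B \P^N$ the open immersion. By Lemma~\ref{lem:InclThomIso} and Construction~\ref{const:CanonThom}, $s_{0*}$ is the canonical Thom isomorphism onto cohomology with supports in $X \times \{0\}$ on $X \times_B \A^N$, and excision identifies this with the corresponding supported group on $X \times_B \P^N$. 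Consequently $i_*$ is the composite of this Thom--excision isomorphism with the forget-supports map
\[
\mathrm{fgt}\colon \sE^{*,*}_{X \times \{0\}}(X \times_B \P^N,\, \omega_{\P^N/B} \otimes p^*L) \longrightarrow \sE^{*,*}(X \times_B \P^N,\, \omega_{\P^N/B} \otimes p^*L),
\]
so it suffices to prove that $\mathrm{fgt}$ is an isomorphism.

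By axiom~(A3), $\mathrm{fgt}$ sits in a long exact sequence whose third term is $\sE^{*,*}(X \times_B (\P^N \setminus \{0\}),\, \omega_{\P^N/B} \otimes p^*L)$, so the claim is equivalent to the vanishing of these groups in every bidegree. The projection from $0$ exhibits $\P^N \setminus \{0\}$ as the total space of the line bundle $\sO_{\P^{N-1}}(1)$, an $\A^1$-bundle over $\P^{N-1}$; homotopy invariance together with the adjunction formula $\omega_{\P^N/B}|_{\P^{N-1}} \simeq \omega_{\P^{N-1}/B} \otimes \sO_{\P^{N-1}}(-1)$ then reduces the vanishing to
\[
\sE^{*,*}\bigl(X \times_B \P^{N-1},\, \omega_{\P^{N-1}/B} \otimes \sO_{\P^{N-1}}(-1) \otimes p^*L\bigr) = 0.
\]

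I would establish this vanishing by induction on $N$, propagating through the cellular filtration of projective space via repeated use of the same localization, homotopy invariance, and Thom isomorphism inputs. The expected main obstacle, and the sole place where the hypothesis that $\eta$ acts invertibly is used essentially, is the base case on $\P^1$: namely, the vanishing $\sE^{*,*}(X \times_B \P^1,\, \sO_{\P^1}(-1) \otimes \omega_{\P^1/B} \otimes p^*L) = 0$. The plan here is to exploit the defining presentation of $\eta$ as the Hopf map $\A^2 \setminus \{0\} \to \P^1$: multiplication by $\eta$ on the relevant twisted cohomology group can be identified, via the $\SL$-oriented Thom isomorphism for a rank-two bundle whose determinant realizes the odd twist $\sO(-1) \otimes \omega_{\P^1/B}$, with a map that factors through a trivially vanishing term, exhibiting the group as $\eta$-torsion; invertibility of $\eta$ then forces it to vanish. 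Once the base case is secured, the cellular induction — localization, homotopy invariance, and Thom isomorphism at each step — closes the argument.
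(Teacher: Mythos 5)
Your reductions are sound as far as they go, and they in fact retrace Ananyevskiy's skeleton: reduce to $Z=X$, identify $i_*$ with the Thom isomorphism for the trivial bundle followed by excision and the forget-supports map, and convert the problem into the vanishing of $\sE^{*,*}$ of $X\times_B(\P^N\setminus\{0\})$, hence of $X\times_B\P^{N-1}$ with the twist $\omega_{\P^{N-1}/B}\otimes\sO(-1)\otimes p^*L$. (The paper does not reprove any of this: after the same reduction to $Z=X$, $L=\sO_X$ it cites \cite[Theorem 4.6]{AnanSL} and only supplies the two ingredients needed over a general base $B$, namely the twisted Thom isomorphism of Construction~\ref{const:CanonThom} and the unit-action lemma \cite[Lemma 4.1]{AnanSL} via the elementary-matrix argument.) The genuine gap is your induction. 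The vanishing you need holds only for \emph{odd}-dimensional projective spaces: for $\P^M$ with $M$ even the twist $\omega_{\P^M}\otimes\sO(-1)=\sO(-M-2)$ is a square, so by the $\psi_{L,M}$ isomorphism the group is untwisted and contains $\sE^{*,*}(X;L)$ as a nonzero split summand (for Witt theory, $\W(\P^{2n}_k)\cong\W(k)\neq 0$). So an induction "through the cellular filtration", one hyperplane at a time, passes through false statements, and in the corresponding localization sequence both outer terms are nonzero, forcing you to compute the connecting map --- which is exactly the nonformal content and needs more than localization, homotopy invariance and Thom isomorphisms (it is where Ananyevskiy invokes the unit-action lemma). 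Relatedly, the statement you are proving is in fact false for odd $N$ and is only used (and only true) for even $N$: already for $N=1$, $X=B$, $L=\sO$, the twist $\omega_{\P^1/B}$ is a square, the stable splitting $\Sigma^\infty_\T(\P^1_B)_+\simeq 1_B\oplus \rS^{2,1}$ gives $\sE^{a,b}(\P^1_B;\omega_{\P^1/B})\cong\sE^{a,b}(B)\oplus\sE^{a-2,b-1}(B)$, while the source is only $\sE^{a-2,b-1}(B)$; since $\pi_*i_*=\id$, the map $i_*$ is split injective with cokernel $\sE^{a,b}(B)$, nonzero for Witt theory. So any proof valid for all $N$ is impossible, and yours breaks precisely at the induction.

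The repair keeps your framework but steps by two and proves the vanishing only for odd $M$. Excise $X\times_B\P^{M-2}\subset X\times_B\P^M$: by purity and the twisted Thom isomorphism for the normal bundle $\sO(1)^{\oplus 2}$ (whose determinant is a square), the supported term is, up to shift and a square twist, $\sE^{*,*}(X\times_B\P^{M-2};\sO(-1)\otimes p^*L)$, which vanishes by induction; the open complement is a vector bundle over $X\times_B\P^1$ onto which $\sO_{\P^M}(-1)$ restricts as the pullback of $\sO_{\P^1}(-1)$, so its cohomology vanishes by the base case; the localization sequence then kills the middle term. For the base case there is a cleaner argument than your Thom-isomorphism sketch: after reducing Zariski-locally on $X$ to $L=\sO_X$ by Mayer--Vietoris, the group in question is by definition the $\sE$-cohomology of $\Th_{X\times\P^1}(\sO(-1))$, and the cofiber sequence $(\A^2\setminus\{0\})\times X\to \P^1\times X\to \Th(\sO(-1))$ exhibits this Thom spectrum, after splitting off base points, as a suspension of $\Cone(\eta)\wedge X_+$, so invertibility of $\eta$ gives the vanishing at once. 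With these two corrections your cellular argument does yield a self-contained proof of the even-$N$ case, which is a genuinely different route from the paper's citation of Ananyevskiy; as written, however, the inductive mechanism does not work.
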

\begin{proof} Using a Mayer-Vietoris sequence, we see that the statement is local on $X$ for the Zariski topology, so we may assume that $L=\sO_X$. If we prove the statement for the pair $(X,X)$ and $(X\setminus Z, X\setminus Z)$ the local cohomology sequence gives the result for $(X,Z)$, thus we may assume that $Z=X$, and we reduce to showing that
\[
i_*:\sE^{*-2N,*-N}(X, \omega_{X/B})\to  \sE^{*,*}(X\times_B\P^N, \omega_{\P^N/B})
\]
is an isomorphism.   

This is \cite[Theorem 4.6]{AnanSL} in case $B=\Spec k$, $k$ a field. The proof over a general base-scheme is essentially the same, we say a few words about this generalization.  Most of the results that are used in the proof of loc. cit. are are already proved in the required generality here, for example, the Thom isomorphism \eqref{eqn:CanonThom} of Construction~\ref{const:CanonThom} generalizes Ananyevskiy's construction \cite[Corollary 1]{AnanSL} from $B=\Spec k$ to general $B$. The proof of \cite[Theorem 4.6]{AnanSL} relies also on \cite[Lemma 4.1]{AnanSL}, which in our setting reduces to the fact that for $X\in \Sch_B$, and $u\in \Gamma(X,\sO_X^\times)$ a unit, the automorphism of $X\times \P^1$ sending $(x,(t_0: t_1))$ to $(x, (ut_0, u^{-1}t_1)$ induces the identity on $X_+\wedge\P^1/X$ in $\H_\bul(X)$. This follows by identifying $\P^1_X$ with $\P(\A^2_X)$ and noting that the diagonal matrix with entries $u, u^{-1}$ is an elementary matrix in $\GL_2(\Gamma(X,\sO_X))$. 
\end{proof}

\begin{lemma}\label{lem:Case(iii)} Suppose that $\sE\in \SH(B)$ is $\SL$ oriented and that $\sE^{-1,0}(U)=0$ for all affine $U$ in $\Sm_B$. Then the induced $\SL$ orientation on $\sE_+\in \SH(B)_+$ extends to a $\GL$ orientation.
\end{lemma}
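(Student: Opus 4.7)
The plan is as follows. A $\GL$-orientation on $\sE_+$ is precisely the assignment, to each rank $r$ vector bundle $V\to X$ with $X\in\Sm_B$, of an untwisted Thom class $\th_V\in\sE_+^{2r,r}(\Th(V))$ satisfying naturality, multiplicativity, and normalization. The $\SL$-orientation on $\sE$, hence on $\sE_+$, produces classes $\th_{V,\theta}$ only after choosing a trivialization $\theta:\det V\xr{\sim}\sO_X$. So the task is (a) to produce a canonical class without referring to $\theta$, and (b) to globalize when $\det V$ is not globally trivializable. The central input is Lemma~\ref{lem:EtaInv}: on $\SH(B)_+$ the element $\eta$ is zero. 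Via Morel's relation $\<u\>-1 = \eta\cdot[u]$ (valid over a general base, where $[u]\in \sE^{1,1}(X)$ denotes the class of the unit $u\in\sO^\times(X)$ under $X\xr{u}\G_m$), this yields $\<u\>=1$ as an endomorphism of $1_X$ in $\SH(X)_+$, and hence the unit $u$ acts trivially on any $\sE_+$-cohomology group.

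The key step is to show that, when $\det V$ is trivializable, the $\SL$-Thom class $\th_{V,\theta}$ viewed in $\sE_+^{2r,r}(\Th(V))$ is independent of $\theta$. Any two trivializations differ by a unit $u\in\sO^\times(X)$, and a direct computation from Definition~\ref{def:SLOr}(ii) together with the automorphism induced by $u$ on the Thom space shows $\th_{V,u\theta}=\<u\>\cdot\th_{V,\theta}$. The previous paragraph gives $\<u\>=1$ in $\sE_+$, so $\th_{V,\theta}=\th_{V,u\theta}$, and we obtain an unambiguous class $\th_V^U\in\sE_+^{2r,r}(\Th(V|_U))$ on every open $U\subseteq X$ over which $\det V$ trivializes.

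To construct $\th_V$ globally, cover $X$ by finitely many affine opens $\{U_\alpha\}$ on which $\det V$ is trivializable; by the previous step, the classes $\th_V^{U_\alpha}$ agree on all pairwise intersections. Gluing proceeds inductively by Mayer-Vietoris: given $X=U\cup W$ with compatible $\th_V^U,\th_V^W$, the MV sequence provides a lift $\th_V\in\sE_+^{2r,r}(\Th(V))$, unique modulo the image of $\sE_+^{2r-1,r}(\Th(V|_{U\cap W}))$. On the intersection $U\cap W$, which is still affine and over which $\det V$ is trivializable, the $\SL$-Thom isomorphism identifies this ambiguity group with $\sE_+^{-1,0}(U\cap W;M)$ for some line bundle $M$; refining the cover so that $M$ is trivializable on double intersections and invoking the hypothesis $\sE^{-1,0}(U)=0$ (which passes to the summand $\sE_+$), the ambiguity vanishes and the lift is unique. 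Induction on the cardinality of the cover then yields a global $\th_V$, and uniqueness guarantees that it is independent of all choices.

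The axioms for a $\GL$-orientation are now inherited: naturality under $f:Y\to X$ follows because $f^*\th_V$ and $\th_{f^*V}$ both restrict to the canonical local Thom class on any affine open of $Y$ over which $f^*\det V$ trivializes, hence agree by the uniqueness in the gluing. Multiplicativity $\th_{V_1\oplus V_2}=\th_{V_1}\cup\th_{V_2}$ reduces in the same way to property (ii) of Definition~\ref{def:SLOr} applied on a common trivializing affine open. Normalization is immediate from Definition~\ref{def:SLOr}(iii). The main obstacle lies in the formula $\th_{V,u\theta}=\<u\>\th_{V,\theta}$ and the passage $\<u\>=1$ on $\sE_+$: once these are in hand, the gluing argument is a formal application of Mayer-Vietoris together with the given vanishing hypothesis.
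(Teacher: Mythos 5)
Your proposal follows the same overall architecture as the paper's proof: produce canonical local Thom classes on an affine cover, use the vanishing of $\eta$ in the plus part (Lemma~\ref{lem:EtaInv}, via Morel's relation $\times u = \id + \eta[u]$) to make them independent of choices, glue by Mayer--Vietoris using the hypothesis $\sE^{-1,0}(U)=0$ on the (affine, since $X$ is separated) double intersections, and then deduce naturality, multiplicativity and normalization from the uniqueness of the glued class. That part of your argument is sound, and incidentally simpler than you make it: on $U\cap W$ the chosen trivialization of $\det V$ already identifies the ambiguity group $\sE_+^{2r-1,r}(\Th(V|_{U\cap W}))$ with $\sE_+^{-1,0}(U\cap W)$ via Lemma~\ref{lem:ThomIso1}, so no auxiliary line bundle $M$ and no refinement of the cover are needed.

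The step that is not justified as you state it is the transformation law $\th_{V,u\theta}=\<u\>\cdot\th_{V,\theta}$, which you call ``a direct computation from Definition~\ref{def:SLOr}(ii) together with the automorphism induced by $u$''. Axioms (i)--(iii) never compare the Thom classes of a fixed bundle for two different trivializations of its determinant, and they say nothing about how Thom classes transform under bundle automorphisms covering the identity of the base (naturality (i) only concerns pullback along morphisms in $\Sm_B$). What (ii) and (iii) really give is the reduction $\th_{V,u\theta}\cup\th_{\sO_X,\id}=\th_{V,\theta}\cup\th_{\sO_X,\mu_u}$, where $\mu_u$ denotes the trivialization of $\sO_X$ given by multiplication by $u$ (using that $(u\theta)\wedge\id=\theta\wedge\mu_u$ on $\det(V\oplus\sO_X)$); so everything hinges on comparing $\th_{\sO_X,\mu_u}$ with $\th_{\sO_X,\id}$, and that comparison is precisely the content that still has to be proved --- it does not drop out of the axioms. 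This is why the paper's proof never compares determinant trivializations at all: it trivializes $V$ itself on the cover, so the local Thom datum is the suspension class, and then shows the change-of-framing group $\GL_r(\sO(U_i))$ acts trivially on $\sE_+^{*,*}(\Th(U_i\times\A^r))$ by factoring $g=m_u\cdot h$ with $h\in\SL_r(\sO(U_i))$ (handled by $\SL$-orientedness) and $m_u=\mathrm{diag}(u,1,\dots,1)$, for which $\Th(m_u)=(\times u)\wedge\id$ and $\times u=\id+\eta[u]$ becomes the identity in $\SH(B)_+$. Your $\eta$-ingredient is exactly the right one, and your formula is true for the theories at hand, but to make the step rigorous you should either prove the transformation law along these lines (reduce to a framed trivial bundle, where the automorphism becomes an honest self-map of $(\P^1)^{\wedge r}\wedge X_+$ whose effect in the plus part can be computed) or simply run the argument with framings of $V$ as the paper does.
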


\begin{proof} Let $u\in \Gamma(X, \sO_X^\times)$ be a unit on some $X\in \Sm_B$. Then the map
\[
\times u:X\times_B\P^1\to X\times_B\P^1;\quad (x, [t_0:t_1])\mapsto  (x, [ut_0:t_1])
\]
induces the identity on $\rS^{2,1}\wedge X/B$ in $\SH(B)_+$. Indeed, let $[u]:X/B\to X/B\wedge\G_m$ be the map induced by $u:X\to \G_m$. The argument given by Morel \cite[6.3.4]{MorelICTP}, that
\[
\times u/B=\id+\eta[u]
\]
in case $B=\Spec k$, $k$ a field, is perfectly valid over a general base-scheme: this only uses the fact that for $\sX$ and $\sY$ pointed spaces over $B$, one has
\[
\Sigma^\infty_{\rS^1}\sX\times_B\sY\simeq \Sigma^\infty_{\rS^1}\sX\oplus \Sigma^\infty_{\rS^1}\sY   \oplus \Sigma^\infty_{\rS^1}(\sX\wedge \sY)
\]
and that the map $\times _u:\rS^1\wedge\G_m\wedge X_+\to \rS^1\wedge \G_m\wedge X_+$ is the $\rS^1$-suspension of the composition
\[
\rS^1\wedge\G_m\wedge X_+\xr{\id\wedge u}\rS^1\wedge(\G_m\times\G_m)\wedge X_+
\xr{\id\wedge \mu\wedge\id}\rS^1\wedge \G_m\wedge X_+
\]
where $\mu:\G_m\times\G_m\to \G_m$ is the multiplication.
As $\eta$ goes to zero in $\SH(B)_+$, it follows that $\times u/B=\id$ in $\SH(B)_+$.

Now take $g\in \Gamma(X, \GL_n(\sO_X))$, let $u=\det g$,  let $m_u\in \Gamma(X, \GL_n(\sO_X))$ be the diagonal  matrix with entries $u, 1,\ldots, 1$ and let $h=m_u^{-1}\cdot g\in \Gamma(X,\SL_n(\sO_X))$. We have 
\[
\Th_X(\sO_X^n)=(\P^1)^{\wedge n}\wedge X_+.
\]
Since $\sE$ is $\SL$-oriented, the map $\Th(h):\Th_X(\sO_X^n)\to \Th_X(\sO_X^n)$ induces the identity on $\sE^{**}$ and thus 
\[
\Th(g)^*=\Th(m_u)^*:\sE^{*,*}_+(\Th_X(\sO_X^n))\to \sE^{*,*}_+(\Th_X(\sO_X^n))
\]
But as  $\Th(m_u)=(\times u)\wedge \id$,  our previous computation shows that $\Th(m_u)^*=\id$.

Now let $V\to X$ be a rank $r$ vector bundle on some $X\in \Sm_B$, choose a trivializing affine open cover $\sU=\{U_i\}$ of $X$ and let $\phi_i:V_{|U_i}\to U_i\times \A^r$ be a local framing. We have the suspension isomorphism
\[
\Th(V_{U_i})\simeq \Th(U_i\times \A^r)=\Sigma_\T^r U_{i+}
\]
giving the isomorphism
\[
\theta_i:\sE_+^{a,b}(U_i)\to \sE^{2r+a, r+b}_{+0_{V_{|U_i}}}(V_{|U_i}).
\]
Since $\GL_r(\sO_{U_i})$ acts trivially on $\sE_+^{**}(\Th(U_i\times \A^r))$, the isomorphism $\theta_i$ is independent of the choice of framing $\phi_i$. 
In addition, the assumption $\sE^{-1,0}(U_i\cap U_j)=0$ implies 
\[
\sE^{2r-1, r}_{+0_{V_{|U_i\cap U_j}}}(V_{|U_i\cap U_j})=0
\]
for all $i,j$. By Mayer-Vietoris, the sections
\[
\theta_i(1_{U_i})\in \sE^{2r, r}_{+0_{V_{|U_i}}}(V_{|U_i})
\]
uniquely extend to an element
\[
\theta_V\in \sE^{2r, r}_{+0_V}(V)
\]

The independence of the $\theta_i$ on the choice of framing and the uniqueness of the extension readily implies the functoriality of $\theta_V$ and similarly implies the product formula $\theta_{V\oplus W}=p_1^*\theta_V\cup p_2^*\theta_W$. By construction, $\theta_V$ is the suspension of the unit over $U_i$, another application of   independence  of the choice of framing and the uniqueness of the extension shows that this is the case over every open subset $U\subset X$ for which $V_{|U}$ is the trivial bundle. Finally, the  independence and uniqueness shows that $V\mapsto \theta_V$ is an extension of the $\SL$ orientation on $\sE_+$ induced by that of $\sE$.
\end{proof} 

\begin{lemma} For $u\in \Gamma(X,\sO_X^\times)$ we have
\[
[u]\eta=\eta[u]:\Sigma^\infty_\T  X_+\to \Sigma^\infty_\T  X_+ 
\]
\end{lemma}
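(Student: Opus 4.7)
The plan is to identify both $\eta[u]$ and $[u]\eta$ with explicit compositions involving the diagonal of $X_+$ and then to observe that the desired equality reduces to a standard coherence identity in the symmetric monoidal category $\SH(B)$. Writing $f := \eta \circ [u] : \Sigma^\infty_\T X_+ \to 1_B$ for the morphism obtained by composing the class $[u] : \Sigma^\infty_\T X_+ \to \rS^{1,1}$ induced by $u : X \to \G_m$ with the Hopf map $\eta : \rS^{1,1} \to 1_B$, the two endomorphisms in question are
\[
\eta[u] = \rho \circ (\id \wedge f) \circ \Delta, \qquad [u]\eta = \lambda \circ (f \wedge \id) \circ \Delta,
\]
where $\Delta : \Sigma^\infty_\T X_+ \to \Sigma^\infty_\T X_+ \wedge_B \Sigma^\infty_\T X_+$ is the diagonal and $\lambda,\rho$ are the left and right unitors of the monoidal structure on $\SH(B)$. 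This matches the way $\eta[u]$ is used in the proof of the preceding lemma, via the splitting of $\Sigma^\infty_{\rS^1}(\G_m\times \G_m)$ and the identification of the $\G_m\wedge\G_m$-summand of $\mu$ with $\eta$.

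I would then invoke two general facts. First, since the algebraic diagonal $X \to X \times_B X$ is $\Sigma_2$-equivariant, the induced map $\Delta$ is cocommutative: $\tau_{\Sigma^\infty_\T X_+, \Sigma^\infty_\T X_+} \circ \Delta = \Delta$. Second, naturality of the symmetry applied to the pair $(f, \id_{\Sigma^\infty_\T X_+})$ yields
\[
(f \wedge \id) = \tau_{\Sigma^\infty_\T X_+, 1_B} \circ (\id \wedge f) \circ \tau_{\Sigma^\infty_\T X_+, \Sigma^\infty_\T X_+},
\]
and the coherence axiom $\lambda \circ \tau_{Y, 1_B} = \rho$ governing the interaction of the symmetry with the unitors (valid in any symmetric monoidal category) lets me replace $\lambda \circ \tau_{\Sigma^\infty_\T X_+, 1_B}$ by $\rho$.

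Combining these,
\[
[u]\eta = \lambda \circ (f \wedge \id) \circ \Delta = \rho \circ (\id \wedge f) \circ \tau_{\Sigma^\infty_\T X_+, \Sigma^\infty_\T X_+} \circ \Delta = \rho \circ (\id \wedge f) \circ \Delta = \eta[u],
\]
as desired. I do not anticipate a serious obstacle: the argument is a purely formal coherence manipulation in $\SH(B)$, resting only on naturality of $\tau$, its compatibility with the unitors, and cocommutativity of the diagonal. The most delicate point is simply fixing conventions so that the two sides $\eta[u]$ and $[u]\eta$ correspond to the two displayed compositions; once this is settled by reference to the preceding lemma, the equality is automatic.
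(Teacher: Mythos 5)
Your formal manipulation (naturality of $\tau$, the unitor coherence $\lambda\circ\tau_{Y,1_B}=\rho$, and cocommutativity of the diagonal) is correct, but it proves the wrong statement. What it shows is that, for the \emph{single fixed} map $f=\eta\circ[u]:\Sigma^\infty_\T X_+\to 1_B$, left cap product and right cap product with $f$ coincide: $\lambda\circ(f\wedge\id)\circ\Delta=\rho\circ(\id\wedge f)\circ\Delta$. That is true and essentially tautological. The lemma, however, asserts that the two \emph{composites of operators in opposite orders} agree: $\eta[u]$ means ``insert the factor $[u]$ and then contract two $\G_m$-factors by $\eta$,'' while $[u]\eta$ means ``first contract by $\eta$, then insert $[u]$.'' Your identification $[u]\eta=\lambda\circ(f\wedge\id)\circ\Delta$ with $f=\eta\circ[u]$ silently replaces the composite ``$[u]$ after $\eta$'' by the composite ``$\eta$ after $[u]$'' placed on the other side of $X_+$; this is exactly the assertion to be proved, so the step begs the question. (Note also that $[u]\circ\eta$ does not even typecheck as a composition of your two maps; the two orders only make sense after choosing where among the $\G_m$-suspension factors each operator acts, and comparing those choices is the whole content.)

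The reason no purely formal coherence argument can work is that both $\eta$ and $[u]$ shift the Tate degree, so interchanging them forces a permutation of $\G_m$-smash factors, and the symmetry $\tau$ on $\G_m\wedge\G_m$ is \emph{not} the identity in $\SH(B)$: it equals $\epsilon=-\langle-1\rangle=-(1+\eta[-1])$ (see Lemma~\ref{lem:EtaInv}). This is precisely why the paper's proof does not argue by coherence alone: it writes both operators explicitly on the summand $\Sigma^\infty_\T X_+\wedge\G_m^{\wedge 2}$ via the formula $\eta([s]\wedge[t])=[st]-[s]-[t]$, and then observes that the permutation needed to move the $[u]$-factor into place is the $3$-cycle $(u,s,t)\mapsto(s,t,u)$ on $\G_m^{\wedge 3}$, which is induced by a matrix that is a product of elementary matrices and hence is $\A^1$-homotopic to the identity. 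Equivalently, one needs the identity $\eta\circ\epsilon=\eta$ in addition to $\epsilon$-graded commutativity; some input of this kind (beyond symmetric monoidal coherence and cocommutativity of $\Delta$) is indispensable and is missing from your argument.
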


\begin{proof} We use the decomposition
\[
\Sigma^\infty_\T  X_+\wedge\G_m\times\G_m=\Sigma^\infty_\T  X_+\wedge\G_m\oplus \Sigma^\infty_\T  X_+\wedge\G_m\oplus \Sigma^\infty_\T  X_+\wedge\G_m\wedge\G_m
\]
Via this, $\eta$ is the map
\[
[s]\wedge [t]\mapsto [st]-[s]-[t]
\]
so $\eta[u]$ sends $[t]$ to $[ut]-[u]-[t]$ and $\id_{\G_m}\wedge\eta[u]$ sends 
$[s]\wedge [t]$ to $[s]\wedge[ut]-[s]\wedge[u]-[s]\wedge[t]$, so $[u]\eta$ is given by
\[
[s]\wedge [t]\mapsto [st]-[s]-[t]\mapsto [u]\wedge[st]-[u]\wedge[s]-[u]\wedge[t].
\]
We have the automorphism $\xi$ of  $\G_m^{\wedge 3}$ sending $[u]\wedge[s]\wedge [t]$ to
$[s]\wedge [t]\wedge [u]$. We have the isomorphism in $\H_\bul(B)$, $\Sigma_{S_1}^6\G_m^{\wedge 3}\simeq \A^3/\A^3\setminus\{0\}$, via which $\Sigma_{S_1}^6\xi$ is induced by the linear map $(u,s,t)\mapsto (s, t, u)$. As this latter linear map has matrix in the standard basis a product of elementary matrices, $\Sigma_{\rS^1}^6\xi$ is $\A^1$-homotopic to the identity, so after stabilizing,  $\id_{\G_m}\wedge\eta[u]$ is the map
\[
[s]\wedge [t]\mapsto [s]\wedge [t]\wedge[u]\mapsto[u]\wedge [s]\wedge [t]\mapsto
[u]\wedge[st]-[u]\wedge[s]-[u]\wedge[t]=[u]\eta([s]\wedge[t]).
\]
\end{proof}

\begin{lemma}\label{lem:EtaInv} The projection  $\eta_-$ of $\eta$ to $\SH(B)_-$ is an isomorphism and the projection $\eta_+$ of $\eta$ to $\SH(B)_+$ is zero.
\end{lemma}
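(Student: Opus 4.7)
The plan is to reduce both claims to the single sign relation
\[
(1+\tau)\cdot \eta = 0 \quad \text{in } \Hom_{\SH(B)}(\rS^{1,1}, 1_B),
\]
and then exploit the splitting $\SH(B)[1/2] = \SH(B)^+ \times \SH(B)^-$.

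To derive this relation, I would decompose $\P^1 \simeq \rS^1 \wedge \G_m$, so that the swap $\tau$ on $\P^1 \wedge \P^1$ factors as the swap on the two $\rS^1$-factors (contributing a stable sign $-1$) composed with the swap $\epsilon \in \End_{\SH(B)}(1_B)$ on the two $\G_m$-factors, giving $\tau = -\epsilon$. The identity $\epsilon \cdot \eta = \eta$---equivalent to Morel's relation $\eta \cdot h = 0$ for the hyperbolic class $h = 1 - \epsilon$---is then derived from the preceding lemma by specializing $u = -1$ (a global unit on any base) and invoking Morel's additive formula $\langle u\rangle = 1 + \eta \cdot [u]$ in $\End_{\SH(B)}(1_B)$. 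Combining gives $\tau\eta = -\epsilon\eta = -\eta$, as desired.

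The vanishing of $\eta_+$ is then immediate: on $\SH(B)^+[1/2]$, $\tau$ acts as the identity, so $2 \eta_+ = (1+\tau)\eta|_+ = 0$, and inverting $2$ yields $\eta_+ = 0$. For invertibility of $\eta$ on $\SH(B)^-[1/2]$, I would use the Morel-type formula in the opposite direction: reading $\langle -1 \rangle = 1 + \eta\cdot [-1]$ together with $\langle -1\rangle = -\epsilon$ gives $\eta \cdot [-1] = -\epsilon - 1$, which on $\SH(B)^-[1/2]$ (where $\tau = -1$, hence $\epsilon = 1$) becomes $\eta \cdot [-1] = -2$, an invertible element after inverting $2$. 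Combined with the graded-commutativity relation between $\eta$ and $[-1]$ in the bigraded ring $\bigoplus_{a,b} \Hom_{\SH(B)}(\rS^{a,b}, 1_B)$, which simplifies on the minus part since $\epsilon = 1$ there, this produces both a left and a right inverse to the action of $\eta$; hence $\times\eta : \id \to \Sigma^{-1,-1}$ is an isomorphism of endofunctors of $\SH(B)^-[1/2]$.

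The main obstacle is the first step, the sign relation $\tau\eta = -\eta$: while classical over a field (after Morel), establishing it in the generality of a noetherian base via the preceding lemma requires careful sign-bookkeeping and the derivation of Morel-type identities such as $\langle u\rangle = 1 + \eta\cdot [u]$ in this setting.
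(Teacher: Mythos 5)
Your skeleton is the same as the paper's: identify the swap $\tau$ on $\P^1\wedge\P^1$ with $-\epsilon$ via $\P^1\simeq \rS^1\wedge\G_m$, kill $\eta_+$ from the relation $(1+\tau)\eta=0$, and invert $\eta$ on the minus part from $\eta[-1]=-2$ there together with the commutation $[-1]\eta=\eta[-1]$. The genuine gap is in your derivation of the key relation $\epsilon\cdot\eta=\eta$. The statement of the preceding lemma specialized at $u=-1$ gives only $[-1]\eta=\eta[-1]$, and combining this with $\langle u\rangle=1+\eta[u]$ (and even with $\langle-1\rangle^2=\id$, $\epsilon^2=\id$) yields at most $\eta\,(2+\eta[-1])\,[-1]=0$; it does not yield $\eta\,(2+\eta[-1])=0$, which, once $\epsilon=-(1+\eta[-1])$, is exactly equivalent to $\epsilon\eta=\eta$. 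Morel's relation $\eta\cdot h=0$ is an independent input and cannot be conjured from the commutation relation plus the formula for $\langle u\rangle$. The paper obtains it not from the statement but from the proof of the preceding lemma: after decomposing $\Sigma^\infty_{\rS^1}(\G_m\times\G_m)_+$, the Hopf map $\eta$ is identified on the summand $\G_m\wedge\G_m$ with $[s]\wedge[t]\mapsto[st]-[s]-[t]$, an expression visibly invariant under swapping $s$ and $t$, whence $\eta\circ\epsilon=\eta$ directly (equivalently, one uses the commutativity of the multiplication $\G_m\times\G_m\to\G_m$ entering the Hopf construction). Without this ingredient your relation $(1+\tau)\eta=0$, and hence $\eta_+=0$, is unproved.

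A second, smaller gap: for the minus part you invoke $\langle-1\rangle=-\epsilon$. Given your $\tau=-\epsilon$, this is the assertion $\tau=\langle-1\rangle=1+\eta[-1]$ on $\P^1\wedge\P^1$, which over a general base $B$ is not formal: in the paper it is proved by writing the swap matrix as a product of elementary matrices times $\operatorname{diag}(-1,1)$, so that $\tau$ on $\A^2/(\A^2\setminus\{0\})$ is $\A^1$-homotopic to the map induced by $(x,y)\mapsto(-x,y)$, which by the argument of Lemma~\ref{lem:Case(iii)} acts as $1+\eta[-1]$. You flag this as ``sign bookkeeping,'' but it is the actual content of that step. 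Once these two points are supplied, your projection arguments for $\eta_+$ and $\eta_-$ (including the two-sided inverse $-[-1]/2$ on the minus part) go through and coincide with the paper's proof.
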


\begin{proof} Morel proves this in \cite[\S 6]{MorelICTP} in the case of a field, but the proof works in general.   In some detail, the map $\tau$ is the map on $\A^2/(\A^2\setminus\{0\})$ induced by the linear map $(x, y)\mapsto (y,x)$. The matrix identity
\[
\begin{pmatrix}
0&1\\
1&0
\end{pmatrix}=
 \begin{pmatrix}
1&1\\
0&1
\end{pmatrix}\cdot\begin{pmatrix}
1&0\\
-1&1
\end{pmatrix}\cdot \begin{pmatrix}
1&1\\
0&1
\end{pmatrix}\cdot
\begin{pmatrix}
-1&0\\
0&1
\end{pmatrix}
\]
shows that the maps $(x, y)\to (y,x)$ and $(x,y)\mapsto (-x, y)$ are $\A^1$-homotopic. By the arguments in Lemma~\ref{lem:Case(iii)}, this latter map induces the map $1+\eta[-1]=1+[-1]\eta$ in $\SH(B)$, giving the identity
\[
(1+\eta[-1])_-=(1+[-1]\eta)_-=-\id\Rightarrow \eta\cdot (-[-1]/2)= (-[-1]/2)\cdot \eta=\id_{\SH(B)_-}
\]

For $\eta_+$, the projector to $\SH(B)_+$ is given by the idempotent $(1/2)(\tau+1)=(1/2)(2+\eta[-1])$, so $\eta_+=(1/2)\eta\cdot(2+\eta[-1])$. Since the map $\tau:\P^1\wedge\P^1\to \P^1\wedge\P^1$ is $1+\eta[-1]$ and $\P^1=\rS^1\wedge\G_m$, the symmetry $\epsilon:\G_m\wedge\G_m\to \G_m\wedge\G_m$ is $-(1+\eta[-1])$. From our formula for $\eta([s]\wedge[t])$ we see that $\eta\epsilon=\eta$ which gives $\eta\cdot(2+\eta[-1])=0$. 
\end{proof}

\section{Applications}
\label{sec:App}

In this section, we apply the motivic Gau{\ss}-Bonnet formula of \S\ref{sec:GaussBonnet} and the comparison results of \S\ref{sec:Uniq} to various specific $\SL$-oriented cohomology theories, and thereby make computations of the motivic Euler characteristic $\chi(X/B)$ in different contexts.

\subsection{Motivic cohomology and cohomology of the Milnor K-theory sheaves}

We work over the base-scheme $B=\Spec k$, with $k$ a perfect field. In $\SH(k)$ we have the motivic cohomology spectrum   $\H\Z$ representing Voevodsky's motivic cohomology (see e.g. \cite[\S 6.2]{LevHC} for a construction valid in arbitrary characteristic). By \cite{VoevodskyChow}, there is a natural isomorphism
\[
\H\Z^{a,b}(X)\simeq \CH^b(X, 2b-a)
\]
for $X\in \Sm_B$, where $\CH^b(X, 2b-a)$ is Bloch's higher Chow group \cite{Bloch}.

$\H\Z$ admits a localization sequence: for $i:Z\to X$ a closed immersion of codimension $d$ in $\Sm_k$, there is a canonical isomorphism
\[
\H\Z^{a,b}_Z(X)\simeq \H\Z^{a-2d, b-d}(Z)
\]
See for example \cite{BlochML}. In particular, for $p:V\to X$ a rank $r$ vector bundle over $X\in \Sm_k$, we have the isomorphism
\[
\H\Z^{2r, r}_{0_V}(V)\simeq \H\Z^{0,0}(X)
\]
which gives us Thom classes $\vartheta_V^{\H\Z}\in \H\Z^{2r, r}_{0_V}(V)$ corresponding to the unit $1^{\H\Z}_X\in \H\Z^{0,0}(X)$. Thus $\H\Z$ is a $\GL$-oriented motivic spectrum.

Let $X$ be a smooth projective $k$-scheme of dimension $n$ over $k$. For a class $x\in \H\Z^{2n, n}(X)$, the isomorphism $\H\Z^{2n,n}(X)\simeq \CH^n(X,0)=\CH^n(X)$ allows one to represent $x$ as the class of a 0-cycle $\tilde{x}=\sum_in_i p_i$, with the $p_i$ closed points of $X$. One has the degree $\deg_k(p_i):=[k(p_i):k]$ and extending by linearity gives the degree $\deg_k(\tilde{x})$, which one shows passes to rational equivalence to define a degree map
\[
\deg_k: \H\Z^{2n, n}(X)\to \CH^0(\Spec k)=\Z.
\]

As a $\GL$-oriented theory, $\H\Z$ has Chern classes for vector bundles: $c_r(V)\in \H\Z^{2r, r}(X)$ for $V\to X$ a vector bundle over some $X\in \Sm_k$ and $r \ge 0$.

\begin{theorem}\label{thm:EulerChow}
  Let $X\in \Sm_k$ be projective of dimension $d_X$. Then
  \[
    u^{\H\Z}(\chi(X/k))=\deg_k(c_{d_X}(T_{X/k})).
  \]
\end{theorem}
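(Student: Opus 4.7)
I would apply the motivic Gau{\ss}-Bonnet theorem (Theorem~\ref{thm:GaussBonnet}) to the motivic ring spectrum $\H\Z$. Since $\H\Z$ is $\GL$-oriented, the formula simplifies by Remark~\ref{rem:Oriented2} (no line-bundle twists), giving
\[
u^{\H\Z}(\chi(X/k)) = \pi_{X*}\bigl(e^{\H\Z}(T_{X/k})\bigr) \in \H\Z^{0,0}(\Spec k) \simeq \Z.
\]
There are then two things to identify: the Euler class $e^{\H\Z}(T_{X/k})$ with the top Chern class $c_{d_X}(T_{X/k})$, and the abstract pushforward $\pi_{X*}$ with the classical degree map on $\CH^{d_X}(X)$.

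For the first identification, for any $\GL$-oriented theory the Euler class of a rank $r$ bundle $V \to X$ equals the top Chern class $c_r(V)$. Indeed, by Remark~\ref{rem:EulerThom} we have $e^{\H\Z}(V) = \bar{s}^*\th_V$ for $\bar{s}:X_+ \to \Th_X(V)$, and on the other hand the Chern classes in $\H\Z^{**}$ are defined (via the projective bundle formula) so that $c_r(V)$ is precisely the pullback of the Thom class along the zero-section. In fact, as both sides are characterized by compatibility with products, pullbacks, and agreement with $c_1$ on line bundles, the equality reduces via the splitting principle to the line-bundle case, where it holds by definition of $c_1$ in a $\GL$-oriented theory.

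For the second identification, I would use Theorem~\ref{thm:SLUniqueness}. The assignment $(X,Z,L) \mapsto \H\Z^{*,*}_Z(X;L)$ together with the concrete pushforward on Bloch's higher Chow groups (defined by proper pushforward of cycles, with the degree map as its instance in top codimension) defines an $\SL$-oriented (in fact $\GL$-oriented) cohomology theory in the sense of Section~\ref{sec:SLOrCohThy}: axioms A1--A7 are standard properties of higher Chow groups, with the crucial purity/localization A4 being the isomorphism $\H\Z^{a,b}_Z(X) \simeq \H\Z^{a-2d,b-d}(Z)$ used above. Thus we are in case (i) of Theorem~\ref{thm:SLUniqueness}. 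To apply it, I need only verify that the abstract and concrete pushforwards agree on the zero-section $s : Y \to V$ of a vector bundle $p : V \to Y$. For the abstract pushforward this is Lemma~\ref{lem:InclThomIso}: $s_*(1_Y^{\H\Z}) = \th_V$. For the concrete pushforward, $s_*[Y] = [0_V]$ in $\CH^r_{0_V}(V)$, which under the purity isomorphism $\H\Z^{2r,r}_{0_V}(V) \simeq \H\Z^{0,0}(Y)$ corresponds to $1_Y^{\H\Z}$, so again $s_*(1_Y^{\H\Z})$ is the Thom class. Since pushforward along $s$ is in both cases multiplication by the Thom class followed by the canonical identification, the two pushforwards coincide on zero-sections.

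Combining, $\pi_{X*}$ on $\H\Z^{2d_X,d_X}(X) = \CH^{d_X}(X)$ agrees with $\pi_{X*}^{\mathrm{cyc}}$, which is by definition $\deg_k$, so
\[
u^{\H\Z}(\chi(X/k)) = \pi_{X*}(c_{d_X}(T_{X/k})) = \deg_k(c_{d_X}(T_{X/k})).
\]
The main obstacle in this plan is the bookkeeping required to check that the cycle-theoretic pushforward truly satisfies the axiom package of Section~\ref{sec:SLOrCohThy}, but each axiom reduces to a well-known property of higher Chow groups or of Voevodsky's motivic cohomology.
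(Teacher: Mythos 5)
Your proposal is correct and follows essentially the same route as the paper: apply the motivic Gau{\ss}-Bonnet theorem to $\H\Z$, use the comparison theorem \ref{thm:SLUniqueness} (case (i), i.e.\ Panin--Smirnov) to identify the abstract pushforward with the cycle-theoretic one after checking agreement on zero-sections, and identify $e^{\H\Z}(T_{X/k})$ with $c_{d_X}(T_{X/k})$ (the paper simply cites Fulton's self-intersection formula where you sketch a splitting-principle argument). No substantive differences.
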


\begin{proof}  One has well-defined pushforward maps on $\CH^*(-,*)$ for projective morphisms (see e.g. \cite[Proposition 1.3]{Bloch}). Via the isomorphism $ \H\Z^{a,b}(X)\simeq \CH^b(X, 2b-a)$ \cite{VoevodskyChow}, this gives pushforward maps $\hat{f}_*$ on $\H\Z^{*,*}$ for $f:Y\to X$ a projective morphism in $\Sm_B$ (see \cite{Bloch} for details),  making $(X, Z)\mapsto \H\Z^{*,*}_Z(X)$ a $\GL$-oriented cohomology theory on $\Sm_k$. In addition, for $\pi_X:X\to \Spec k$ in $\Sm_k$ projective of dimension $n$,  the map  $\hat{\pi}_{X*}:\H\Z^{2n, n}(X)\to \CH^0(\Spec k)=\Z$ is $\deg_k$, and for $i:Y\to X$ a closed immersion, the map $\hat{i}_*$ is given by the localization theorem, which readily implies that $\hat{i}_*=i_*$. By our comparison theorem \ref{thm:SLUniqueness}, which here really reduces to the theorem of Panin-Smirnov, it follows that $\hat{f}_*=f_*$ for all projective $f$.

Finally, one has $c_{d_X}=s^*s_*(1^{\H\Z}_X)=e^{\H\Z}(V)$ (\cite[Corollary 6.3]{Fulton}), so applying the motivic Gau{\ss}-Bonnet theorem \ref{thm:GaussBonnet} gives the statement.
\end{proof}

One can obtain the same result by using the cohomology of the Milnor K-theory sheaves as a bigraded cohomology theory. The homotopy t-structure on $\SH(k)$ has heart the abelian category of homotopy modules $\HM(k)$  (see \cite[\S 5.2]{MorelICTP} and \cite{MorelA1} for details); we let $\H_0:\SH(k)\to \HM(k)$ be the associated functor.  The fact that $\H\Z^{n,n}(\Spec F) \simeq \K^\M_n(F)$ for $F$ a field \cite{NS, Totaro} says that $\H_0\H\Z$ is canonically isomorphic to the homotopy module $(\K^\M_n)_n$, which is in fact a cycle module in the sense of Rost \cite{Rost}. This gives us the isomorphism
\[
\H_0\H\Z^{a,b}(X)\simeq \H^a(X, \sK^\M_b).
\]
The isomorphism $H^n(X, \sK^\M_n)\simeq \CH^n(X)$  (a special case of Rost's formula for the Chow groups of a cycle module, \cite[Corollary 6.5]{Rost})  gives us as above Thom classes $\vartheta^{\K^\M_*}(V)\in \H_0\H\Z^{2r,r}_{0_V}(V)$, giving $\H_0\H\Z$ a $\GL$-orientation. As for $\H\Z^{*,*}$, one has explicitly defined pushforward maps on $\H^*(-, \sK^M_*)$ which give $\H_0\H\Z^{*,*}$ the structure of a $\GL$-oriented cohomology theory on $\Sm_k$ and for which the pushforward map for the zero-section of a vector bundle is given by the Thom isomorphism. Since the pushforward on $\H^n(X, \sK^\M_n)$ agrees with the classical pushforward on $\CH^n$, we deduce the following using the same proof as for Theorem~\ref{thm:EulerChow}.

\begin{theorem}\label{thm:EulerMilnor}
  Let $X\in \Sm_k$ be projective of dimension $d_X$. Then
  \[
    u^{\H_0\H\Z}(\chi(X/k))=\deg_k(c_{d_X}(T_{X/k})) \text{ in }\CH^0(\Spec k)=\Z.
  \]
\end{theorem}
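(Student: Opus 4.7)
The plan is to follow exactly the strategy used for Theorem~\ref{thm:EulerChow}, substituting the $\GL$-oriented cohomology theory $(X,Z) \mapsto \H_0\H\Z^{*,*}_Z(X) \simeq H^*(X,\sK^\M_*)$ for Bloch's higher Chow groups. Because the preceding paragraph has already equipped $\H_0\H\Z$ with a $\GL$-orientation via the Rost isomorphism $H^n(X,\sK^\M_n) \simeq \CH^n(X)$, all the structural inputs needed to invoke Theorem~\ref{thm:SLUniqueness} and Theorem~\ref{thm:GaussBonnet} are in place.

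First, I would verify that the explicitly defined pushforward maps $\hat{f}_*$ on $H^*(-,\sK^\M_*)$ (coming from Rost's machinery of cycle modules, cf. \cite{Rost}) assemble, together with the pullbacks, cup products, Thom classes, and localization sequences, into a $\GL$-oriented cohomology theory on $\Sm_k$ in the sense of \S\ref{sec:SLOrCohThy}. The key point for the hypothesis of Theorem~\ref{thm:SLUniqueness} is that for the zero-section $s : X \to V$ of a vector bundle $p : V \to X$, the map $\hat{s}_*$ agrees with the Thom isomorphism; this is essentially built in, since the Thom class of $V$ in $\H_0\H\Z^{2r,r}_{0_V}(V)$ has been defined via the localization isomorphism $\H_0\H\Z^{2r,r}_{0_V}(V) \simeq \H_0\H\Z^{0,0}(X)$, which is precisely Rost's pushforward for a regular embedding. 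With this hypothesis verified, case (i) of Theorem~\ref{thm:SLUniqueness} (using the $\GL$-orientation) yields $\hat{f}_* = f_*$ for all projective morphisms in $\Sm_k$, so in particular $\hat{\pi}_{X*} = \pi_{X*}$.

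Second, I would identify the two sides of the Gauss-Bonnet formula concretely. By Theorem~\ref{thm:GaussBonnet} applied to $\sE = \H_0\H\Z$, we have
\[
u^{\H_0\H\Z}(\chi(X/k)) = \pi_{X*}(e^{\H_0\H\Z}(T_{X/k})) \in \H_0\H\Z^{0,0}(\Spec k) = \Z.
\]
Under the Rost isomorphism $H^{d_X}(X,\sK^\M_{d_X}) \simeq \CH^{d_X}(X)$, the Euler class $e^{\H_0\H\Z}(T_{X/k}) = s^* s_*(1)$ matches the top Chern class $c_{d_X}(T_{X/k})$ by the standard Chow-theoretic identity $c_r(V) = s^* s_*(1)$ (\cite[Corollary 6.3]{Fulton}), since the $\GL$-orientation on $\H_0\H\Z$ is defined through the same localization-theoretic Thom class used to construct Chern classes in $\CH^*$. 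Similarly, the pushforward $\hat{\pi}_{X*} : H^{d_X}(X,\sK^\M_{d_X}) \to H^0(\Spec k,\sK^\M_0) = \Z$ agrees, under Rost's isomorphism, with the classical degree map $\deg_k : \CH^{d_X}(X) \to \Z$; this is because on generators $[p]$ for closed points $p \in X$, Rost's pushforward on $\sK^\M_0$ acts by multiplication by $[k(p):k]$.

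The main potential obstacle is the careful bookkeeping to ensure that the $\GL$-orientation on $\H_0\H\Z$ constructed just before the theorem is the same (not merely isomorphic) as the one implicit in the Chow-group comparison of Theorem~\ref{thm:EulerChow}, so that the uniqueness theorem really matches the explicit Rost pushforward with the abstract one coming from the six-functor formalism. Once this identification is made — and it is essentially formal, since both are characterized by their behavior on Thom classes of vector bundles via the same localization sequence — the proof reduces to chaining together Theorem~\ref{thm:GaussBonnet}, Theorem~\ref{thm:SLUniqueness}, and the classical identities $e = c_{\mathrm{top}}$ and $\pi_* = \deg_k$ in Chow theory.
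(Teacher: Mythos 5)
Your proposal is correct and follows essentially the same route as the paper: equip $\H_0\H\Z \simeq (\sK^\M_*)$ with the $\GL$-orientation coming from Rost's isomorphism $\H^n(X,\sK^\M_n)\simeq \CH^n(X)$, check that Rost's explicit pushforwards agree with the Thom isomorphism on zero-sections so that Theorem~\ref{thm:SLUniqueness}(i) identifies them with the six-functor pushforwards, and then conclude via Theorem~\ref{thm:GaussBonnet} together with $e = c_{d_X}$ and $\hat{\pi}_{X*} = \deg_k$. This is exactly the argument the paper gives (it simply says "the same proof as for Theorem~\ref{thm:EulerChow}" after setting up the cycle-module structure).
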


\subsection{Algebraic K-theory}
\label{sec:K}

We now let $B$ be any regular separated base-scheme of finite Krull dimension. Algebraic K-theory on $\Sm_B$ is represented by the motivic commutative ring spectrum $\KGL\in \SH(B)$ (see \cite[\S 6.2]{VoevICM}). Just as for $\H\Z$, the  purity theorem
\[
\KGL_Z^{a,b}(X)\simeq \KGL^{a-2c, b-c}(Z)
\]
for $i:Z\to X$ a closed immersion of codimension $d$ in $\Sm_B$  (a consequence of Quillen's localization sequence for algebraic K-theory \cite[\S 7, Proposition 3.2]{QuillenKThy}) gives Thom class $\vartheta^\KGL(V)\in \KGL^{2r,r}_{0_V}(V)$ for $V\to X$ a rank $r$ vector bundle over $X\in \Sm_B$, and makes $\KGL$ a $\GL$-oriented motivic spectrum. 

Explicitly, $\KGL$ represents Quillen K-theory on $\Sm_B$ via $\KGL^{a,b}\simeq \K_{2b-a}$ and the Thom class for a rank $r$ vector bundle $p:V\to X$ is represented by the Koszul complex $\Kos_V(p^*V^\vee, \can)$. Here $\can:p^*V^\vee\to \sO_V$ is the dual of the tautological section $\sO_V\to p^*V$ and $\Kos_V(p^*V^\vee, \can)$ is the complex whose terms are given by
\[
\Kos_V(p^*V^\vee, \can)^{-r}=\Lambda^rp^*V^\vee
\]
and whose differential $\Lambda^rp^*V^\vee\to \Lambda^{r-1}p^*V^\vee$ is given with respect to a local framing of $V^\vee$ by
\[
d(e_{i_1}\wedge\ldots\wedge e_{i_r})=\sum_{j=1}^r(-1)^{j-1}\can(e_{i_j})\cdot 
e_{i_1}\wedge\ldots\wedge \widehat{e_{i_j}}\wedge\ldots\wedge e_{i_r}.
\]
This complex is a locally free resolution of $s_*(\sO_X)$, where $s:X\to V$ is the zero-section. Thus, by the identification of $\KGL^{2r, r}_{0_V}(V)$ with the Grothendieck group of the triangulated category of perfect complexes on $V$ with support contained in $0_V$, $\Kos_V(p^*V^\vee, \can)$ gives rise to a class $[\Kos_V(p^*V^\vee, \can)]\in \KGL^{2r, r}_{0_V}(V)$ which maps to $1_X$ under the purity isomorphism $\KGL^{2r, r}_{0_V}(V)\simeq \KGL^{0,0}(X)$, so that we indeed have $[\Kos_V(p^*V^\vee, \can)] = \vartheta^\KGL(V)$.

Just as for motivic cohomology, one has explicit pushforward maps in K-theory given by Quillen's localization  and devissage theorems identifying, for $X \in \Sm_B$ and $Z \subseteq X$ a closed subscheme, the K-theory with support $\K^Z(X)$ with the K-theory of the abelian category  of coherent sheaves $\Coh_Z$ on $Z$, denoted $\rG(Z)$. For a projective morphism $f:Y\to X$, one has the pushforward map $\hat{f}_*:\rG(Y)\to \rG(X)$ defined by using a suitable subcategory of $\Coh_Y$ on which $f_*$ is exact. On $\K_0$, this recovers the usual formula
\[
\hat{f}_*([\sF])=\sum_{j=0}^{\dim Y}(-1)^j[\rR^jf_*(\sF)]
\]
for $\sF\in \Coh_Y$. Via the isomorphisms $\KGL_Z^{a,b}(X)\simeq \rG_{2b-a}(Z)$, this gives pushforward maps $\hat{f}_*$ for $\KGL^{*,*}$, defining a $\GL$-oriented cohomology theory on $\Sm_B$.

For $s:X\to V$ the zero-section of a vector bundle, $\hat{s}_*$ agrees with the pushforward  $s_*$ using the Thom isomorphism/localization theorem, hence by our comparison theorem (again really the theorem of Panin-Smirnov), we have $\hat{f}_*=f_*$ for all projective $f$.

\begin{theorem}\label{thm:EulerKThy}
  Let $\pi_X:X\to B$ be a smooth projective morphism with $B$ a regular separated scheme of finite Krull dimension. Then
  \[
    u^\KGL(\chi(X/B))=\sum_{j=0}^{\dim_BX}\sum_{i=0}^{\dim_BX} (-1)^{i+j}[\rR^j\pi_{X*}\Omega_{X/B}^i]\in \K_0(B)=\KGL^{0,0}(B).
  \]
\end{theorem}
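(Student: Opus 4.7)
The plan is to follow the pattern of Theorems~\ref{thm:EulerChow} and~\ref{thm:EulerMilnor}: apply the motivic Gau{\ss}-Bonnet formula (Theorem~\ref{thm:GaussBonnet}) to the $\GL$-oriented ring spectrum $\KGL$ and identify both sides of that formula in concrete $\K$-theoretic terms. I need three ingredients: (a) a comparison between the abstract pushforward $\pi_{X*}$ coming from the six-functor formalism and Quillen's explicit G-theoretic pushforward $\hat{\pi}_{X*}[\sF]=\sum_j(-1)^j[\rR^j\pi_{X*}\sF]$; (b) an explicit formula for the Euler class $e^\KGL(T_{X/B}) \in \K_0(X)$ via the Koszul resolution; and (c) the assembly of these into the asserted identity.

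For (a), I would first verify that the maps $\hat{f}_*$, defined via Quillen's localization and devissage to identify $\KGL^{*,*}$-cohomology with supports with $\rG$-theory and then pushing forward coherent sheaves, give $\KGL^{*,*}$ the structure of a $\GL$-oriented cohomology theory on $\Sm_B$ in the sense of \S\ref{sec:SLOrCohThy}. I would then apply Theorem~\ref{thm:SLUniqueness}(i) to conclude $\hat{f}_* = f_*$ for every projective $f$, provided agreement holds for the zero section $s : X \to V$ of a vector bundle. In that case the abstract $s_*(1_X^\KGL)$ is the Thom class $\vartheta^\KGL(V)$ by Lemma~\ref{lem:InclThomIso}, which, by the recollection preceding the theorem, is represented by the Koszul complex $\Kos_V(p^*V^\vee,\can)$; and the G-theoretic $\hat{s}_*[\sO_X] = [s_*\sO_X]$ agrees with this, since the same Koszul complex is a locally free resolution of $s_*\sO_X$.

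For (b), by Remark~\ref{rem:EulerThom} we have $e^\KGL(T_{X/B}) = \bar{s}^*\vartheta^\KGL(T_{X/B})$, where $\bar{s}$ is induced by the zero section of $T_{X/B}$. Pulling back the Koszul complex along $s$ and using $\Lambda^i T_{X/B}^\vee = \Omega^i_{X/B}$ gives, with $d = \dim_B X$,
\[
e^\KGL(T_{X/B}) = \sum_{i=0}^{d}(-1)^i[\Omega^i_{X/B}] \in \K_0(X).
\]
Combining (a), (b) and Theorem~\ref{thm:GaussBonnet} then yields
\[
u^\KGL(\chi(X/B)) = \hat{\pi}_{X*}\Bigl(\sum_{i=0}^{d}(-1)^i[\Omega^i_{X/B}]\Bigr) = \sum_{i,j=0}^{d}(-1)^{i+j}[\rR^j\pi_{X*}\Omega^i_{X/B}],
\]
which is the desired formula.

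The main point requiring care is the invocation of Theorem~\ref{thm:SLUniqueness}(i) over a general regular base-scheme $B$, since the Panin--Smirnov uniqueness statement is classically written over a field. The remarks following the proof of Theorem~\ref{thm:SLUniqueness} already indicate how the argument (deformation to the normal cone, projective bundle formula, and the diagonal formula \cite[Lemma 1.9.4]{PaninRR}) extends to our setting; verifying that nothing else obstructs the extension is where I would be most careful, but it is essentially bookkeeping rather than a substantive difficulty.
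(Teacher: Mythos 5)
Your proposal is correct and follows essentially the same route as the paper: identify the Quillen/G-theoretic pushforwards $\hat{f}_*$ as a $\GL$-oriented structure on $\KGL$, match them with the abstract $f_*$ via the zero-section case and Theorem~\ref{thm:SLUniqueness}(i) (Panin--Smirnov), compute $e^{\KGL}(T_{X/B})=\sum_i(-1)^i[\Omega^i_{X/B}]$ by pulling back the Koszul complex along the zero section, and conclude with the motivic Gau{\ss}-Bonnet formula. The paper's proof is the same, including the remark that the Panin--Smirnov uniqueness argument extends to the general base $B$, so no further comment is needed.
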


\begin{proof}
  Let $p:T_{X/B}\to X$ denote the relative tangent bundle and let $s:X\to T_{X/B}$ denote the zero-section. We have
  \[
    e^\KGL(T_{X/B})=s^*(\th(T_{X/B}))=s^*(\Kos_{T_{X/B}}(p^*T_{X/B}^\vee, \can)).
  \]
  Since $T_{X/B}^\vee=\Omega_{X/B}$, and $s^*(\can)$ is the zero-map, it follows that, in $\K_0(X)$, 
  \[
    s^*(\Kos_{T_{X/B}}(p^*T_{X/B}^\vee, \can))=\sum_{i=0}^{\dim_BX}(-1)^i[\Omega_{X/B}^i],
  \]
  and thus
  \[
    \pi_{X*}(e^\KGL(T_{X/B}))=\sum_{j=0}^{\dim_BX}\sum_{i=0}^{\dim_BX} (-1)^{i+j}[\rR^j\pi_{X*}\Omega_{X/B}^i].
  \]
  We conclude by applying the motivic Gau{\ss}-Bonnet theorem.
\end{proof}

\subsection{Milnor-Witt cohomology and Chow-Witt groups}

In this case, we again work over a perfect base-field $k$. The Milnor-Witt  sheaves  $\sK^\MW_*$ constructed by Morel (\cite[\S 6]{MorelICTP}, \cite[Chapter 3]{MorelA1}) give rise to an $\SL$-oriented theory as follows. Morel describes an isomorphism of $\sK^\MW_0$ with the sheafification $\sGW$ of the Grothendieck-Witt rings\footnote{\cite[Lemma 3.10]{MorelA1} defines an isomorphism $\GW(F)\to \K^\MW(F)$, $F$ a field. \cite[\S 3.2]{MorelA1} defines  $\sK^\MW_*$ as an unramified sheaf and it follows from \cite[Theorem A]{PaninOjan} that $\sGW$ is an unramified sheaf. From this it is not difficult to show that the isomorphism  $\GW(F)\to \K^\MW(F)$ for fields extends to an isomorphism of sheaves.}; the map of sheaves of abelian groups $\G_m\to \sGW^\times$ sending a unit $u$ to the one-dimensional form $\<u\>$ allows one to define, for $L\to X$ a line bundle, a twisted version
\[
\sK^\MW_*(L):=\sK^\MW_*\times_{\G_m}L^\times
\]
as a Nisnevich sheaf on $X\in \Sm_k$  (see \cite[pg. 118]{MorelA1} or \cite[\S 1.2]{CalmesFasel}). One may use the Rost-Schmid complex for $\sK^\MW_*(L)$ \cite[Chapter 5]{MorelA1} to compute $\H^*_Z(X, \sK^\MW_*(L))$ for $Z\subseteq X$ a closed subset, which gives a purity theorem: for $i:Z\to X$ a codimension $d$ closed immersion in $\Sm_k$ and $L\to X$ a line bundle, there is a canonical isomorphism
\begin{equation}\label{eqn:MWPurity}
  \H^*_Z(X, \sK^\MW_*(L))\simeq \H^{*-d}(Z, \sK^\MW_{*-d}(i^*L\otimes \det N_i)),
\end{equation}
where $N_i\to Z$ is the normal bundle of $i$. Applying this to the zero-section of a rank $r$ vector bundle $p:V\to X$ gives the isomorphism
\begin{equation}\label{eqn:RSIso}
\H^0(X, \sGW)\simeq \H^r_{0_V}(V, \sK^\MW_r(p^*\det^{-1} V));
\end{equation}
in particular, given an isomorphism $\phi:\det V\to \sO_X$, we obtain a Thom class
\[
\theta_{V, \phi}\in \H^r_{0_V}(V, \sK^\MW_r)
\]
corresponding to the unit section $1_X\in \H^0(X, \sGW)$.

On the other hand, Morel's computation of the 0th graded homotopy sheaf of the sphere spectrum (\cite[Theorem 6.4.1, Remark 6.4.2]{MorelICTP}, \cite[Theorem 6.40]{MorelA1}) gives an identification
\[
\H_0(1_k)\simeq (\sK^\MW_n)_{n\in \Z}
\]
in $\HM(k)$, which then gives the natural isomorphism
\[
\H_0(1_k)^{a+b, b}_Z(X)\simeq \H^a_Z(X, \sK^\MW_b).
\]
This is moreover compatible with twisting by a line bundle $p:L\to X$, on the $\H_0(1_k)$ side using the Thom space construction
\[
\H_0(1_k)^{*, *}_Z(X;L):=\H_0(1_k)^{*+2, *+1}_Z(L)
\]
and on the Milnor-Witt cohomology side using the twisted Milnor-Witt sheaves. To see this, note that the ``untwisted'' isomorphism gives us an isomorphism
\[
\H_0(1_k)^{a+b+2, b+1}_{Z}(\Th(L))\simeq \H_0(1_k)^{a+b+2, b+1}_{0_L\cap p^{-1}(Z)} (L)\simeq \H^{a+1}_{0_L\cap p^{-1}(Z)}(X, \sK^\MW_{b+1}),
\]
so it suffices to identify the right-hand side with $\H^a_Z(X,\sK^\MW_b(L))$. For $Y\in \Sm_k$ and line bundle $M\to Y$, the Rost-Schmid complex for $\sK^\MW_m(M)$ consists in degree $a$ of sums of  terms of twisted Milnor-Witt groups of the form $\K^\MW_{m-a}(k(y); \Lambda^a(\mathfrak{m}_y/\mathfrak{m}_y^2)^\vee\otimes M)$, for $y$ a codimension $a$ point of $Y$ and $\mathfrak{m}_y\subset \sO_{Y,y}$ the maximal ideal. To compute cohomology with supports in $W\subseteq Y$, one restricts to those $y\in W$. If we now take $Y=L$ and $M$ the trivial bundle, with supports in $p^{-1}(Z)\cap 0_L$ and $m=b+1$, and compare with $Y=X$, with supports in $Z$ with $m=b$, the term for $y\in Z$, of codimension $a+1$ on $L$ is $\K^\MW_{b-a}(k(y); \Lambda^a(\mathfrak{m}_y/\mathfrak{m}_y^2)^\vee\otimes L)$ while the term for $y\in Z$, of codimension $a$ on $X$ is $\K^\MW_{b-a}(k(y); \Lambda^a(\mathfrak{m}_y/\mathfrak{m}_y^2)^\vee)$, where $\mathfrak{m}_y$ is the maximal ideal in $\sO_{X,y}$ in both cases. This gives the desired identification
\[
\H^{a+1}_{0_L\cap p^{-1}(Z)}(X, \sK^\MW_{b+1})\simeq \H^{a}_{Z}(X, \sK^\MW_{b}(L)).
\]
The purity isomorphism \eqref{eqn:MWPurity} is a special case of this construction.

The Thom class $\theta_{V, \phi}\in \H^r_{0_V}(V, \sK^\MW_r)$ gives the Thom class
\[
\theta_{V, \phi}\in \H_0(1_k)^{2r, r}_{0_V}(V),
\]
making $\H_0(1_k)$ an $\SL$-oriented theory (see e.g \cite[\S 3.2]{LevEnum}). The  resulting canonical Thom class
\[
\th_V\in \H_0(1_k)^{2r, r}_{0_V}(V;\det^{-1}V)=\H^r_{0_V}(V, \sK^\MW_r(\det^{-1} V))
\]
agrees with the image of $1_X\in \H^0(X, \sGW)$ under the Rost-Schmid isomorphism \eqref{eqn:RSIso}.

Let $\pi_X:X\to \Spec k$ be smooth and projective over $k$ of dimension $d$. Using the  Rost-Schmid complex for the twisted homotopy module  one has generators for $\H^d(X, \sK^\MW_d(\omega_{X/k}))$ as formal sums $\tilde{x}=\sum_i\alpha_i\cdot p_i$, with $\alpha_i\in \GW(k(p_i))$ and $p_i\in X$ closed points. Since $k$ is perfect, the finite extension $k(p_i)/k$ is separable and one can define
\[
\widetilde{\deg}_k(\tilde{x}):=\sum_i\Tr_{k(p_i)/k}\alpha_i\in \GW(k)
\]
where $\Tr_{k(p_i)/k}:\GW(k(p_i))\to \GW(k)$ is the transfer induced by the usual trace map $\Tr_{k(p_i)/k}:k(p_i)\to k$ (see for example \cite[Lemma 2.3]{CalmesFasel} ). It is shown in \cite[\S 3]{CalmesFasel} that this descends to a map
\[
\widetilde{\deg}_k:\H_0^{2d,d}(X;\omega_{X/k})=\H^d(X, \sK^\MW_d(\omega_{X/k}))\to 
\H_0^{0,0}(\Spec k)=\GW(k)
\]
See also \cite[Lemma 5.10]{HoyoisTrace}, which identifies this map with one induced by the Scharlau trace.

The methods of this paper give a new proof of the result given in \cite[Lemma 1.5]{LevEnum}:

\begin{theorem}\label{thm:EulerClassMW}
  Let $k$ be a perfect field of characteristic different from two. For $\pi_X:X\to \Spec k$ smooth and projective over $k$, we have
  \[
    \chi(X/k)=\widetilde{\deg}_k(e^{\H_0(1_k)}(T_{X/k})).
  \]
\end{theorem}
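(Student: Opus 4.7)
The plan is to apply the motivic Gauss-Bonnet theorem (Theorem~\ref{thm:GaussBonnet}) to the $\SL$-oriented motivic spectrum $\sE = \H_0(1_k)$, identify the unit map with the identity on $\GW(k)$, and identify the abstract six-functor pushforward with the concretely defined transfer $\widetilde{\deg}_k$.

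The first step is immediate. Theorem~\ref{thm:GaussBonnet} gives
\[
u^{\H_0(1_k)}_*(\chi(X/k)) = \pi_{X*}\bigl(e^{\H_0(1_k)}(T_{X/k})\bigr) \in \H_0(1_k)^{0,0}(\Spec k).
\]
Under Morel's isomorphism $\End_{\SH(k)}(1_k) \simeq \GW(k)$ and the canonical identification $\H_0(1_k)^{0,0}(\Spec k) \simeq \H^0(\Spec k, \sK^\MW_0) \simeq \GW(k)$, the map $u^{\H_0(1_k)}_*$ becomes the identity on $\GW(k)$. The theorem thus reduces to the assertion that the abstract pushforward $\pi_{X*}$ on $\H_0(1_k)$-cohomology coincides with the concrete transfer $\widetilde{\deg}_k$ built from the Rost-Schmid complex and Scharlau traces on residue fields.

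For the second step, I would verify that the Rost-Schmid transfers equip $(X,Z,L)\mapsto \H^*(X, \sK^\MW_*(L))$ with the structure of an $\SL$-oriented cohomology theory on $\Sm_k$ in the sense of Section~\ref{sec:SLOrCohThy}; axioms (A1)--(A7) are consequences of the unramified-sheaf formalism of \cite{MorelA1} and Rost's cycle-module machinery. The crucial compatibility is that, via the purity isomorphism \eqref{eqn:RSIso}, the Rost-Schmid pushforward along the zero section $s\colon Y\to V$ of a rank $r$ vector bundle sends the unit $1_Y$ to the canonical Thom class $\th_V$; by Lemma~\ref{lem:InclThomIso}, the abstract six-functor pushforward sends the unit to the same class $\th_V$. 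Hence the two pushforward structures agree on the zero sections of vector bundles.

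The third and crucial step is to invoke Theorem~\ref{thm:SLUniqueness} to propagate this agreement to arbitrary projective morphisms. The main obstacle is that none of the alternative hypotheses (i)--(iii) of that theorem applies to $\H_0(1_k)$ integrally: Milnor-Witt cohomology is only $\SL$-orientable (not $\GL$-orientable), $\eta\colon \sK^\MW_n\to \sK^\MW_{n-1}$ is not invertible, and $2$ is not invertible in $\GW(k)$. Using $\Char k\ne 2$, one remedies this by first passing to $\H_0(1_k)[1/2]$: here $2$ is invertible, and $\H_0(1_k)[1/2]^{-1,0}_+(U) = \H^{-1}(U,\cdot) = 0$ trivially for affine $U$ as a negative-degree Nisnevich cohomology group, so hypothesis (iii) applies and yields $\pi_{X*} = \widetilde{\deg}_k$ after inverting $2$. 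Integral equality is then obtained by running the analogous argument after inverting $\eta$ (where hypothesis (ii) applies, isolating the Witt-theoretic part of $\sK^\MW_*$) and modulo $\eta$ (where $\sK^\MW_*/\eta \simeq \sK^\M_*$ is $\GL$-oriented, so hypothesis (i) applies), and assembling the pieces via the standard arithmetic fracture square for $\sK^\MW_*$. Combining this identification of $\pi_{X*}$ with $\widetilde{\deg}_k$ with the Gauss-Bonnet identity of the first step delivers the desired formula.
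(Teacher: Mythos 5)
Your proposal is correct and follows essentially the same route as the paper's proof: motivic Gau{\ss}-Bonnet for $\H_0(1_k)$, identification of the unit map with the identity on $\GW(k)$, agreement of the Rost--Schmid/Fasel and six-functor pushforwards on zero sections, the comparison theorem applied to the $\eta$-inverted theory to obtain the Witt-class equality, and recovery of the rank from the Milnor K-theoretic (GL-oriented) theory, assembled via the injectivity of $(\mathrm{rnk},q):\GW(k)\to \Z\times \W(k)$ (your ``fracture square''). The only difference is your additional step with $\H_0(1_k)[1/2]$ and hypothesis (iii), which is harmless but redundant, since the $\eta$-inverted and mod-$\eta$ pieces already determine the class in $\GW(k)$.
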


\begin{proof}
  Under Morel's isomorphism $\End_{\SH(k)}(1_X)\simeq \GW(k)$ (\cite[Theorem 6.4.1, Remark 6.4.2]{MorelICTP}, \cite[Theorem 6.40]{MorelA1}) and the isomorphism $\H^0(\Spec k, \sK^\MW_0)\simeq \GW(k)$, the unit map $u^{\H_0(1_k)}:1_k\to \H_0(1_k)$ induces the identity map on $\pi_{0,0}$. Using this, the proof of the claim is essentially the same as the other Gau{\ss}-Bonnet theorems we have discussed, but with a bit of extra work since we are no longer in the GL-oriented case.

  Fasel \cite{FaselCW} has defined pushforward maps 
  \[
    \hat{f}_*:\H^a_W(X, \sK^\MW_b(\omega_{X/k}\otimes f^*L))\to \H^{a-d}_Z(Y, \sK^\MW_{b-d}(L)) 
  \]
  for each projective morphism $f:X\to Y$ in $\Sm_k$ of relative dimension $d$, line bundle $L\to Y$, and closed subsets $Z\subseteq Y$, $W\subseteq X$ with $f(W)\subseteq Z$. In the case of the structure map $\pi_X:X\to \Spec k$, the pushforward $\tilde{\pi}_{X*}:\H^d(X, \sK^\MW_d(\omega_{X/k}))\to \H^0(\Spec k, \sK^\MW_0)=\GW(k)$ is the map $\widetilde{\deg}_k$. 

  For $s:X\to V$ the zero-section of a vector bundle, $\hat{s}_*$ is the Thom isomorphism  $s_*$. Thus, if we pass to the $\eta$-inverted theory, $\H_0(1_X)_\eta:=\H_0(1_k)[\eta^{-1}]$, our comparison theorem \ref{thm:SLUniqueness} says that $\hat{f}_{\eta *}=f_{\eta*}$ for all projective morphisms $f$ in $\Sm_k$. We have $\sK^\MW_*[\eta^{-1}]\simeq \sW$, the sheaf of Witt rings, and the map $\sK^\MW_0=\sGW\to \sK^\MW_*[\eta^{-1}]\simeq \sW$ is the canonical map $q:\sGW\to \sW$ realizing $\sW$ as the quotient of $\sGW$ by the subgroup generated by the hyperbolic form. Thus, applying our motivic Gau{\ss}-Bonnet theorem gives the identity
  \[
    q(\chi(X/k))=q(\widetilde{\deg}_k(e^{\H_0(1_k)}(T_{X/k})))\text{ in }\W(k).
  \]

  To lift this to an equality in $\GW(k)$ and thereby complete the proof, we use that the map
  \[
    (\rnk, q): \sGW\to \Z\times \sW
  \]
  is injective, together with the fact that we can recover the rank by applying $\H_0$ to the unit map $1_k\to \H\Z$ and using Theorem~\ref{thm:EulerMilnor}.
\end{proof}

\subsection{Hermitian K-theory and Witt theory}
\label{sec:KO}

We again let our base-scheme $B$ be a regular noetherian separated base-scheme of finite Krull dimension, but now assume that $2$ invertible on $B$. Our goal in this subsection is to explain how the description of the ``rank''  of $\chi(X/B)$ given by Theorem~\ref{thm:EulerKThy} can be refined to give a formula for $\chi(X/k)$ itself in terms of Hodge cohomology by using hermitian K-theory.

By work of Panin-Walter \cite{PaninWalter}, Schlichting \cite{Schlichting}, and Schlichting-Tripathi \cite{SchlichtingTripathi}, hermitian K-theory $\KO^{[*]}_*(-)$ is represented by a motivic commutative ring spectrum  $\BO\in \SH(B)$ (we use the notation of \cite{AnanWitt}).  Panin-Walter give $\BO$ an $\SL$-orientation.  $\BO$-theory also represents particular cases of Schlichting's Grothendieck-Witt groups \cite{Schlichting}, via functorial isomorphisms 
\[
\BO^{2r, r}(X;L)\simeq \KO^{[r]}_0(X, L):=\GW(\D_\perf(X), L[r], \can),
\]
where $L\to X$ is a line bundle and $\GW(\D_\perf(X), L[r], \can)$ is the Grothendieck-Witt group of {\em $L[r]$-valued symmetric bilinear forms on $\D_\perf(X)$}; we recall a version of the definition here.  

\begin{definition}
  Let  $L\to X$ be a line bundle and let $r \in \Z$. An \emph{$L[r]$-valued symmetric bilinear form} on $C\in \D_\perf(X)$ is a map
  \[
    \phi:C\lotimes C\to L[r]
  \]
  in $\D_\perf(X)$ which satisfies the following conditions.
  \begin{enumerate}
  \item $\phi$ is {\em non-degenerate}: the induced map $C\to \sRHom(C, L[n])$ is an isomorphism in $\D_\perf(X)$.
  \item $\phi$ is {\em symmetric}: $\phi\circ\tau=\phi$, where $\tau:C\lotimes C\to C\lotimes C$ is the commutativity isomorphism.
  \end{enumerate}
  (Note that we are assuming non-degeneracy in the definition but leaving this out of the terminology for the sake of brevity.)
\end{definition}

Similar to the case of algebraic K-theory discussed in \S\ref{sec:K}, for a rank $r$ vector bundle $p:V\to X$, the Thom class $\theta^\BO_V\in \BO^{2r,r}(V; p^*\det^{-1}V)$ is given by the Koszul complex $\Kos(p^*V^\vee, s_{\can}^\vee)$, where the symmetric bilinear form
\[
\phi_V:\Kos(p^*V^\vee, s_{\can}^\vee)\otimes \Kos(p^*V^\vee, s_{\can}^\vee)\to p^*\det^{-1}V[r]=
\Lambda^rV^\vee[r]
\]
is given by the usual exterior product
\[
-\wedge-: \Lambda^iV^\vee\otimes\Lambda^{r-i}V^\vee\to \Lambda^rV^\vee.
\]

Moreover, there are isomorphisms for $i<0$
\[
\BO^{2r-i, r}(X;L)\simeq \W^{r-i}(\D_\perf(X), L[r], \can)
\]
where $\W^{r-i}(\D_\perf(X), L[r], \can)$ is Balmer's triangulated Witt group. In the case $B = \Spec k$ for $k$ a field of characteristic different from two, Ananyevskiy \cite[Theorem 6.5]{AnanWitt} shows that this isomorphism induces an isomorphism of $\eta$-inverted hermitian K-theory with Witt-theory,
\[
\BO[\eta^{-1}]^{*,*}\simeq \W^*[\eta,\eta^{-1}],
\]
where one gives $\eta$ bidegree $(-1,-1)$ and an element $\alpha\eta^n$ with $\alpha\in \W^m$ has bidegree $(m-n,-n)$; the same proof works over out general base $B$ (with assumptions as at the beginning of this subsection). 

For $f:Y\to X$ a proper map of relative dimension $d_f$ in $\Sm_B$ and $L$ a line bundle on $X$, we follow Calm\`es-Hornbostel \cite{CalmesHornbostel} in defining a pushforward map 
\[
\hat{f}_*:\BO^{2r,r}(Y, \omega_{Y/B}\otimes f^*L)\to \BO^{2r-2d_f,r-d_f}(X, \omega_{X/B}\otimes L)
\]
by Grothendieck-Serre duality. In op. cit., this is worked out for the $\eta$-inverted theory $\BO_\eta$ when the base is a field; however, the same construction works for $\BO$ over the general base-scheme $B$ and goes as follows. For $r \ge 0$, given an $L[r]$-valued symmetric bilinear form $\phi:C\lotimes  C\to \omega_{Y/B}\otimes f^*L[r]$, we have the corresponding isomorphism
\[
\tilde{\phi}:C\to \sRHom(C, \omega_{Y/B}\otimes f^*L[r]) \simeq
\sRHom(C, \omega_{Y/X}\otimes f^*(\omega_{X/B}\otimes L[r]))
\]
Grothendieck-Serre duality gives the isomorphism
\[
\rR f_*\sRHom(C, \omega_{Y/X}\otimes f^*(\omega_{X/B}\otimes L[r]))\xymatrix{\ar[r]^{\psi}_\sim&}
\sRHom(\rR f_*C, \omega_{X/B}\otimes L[r-d_f]).
\]
Composing these, we obtain the isomorphism
\[
\psi\circ\tilde{\phi}:\rR f_*C\to \sRHom(\rR f_*C, \omega_{X/B}\otimes L[r-d_f]),
\]
corresponding to the (nondegenerate) bilinear form
\[
\rR f_*(\phi):\rR f_*C\lotimes  \rR f_*C\to  \omega_{X/B}\otimes L[r-d_f],
\]
which one can show is symmetric. We explicitly define the above pushforward map by setting $\hat{f}_*(C, \phi) := (\rR f_*C, \rR f_*(\phi))$.

Applying this in the situation that $f=\pi_X:X\to B$ is a smooth and proper $B$-scheme of relative dimension $d_X$, we may obtain the formula
\[
 \hat{\pi}_{X*}(e^\BO(T_{X/B}))=(\oplus_{i,j=0}^{\dim_BX}\rR^i\pi_{X*}\Omega_{X/B}^j)[j-i], \Tr),
\]
where 
\[
\Tr:(\oplus_{i,j=0}^{\dim_kX}\rR^i\pi_{X*}\Omega_{X/B}^j)[j-i])\otimes(\oplus_{i,j=0}^{\dim_kX}\rR^i\pi_{X*}\Omega_{X/B}^j)[j-i])\to \sO_B
\]
is the symmetric bilinear form in $\D_\perf(B)$ determined by the pairings
\[
(\rR^i\pi_{X*}\Omega_{X/B}^j)\otimes (\rR^{d_X-i}\pi_{X*}\Omega_{X/B}^{d_X-j})\xr{\cup}
\rR^{d_X}\pi_{X*}\Omega_{X/B}^{d_X} \xr{\Tr}\sO_B.
\]
Indeed, if $s:X\to T_{X/B}$ denotes the zero-section, we have
\[
e^\BO(T_{X/B})=s^*(\Kos(T_{X/B}), \phi)=(\oplus_{j=0}^{d_X}\Omega^j_{X/B}[j],s^*\phi)
\]
with $s^*\phi$ determined by the product maps
\[
\Omega^j_{X/B}[j]\otimes \Omega^{d_X-j}_{X/B}[d_X-j]\to \omega_{X/B}[d_X],
\]
and thus $\hat{\pi}_{X*}(e^\BO(T_{X/B}))$ is $\oplus_{i,j=0}^{\dim_kX}\rR^i\pi_{X*}\Omega_{X/B}^j)[j-i]$ with the symmetric bilinear form $\Tr$ as described above. 

Passing to the $\eta$-inverted theory $\BO_\eta$, our comparison theorem~\ref{thm:SLUniqueness} gives 
\[
q\circ \hat{\pi}_{X*}=q\circ \pi_{X*}
\]
as maps $\BO_\eta^{2d_X, d_X}(X,\omega_{X/B})\to \BO_\eta(B)$. We check that the conditions of the comparison theorem hold just as we did for algebraic K-theory. Firstly, as mentioned above, the $\SL$-orientation for $\BO$ defined by Panin-Walter can be described as follows: the Thom class for an oriented vector bundle ($p:V\to X$, $\rho:\sO_X\xr{\sim}\det V$) is given by the Koszul complex $\Kos(p^*V^\vee, s_{\can}^\vee)$ equipped with the symmetric bilinear form $\phi_V$ defined by the product in the exterior algebra followed by the isomorphism $p^*\rho^\vee:p^*\det^{-1}V\to \sO_V$. On the other hand, the Calm\`es-Hornbostel pushforward for the zero-section $s:X\to V$ of a rank $r$ vector bundle $p:V\to X$ with isomorphism $\rho:\sO_X\xr{\sim} \det V$ is as described above, sending a symmetric bilinear form $\psi:C\lotimes C\to \omega_{X/B} \otimes s^*L[n]$ to the symmetric bilinear form
\[
\rR s_*(\psi): \rR s_*C \lotimes \rR s_*C\to \omega_{V/B} \otimes L[n+r].
\]
Since $s$ is finite, $\rR s_*C \simeq C$, which in $\D_\perf(V)$ is canonically isomorphic to $p^*C\otimes_{\sO_V}\Kos(p^*V^\vee, s_{\can}^\vee)$. We may thus view $\rR s_*(\psi)$ instead as a map
\[
[p^*C\otimes_{\sO_V}\Kos(p^*V^\vee, s_{\can}^\vee)] \lotimes  [p^*C\otimes_{\sO_V}\Kos(p^*V^\vee, s_{\can}^\vee)] \to \omega_{V/B} \otimes L[n+r];
\]
tracing through its definition, one finds that this map is given by the composition
\begin{multline*}
[p^*C\otimes_{\sO_V}\Kos(p^*V^\vee, s_{\can}^\vee)] \lotimes  [p^*C\otimes_{\sO_V}\Kos(p^*V^\vee, s_{\can}^\vee)] \\\xr{\sim}
[p^*C\lotimes p^*C]\otimes[\Kos(p^*V^\vee, s_{\can}^\vee)\otimes \Kos(p^*V^\vee, s_{\can}^\vee)]
\\\xr{p^*\psi\otimes \phi_V} \omega_{X/B} \otimes L[n] \otimes p^*\det^{-1}V[r] \xr{\sim} \omega_{V/B} \otimes L[n+r].
\end{multline*} 
As this is exactly $p^*(C, \phi) \otimes \th_{V,\rho}$, we see that the Calm\`es-Hornbostel pushforward for $s$ is the same as that defined by the  Panin-Walter $\SL$-orientation on $\BO_\eta$, which verifies the hypothesis in our Theorem~\ref{thm:SLUniqueness}. 

Having verified this, we can prove our main result.

\begin{theorem}
  \label{thm:EulerCharKO}
Let $B$ be a regular noetherian separated scheme of finite Krull dimension with $2$ invertible in $\Gamma(B, \sO_B)$.  Let $X$ be a smooth projective $B$-scheme. Then:
  \begin{enumerate}[label=(\arabic*)]
  \item We have
    \[
      u^{\BO_\eta}(\chi(X/B))=(\oplus_{i,j=0}^{\dim_kX}\rR^i\pi_{X*}\Omega_{X/B}^j)[j-i], \Tr)
    \]
    in $\BO_\eta^{0,0}(B) \simeq \W(\D_\perf(B)) \simeq \W(B)$.
  \item Let $f:\GW(B)\to \K_0(B)$ denote the forgetful map discarding the symmetric bilinear form. Suppose that the map
    \[
      (f, q):\GW(B)\to \K_0(B)\times W(B)
    \]
    is injective (this is the case if for example  $B$ is the spectrum of a local ring). Then 
    \[
      u^{\BO}(\chi(X/B))=(\oplus_{i,j=0}^{\dim_BX}\H^i(X, \Omega_{X/B}^j)[j-i], \Tr)
    \]
    in $\GW(\D_\perf(B)) \simeq \GW(B)$.
  \item Suppose $B$ is in $\Sm_k$ for $k$ a perfect field. Then the image $\widetilde{\chi(X/B)}$ of $\chi(X/B)$ in $\pi_{0,0}(1_B)(B) \simeq \H^0(B, \sGW)$ is given by  
    \[
      \widetilde{\chi(X/B)}=(\oplus_{i,j=0}^{\dim_BX}\rR^i\pi_{X*}\Omega_{X/B}^j)[j-i], \Tr) \in
      \H^0(B, \sGW).
    \]
    In particular, if $B=\Spec k$, then 
    \[
      \chi(X/k)=(\oplus_{i,j=0}^{\dim_BX}\H^i(X, \Omega_{X/k}^j)[j-i], \Tr)\in \GW(k).
    \]
  \end{enumerate}
\end{theorem}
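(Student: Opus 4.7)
The plan is to assemble parts (1)--(3) from the motivic Gauß--Bonnet formula (Theorem~\ref{thm:GaussBonnet}) combined with the comparison theorem (Theorem~\ref{thm:SLUniqueness}) applied to $\BO_\eta$, together with the explicit identity
\[
\hat{\pi}_{X*}\bigl(e^\BO(T_{X/B})\bigr) = \Bigl(\bigoplus_{i,j} \rR^i\pi_{X*}\Omega^j_{X/B}[j-i],\,\Tr\Bigr)
\]
and the matching of the Calmès--Hornbostel pushforward with the Panin--Walter $\SL$-orientation on zero-sections of vector bundles, both of which are already in hand from the discussion immediately preceding the theorem. Part (2) will then be bootstrapped from part (1) using the K-theoretic formula of Theorem~\ref{thm:EulerKThy}, and part (3) by sheafifying.

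For (1), apply Gauß--Bonnet in $\BO_\eta$ to obtain $u^{\BO_\eta}(\chi(X/B)) = \pi_{X*}\bigl(e^{\BO_\eta}(T_{X/B})\bigr)$. Since $\eta$ acts invertibly on $\BO_\eta$, Theorem~\ref{thm:SLUniqueness}(ii) identifies the abstract pushforward $\pi_{X*}$ with the Calmès--Hornbostel pushforward $\hat\pi_{X*}$ on the relevant cohomology group, and substituting the Hodge-theoretic formula for $\hat\pi_{X*}\bigl(e^\BO(T_{X/B})\bigr)$ recalled above gives the claim.

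For (2), note that $f : \GW(B)\to \K_0(B)$ is induced by the ring-spectrum map $\BO\to \KGL$ and $q : \GW(B)\to \W(B)$ by $\BO\to \BO_\eta$. Applying $f$ to $u^\BO(\chi(X/B))$ yields $u^\KGL(\chi(X/B))$, which by Theorem~\ref{thm:EulerKThy} equals the $\K_0$-class of $\bigoplus_{i,j} \rR^i\pi_{X*}\Omega^j_{X/B}[j-i]$; applying $q$ and invoking (1) gives the $\W$-class of the same perfect complex equipped with the trace pairing. Injectivity of $(f,q)$ then pins down $u^\BO(\chi(X/B))\in \GW(B)$. For $B$ the spectrum of a local ring, injectivity is elementary: $\ker(q)$ is the $\Z$-module generated by the class $H$ of the hyperbolic plane, and $f(H) = 2$, so $\ker(f)\cap\ker(q) = 0$.

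For (3), both sides of the asserted equality define sections of the Nisnevich sheaf $\pi_{0,0}(1_B) \simeq \sGW$ on $B$. Since $\sGW$ is unramified (as recalled in the Milnor--Witt discussion above), equality of sections may be verified after restriction to the localizations $\sO_{B,b}$ at points $b \in B$, each of which is a regular local ring where (2) applies. The formation of $\chi(X/B)$ is compatible with base change along the flat morphisms $\Spec \sO_{B,b}\to B$ (dualizability being preserved by the pullback functor $f^*$ in $\SH$), and Hodge cohomology is compatible with flat base change; hence the two sides agree stalk by stalk. The case $B = \Spec k$ is the specialization to a point. The main subtlety I anticipate is verifying these base-change compatibilities cleanly---in particular, compatibility of $\chi(-/B)$ with $f^*$ requires care because the identification $\pi_{0,0}(1_B)\simeq \sGW$ and Morel's isomorphism must be tracked through the pullback---but these are essentially formal once the six-functor formalism is invoked.
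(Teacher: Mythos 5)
Your parts (1) and (2) follow the paper's own route: the Gau{\ss}--Bonnet formula (Theorem~\ref{thm:GaussBonnet}) plus the comparison Theorem~\ref{thm:SLUniqueness} in its case (ii) applied to $\BO_\eta$, the Hodge-theoretic computation of $\hat{\pi}_{X*}(e^\BO(T_{X/B}))$ already in place before the statement, and then Theorem~\ref{thm:EulerKThy} together with injectivity of $(f,q)$ to lift from $\W(B)$ to $\GW(B)$; your observation that over a local ring $\ker(q)=\Z\cdot H$ and $f(H)=2$ is exactly what the parenthetical in (2) asserts.

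In (3), however, there is a genuine gap. The statement concerns $\widetilde{\chi(X/B)}$, the image of $\chi(X/B)$ under $\End_{\SH(B)}(1_B)\to \pi_{0,0}(1_B)(B)\simeq \H^0(B,\sGW)$, whereas (2) computes $u^{\BO}(\chi(X/B))\in \BO^{0,0}(B)\simeq \GW(B)$. Your stalkwise reduction (unramifiedness of $\sGW$, base change to the regular local rings $\sO_{B,b}$, where (2) applies) only reduces the problem, at each stalk, to comparing two elements living in two a priori different copies of $\GW$: the image of $\chi$ under Morel's identification of $\pi_{0,0}(1)$ with $\sGW$, and $u^{\BO}(\chi)$ under the identification $\BO^{0,0}\simeq \KO^{[0]}_0\simeq \GW$. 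These agree only if the unit map $u^{\BO}$ induces, through these two identifications, the identity map of $\GW$ --- and that is not formal. This is exactly the extra verification the paper carries out: the one-dimensional forms $\<\lambda\>$, $\lambda\in k^\times$, generate $\GW(k)$; under Morel's theorem $\<\lambda\>$ corresponds to the automorphism $\phi_\lambda$ of $\P^1_k$; and by \cite[Corollary 6.2]{AnanWitt} one has $u^{\BO}(\phi_\lambda)=\<\lambda\>$ in $\KO^{[0]}_0(k)$. Without this input (or an equivalent one), knowing $u^{\BO}(\chi)$ does not determine the image of $\chi$ in $\H^0(B,\sGW)$; the base-change bookkeeping you flag as the ``main subtlety'' is not the real issue --- the compatibility of Morel's isomorphism with the unit map of $\BO$ is.
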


\begin{proof}
  The first statement follows from our comparison theorem~\ref{thm:SLUniqueness}, as detailed above, together with the motivic Gau{\ss}-Bonnet theorem~\ref{thm:GaussBonnet}. Statement (2) follows from (1) and our result for algebraic K-theory, Theorem~\ref{thm:EulerKThy}. Finally, (3) follows from (2), after we check that the unit map $u^\BO$ induces    the identity map on $\GW(k)$ via
  \[
    \GW(k)\xymatrix{\ar[r]^{(\hbox{\tiny Morel})}_\sim&} 1_k^{0,0}(\Spec k)\xymatrix{\ar[r]^{u^\BO}_\sim&} \BO^{0,0}(\Spec k)\simeq \KO^{[0]}_0(k)=\GW(k),
  \]
  where the first isomorphism arises from  Morel's theorem \cite[Theorem 6.4.1, Remark 6.4.2]{MorelICTP}, \cite[Theorem 6.40]{MorelA1} identifying $1_k^{0,0}(\Spec k)$ with $\GW(k)$. The one dimensional forms $\<\lambda\>\in \GW(k)$, $\lambda\in k^\times$,  generate $\GW(k)$,  and via Morel's isomorphism $\<\lambda\>$ maps to the automorphism of $1_k$ induced  by the automorphism $\phi_\lambda:\P^1_k\to \P^1_k$, $\phi_\lambda((x_0:x_1))= (x_0:\lambda\cdot x_1)$.   By \cite[Corollary 6.2]{AnanWitt}, the image of $\phi_\lambda$ under the unit map $u^\BO$ is also $\<\lambda\>$, after the canonical identification $\BO^{0,0}(\Spec k)\simeq \KO^{[0]}_0(k) \simeq \GW(k)$.
\end{proof}

\begin{corollary}
  \label{cor:EulChar1}
  Let $k$ be a perfect field of characteristic different from two. Let $H \in \GW(k)$ denote the class of the hyperbolic form $x^2-y^2$. Let $X$ be a smooth and projective $k$-scheme.
  \begin{enumerate}[label=(\arabic*)]
  \item Suppose $X$ has odd dimension $2n-1$. Let
    \[
      m := \sum_{i+j<2n-1}(-1)^{i+j}\dim_k\H^i(X,\Omega_{X/k}^j)- \sum_{\hbox to 30pt{\vbox{\noindent \tiny$0\le i< j$\\$i+j= 2n-1$}}} \dim_k\H^i(X,\Omega_{X/k}^j).
    \]
    Then $\chi(X/k)=m\cdot H \in \GW(k)$.
  \item Assume $X$ has even dimension $2n$. Let
    \[
      m := \sum_{i+j<2n}(-1)^{i+j}\dim_k\H^i(X,\Omega_{X/k}^j)+ \sum_{\hbox to 30pt{\vbox{\noindent \tiny$0\le i< j$\\$i+j= 2n$}}} \dim_k\H^i(X,\Omega_{X/k}^j)
    \]
    and let $Q$ be the symmetric bilinear form
    \[
      \H^n(X, \Omega^n_{X/k})\times \H^n(X, \Omega^n_{X/k}) \xr{\cup}\H^{2n}(X, \Omega^{2n}_{X/k}) 
      \xr{\Tr} k.
    \]
    Then $\chi(X/k)=m\cdot H+Q \in \GW(k)$.
  \end{enumerate}
\end{corollary}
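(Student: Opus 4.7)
The plan is to unwind the identification $\chi(X/k) = [(\Hdg(X/k), \Tr)]$ provided by Theorem~\ref{thm:EulerCharKO}(3), where $\Hdg(X/k) = \bigoplus_{i,j=0}^d \H^i(X,\Omega^j_{X/k})[j-i]$ (writing $d := \dim X$) and the form pairs the $(i,j)$-summand with the $(d-i,d-j)$-summand by cup product followed by trace. I will decompose the form along the orbits of the involution $\sigma(i,j) := (d-i, d-j)$ on index pairs: pairings between summands in distinct $\sigma$-orbits vanish by construction, so $(\Hdg(X/k), \Tr)$ is an orthogonal sum of blocks, one per $\sigma$-orbit, and the class of each block in $\GW(k)$ can be computed separately.

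There is at most one fixed orbit, namely $\{(n,n)\}$, occurring exactly when $d = 2n$ is even; its block is $\H^n(X, \Omega^n_{X/k})$ in cohomological degree zero with form $Q$, contributing $[Q]$. Every other orbit has size two, say $\{(i,j), (d-i, d-j)\}$, and Serre duality presents the associated block as $V[j-i] \oplus V^\vee[i-j]$ with $V := \H^i(X,\Omega^j_{X/k})$, equipped with the canonical evaluation pairing. The core step will be to identify this block as metabolic: the self-pairing on $V[j-i]$ comes from $\cup$-then-$\Tr$ applied to $\H^i(X,\Omega^j)^{\otimes 2}$, and this vanishes unless $2i = 2j = d$, which is excluded since $(i,j)$ is not fixed by $\sigma$. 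Hence $V[j-i]$ is a Lagrangian subcomplex. Under the identification $\GW(k) \simeq \GW(\D_\perf(k))$, the class of the block then equals the image of $[V[j-i]] \in \K_0(k) = \Z$ under the hyperbolic map, namely $(-1)^{j-i} \dim_k V \cdot H$. This Lagrangian-to-hyperbolic identification, standard in characteristic $\neq 2$, together with its crucial sign, is the only real content; everything else is bookkeeping.

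It will then remain to assemble contributions over a transversal for $\sigma$. Taking the transversal consisting of all pairs with $i + j < d$ together with pairs with $i + j = d$ and $i < j$, and using $(-1)^{j-i} = (-1)^{i+j}$: for $d = 2n-1$ odd there is no fixed-orbit summand, while on the diagonal $i + j = d$ the sign $(-1)^{j-i} = (-1)^{d-2i}$ is $-1$, giving the formula in (1); for $d = 2n$ even the fixed orbit contributes $Q$, and on $i + j = d$ with $i < j$ the sign $(-1)^{j-i}$ is $+1$, giving the formula in (2).
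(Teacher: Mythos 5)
Your proposal is correct and follows essentially the same route as the paper: both start from the identification $\chi(X/k)=(\bigoplus_{i,j}\H^i(X,\Omega^j_{X/k})[j-i],\Tr)$ of Theorem~\ref{thm:EulerCharKO}, split the form into orthogonal blocks pairing $(i,j)$ with $(d-i,d-j)$, and identify each non-fixed block with the image of $[V[j-i]]=(-1)^{j-i}[V]$ under the hyperbolic map $\K_0(k)\to\GW(k)$, leaving the middle form $Q$ in the even-dimensional case. The only cosmetic difference is that you phrase the key step as ``Lagrangian implies hyperbolic,'' whereas the paper writes down the shifted hyperbolic form $h_n$ directly and cites its identification with $H([V[n]])$.
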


\begin{proof}
  For $V$ a finite dimensional $k$-vector space and $n \in \Z$, we have the symmetric bilinear form in $\D_\perf(k)$
  \[
    h_n: (V[n]\oplus V^\vee[-n]) \otimes (V[n]\oplus V^\vee[-n]) \to k
  \]
  whose restriction to $V[n]\otimes V^\vee[-n]$ is the canonical pairing of $V[n]$ with $V^\vee[-n]\simeq V[n]^\vee$, and $(-1)^n$ times this pairing on $V^\vee[-n]\otimes V[n]$. The corresponding class of $h_n$ 
  in $\GW(k)$ is $(-1)^n$ times the class of $h_0$, as $(V[n]\oplus V^\vee[-n], h_n)$ is the image of the class of $V[n]$ in $\K_0(k)$ under the hyperbolic map $H:\K_0(-)\to \KO^{[0]}_0(-)$ (see e.g. \cite[Theorem 2.6]{Walter}), and $[V[n]]=(-1)^n[V[0]]$ in $\K_0(\D_\perf(k))\simeq \K_0(k)$. 

  With this in mind, we may deduce the claim from the formula
  \[
    \chi(X/k)=(\oplus_{i,j=0}^{\dim_BX}\H^i(X, \Omega_{X/k}^j)[j-i], \Tr)
  \]
  of Theorem~\ref{thm:EulerCharKO}. Indeed, in the case $\dim X=2n-1$, the symmetric bilinear form $\Tr$ is the sum of the ``hyperbolic'' forms as above on
  \[
    \H^i(X,\Omega_{X/k}^j)[j-i]\oplus \H^{2n-1-i}(X,\Omega_{X/k}^{2n-1-j})[i-j]
  \]
  for $i+j<2n-1$, or $0\le i<j$ and $ i+j= 2n-1$; and the argument in the even dimensional case is the same, except that one has the remaining factor coming from the symmetric pairing on $\H^n(X, \Omega^n_{X/k})$.
\end{proof}

The next result was obtained in 1974 independently by Abelson \cite[Theorem 1]{Abelson} and Kharlamov \cite[Theorem A]{Kharlamov} using an argument of Milnor's relying on the Lefschetz fixed point theorem.

\begin{corollary}\label{cor:EulChar2}
  Let $k$ be a field equipped with an embedding $\sigma : k \inj \R$. Let $X$ be a smooth projective $k$-scheme of even dimension $2n$. Then
  \[
    |\chi^\top(X(\R))|\le \dim_k \H^n(X, \Omega^n_{X/k}).
  \]
\end{corollary}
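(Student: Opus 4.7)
The plan is to reduce to a bound over $\R$ by base change and then apply Corollary~\ref{cor:EulChar1}(2) together with the signature map. First, the embedding $\sigma : k \inj \R$ induces a symmetric monoidal base-change functor $\sigma^* : \SH(k) \to \SH(\R)$, which sends $\Sigma^\infty_\T X_+$ to $\Sigma^\infty_\T X_{\R+}$, preserves dualizable objects, and hence preserves Euler characteristics; the ring map $\sigma_* : \GW(k) \to \GW(\R)$ it induces on endomorphisms of the unit therefore carries $\chi(X/k)$ to $\chi(X_\R/\R)$. Since Hodge cohomology and its cup-product/trace pairings commute with the flat base change $k \to \R$, applying Corollary~\ref{cor:EulChar1}(2) over $\R$ gives
\[
  \chi(X_\R/\R) = m \cdot H + Q_\R \quad\text{in }\GW(\R),
\]
where $Q_\R$ is the symmetric bilinear form on $\H^n(X_\R, \Omega^n_{X_\R/\R}) \simeq \H^n(X, \Omega^n_{X/k}) \otimes_k \R$, of rank $\dim_k \H^n(X, \Omega^n_{X/k})$.

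Next I will apply the signature homomorphism $\mathrm{sgn} : \GW(\R) \to \Z$. Since $H$ is hyperbolic, $\mathrm{sgn}(H) = 0$, so $\mathrm{sgn}(\chi(X_\R/\R)) = \mathrm{sgn}(Q_\R)$; as the signature of any real symmetric bilinear form is bounded in absolute value by its rank, this yields
\[
  |\mathrm{sgn}(\chi(X_\R/\R))| \le \dim_k \H^n(X, \Omega^n_{X/k}).
\]

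It then remains to identify $\mathrm{sgn}(\chi(X_\R/\R))$ with $\chi^\top(X(\R))$. For this I will invoke the real Betti realization functor $\mathrm{Re}_\R : \SH(\R) \to \SH^\top$, which is symmetric monoidal, sends $\Sigma^\infty_\T X_{\R+}$ to $\Sigma^\infty X(\R)_+$, and therefore preserves Euler characteristics of dualizable objects; so $\mathrm{Re}_\R(\chi(X_\R/\R)) = \chi^\top(X(\R))$ in $\End_{\SH^\top}(\mS) = \Z$. The map $\GW(\R) \to \Z$ induced on endomorphisms of the unit via $\mathrm{Re}_\R$ and Morel's isomorphism coincides with the classical signature: this can be checked on generators $\<a\>$ for $a \in \R^\times$, which correspond under Morel's identification to the automorphism of $\P^1_\R$ sending $[x_0 : x_1]$ to $[x_0 : a x_1]$, whose real realization is a self-map of $S^2$ of degree $\mathrm{sgn}(a)$.

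The main point requiring care is this last identification, namely the compatibility of Morel's isomorphism $\GW(\R) \simeq \End_{\SH(\R)}(1_\R)$ with Betti realization and the classical signature map; the remainder of the argument consists of functoriality of $\chi$ under base change, flat base change for coherent cohomology, and the elementary inequality $|\mathrm{sgn}| \le \mathrm{rank}$ for real symmetric bilinear forms.
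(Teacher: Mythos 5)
Your argument is essentially the paper's: the proof in the text likewise writes $\chi(X/k)=m\cdot H+Q$ via Corollary~\ref{cor:EulChar1}(2), applies $\sigma_*$ and the bound $|\mathrm{sig}|\le\mathrm{rank}$, and invokes the fact that $\chi^\top(X(\R))$ is the signature of $\sigma_*(\chi(X/k))$ --- which it simply cites from \cite[Remarks 1.11]{LevEnum} where you sketch it via real Betti realization. One small correction to that sketch: the real realization of $\P^1_\R$ is $\P^1(\R)\simeq S^1$, not $S^2$ (the latter is the complex realization, which would compute the rank rather than the signature); the map $[x_0:x_1]\mapsto[x_0:ax_1]$ realizes to a self-map of $S^1$ of degree $\mathrm{sgn}(a)$, so your identification of the induced map $\GW(\R)\to\Z$ with the signature still goes through.
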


\begin{proof}
  We know that $\chi^\top(X(\R))$ is the signature of $\sigma_*(\chi(X/k)) \in \GW(\R)$ (see \cite[Remarks 1.11]{LevEnum}). The description of $\chi(X/k)$ given by Corollary~\ref{cor:EulChar1} gives the desired inequality
  \[
    |\operatorname{sig}\sigma_*(\chi(X/k))|\le \dim_k \H^n(X, \Omega_{X/k}). \qedhere
  \]
\end{proof}

\begin{remark}
  Let $k$ be a perfect field of characteristic different from two. The formula for the Euler characteristic given in Theorem~\ref{thm:EulerCharKO} shows that the invariant $\chi(X/k)$ is ``motivic'' in the following sense. Let $X$ and $Y$ be smooth projective $k$-schemes of respective even dimensions $2n$ and $2m$ and let $\alpha:X\dasharrow Y$ be a correspondence with $k$-coefficients of degree $n$, that is, an element $\alpha\in \CH^{m+n}(X\times Y)_k$. The correspondence $\alpha$ induces the map of $k$-vector spaces $\alpha^*:\H^m(Y,\Omega^m_{Y/k})\to \H^n(X, \Omega^n_{X/k})$. Suppose that $\alpha^*$ is an isomorphism and is compatible with the trace pairings on $\H^m(Y,\Omega^m_{Y/k})$ and $\H^n(X, \Omega^n_{X/k})$ appearing in Corollary~\ref{cor:EulChar1}. Then $\chi(X/k)=\chi(Y/k)$ in the Witt ring $\W(k)$. 

  For instance, supposing $k$ has characteristic zero, if the motives of $X$ and $Y$ (for homological equivalence with respect to de Rham cohomology) have a K\"unneth decomposition
  \[
    h(X) \simeq \oplus_{i=0}^{2n}h^i(X)\<i\>,\quad h(Y) \simeq \oplus_{i=0}^{2m}h^i(Y)\<i\>
  \]
  and $\alpha$ induces an isomorphism $\alpha^*:h^m(Y)\<m\>\to h^n(X)\<n\>$, compatible with the respective intersection products
  \[
    h^m(Y)\<m\>\otimes h^m(Y)\<m\>\to h^{2m}(Y)\<2m\>\xr{\pi_{Y*}}h^0(k) \simeq \Q,
  \]
  \[
    h^n(X)\<n\>\otimes h^n(X)\<n\>\to h^{2n}(X)\<2n\>\xr{\pi_{X*}}h^0(k) \simeq \Q,
  \]
  then $\chi(Y/k)=\chi(X/k)$ in $\W(k)$. 

  Presumably, merely having an isomorphism of motives $h^n(X)\<n\>\simeq h^m(Y)\<m\>$ would not suffice to yield $\chi(Y/k)=\chi(X/k)$ in $\W(k)$, but we do not have an example.
\end{remark}

\subsection{Descent for the motivic Euler characteristic}\label{subsec:Descent}

Let $k$ be a perfect field of characteristic different from two. With the explicit formula for $\chi(X/k)$ given by Theorem~\ref{thm:EulerCharKO}, we may find $\chi(X/k)$ for forms $X$ of some $k$-scheme $X_0$ by the usual twisting construction; this works for all manners of descent but we confine ourselves to the case of Galois descent here.

Let $X_0, X$ be smooth projective $k$-schemes of even dimension $2n$. Let $K$ be a finite Galois extension field of $k$ with Galois group $G$. Let $X_K:=X\times_kK$, $X_{0K}:=X_0\times_kK$, and suppose we have an isomorphism  $\phi:X\times_kK\to X_0\times_kK$. This gives us the cocycle $\{\psi_\sigma\in \Aut_K(X_0\times_kK)\}_{\sigma\in G}$ where $\psi_\sigma:=\phi^\sigma\circ\phi^{-1}$. Letting 
\begin{align*}
&b_0:\H^n(X_0,\Omega^n_{X_0/k})\times \H^n(X_0,\Omega^n_{X_0/k})\to k,\\
&b:\H^n(X,\Omega^n_{X/k})\times \H^n(X,\Omega^n_{X/k})\to k
\end{align*}
denote the respective symmetric bilinear forms $\Tr(x\cup y)$, the isomorphism $\phi$ induces an isometry 
\[
\phi^*:(\H^n(X_{0K},\Omega^n_{X_{0K}/K}), b_{0K})\to (\H^n(X_{K},\Omega^n_{X_{K}/K}), b_{K}),
\]
and the cocycle $\{\psi_\sigma\}_{\sigma \in G}$ determines a cocycle $\{(\psi_\sigma^*)^{-1} \in \rO(b_0)(K)\}_{\sigma \in G}$. Twisting by the latter cocycle allows one to recovers $b$ from $b_0$; explicitly, this works as follows.

Firstly, as usual, one recovers the $k$-vector space $\H^n(X,\Omega^n_{X/k})$ from the $K$-vector space $\H^n(X_{0K},\Omega^n_{X_{0K}/K})$ as the $G$-invariants for the map $x\mapsto \psi_\sigma^{*-1}(x^\sigma)$. Secondly, letting $A\in \GL(\H^n(X_{0K},\Omega^n_{X_{0K}/K}))$ be a change of basis matrix comparing the $k$-forms
$\H^n(X_0,\Omega^n_{X_0/k})\subset  \H^n(X_{0K},\Omega^n_{X_{0K}/K})$ and 
$\H^n(X,\Omega^n_{X/k})\subset  \H^n(X_{0K},\Omega^n_{X_{0K}/K})$, we recover $b$ (up to $k$-isometry) as 
\[
b(x,y)=b_0(Ax, Ay)=:b_0^A(x,y).
\]

Having performed this twist at the level of symmetric bilinear forms, we may now pass Grothendieck-Witt classes to describe the Euler characteristic of $X$: namely, Corollary~\ref{cor:EulChar1}(2) gives
\[
\chi(X_0/k)=[b_0+m\cdot H],\quad
\chi(X/k)=[b_0^A+m\cdot H]
\]
in $\GW(k)$.

\begin{remark}
  In the case of a smooth projective surface $S$ with $p_g(S)=0$, over a characteristic zero field $k$, the twisting construction reduces to a computation involving $\CH^1(S_{\bar{k}})/\kern-4pt\sim_\num$ as a $\Gal(k)$-module; here $\bar{k}$ is the algebraic closure of $k$ and $\sim_\num$ is numerical equivalence. Indeed, the assumption $p_g(S)=0$ implies that the cycle  class map in Hodge cohomology
  \[
    \cyc^\Hdg:\CH^1(S_{\bar{k}})\to \H^1(S_{\bar{k}}, \Omega^1_{S/\bar{k}})
  \]
  induces an isomorphism 
  \[
    {\CH^1(S_{\bar{k}})/\kern-4pt\sim_\num} \otimes_\Z\bar{k}\xr{\sim} \H^1(S_{\bar{k}}, \Omega^1_{S/\bar{k}}) \simeq \H^1(S, \Omega^1_{S/k})\otimes_k\bar{k}
  \]
  and the cycle class map $\cyc^\Hdg$
  transforms the intersection product on $\CH^1(S_{\bar{k}})/\kern-4pt\sim_\num$ to the quadratic form $b_0$ on $\H^1(S_{\bar{k}}, \Omega^1_{S/\bar{k}})$, induced by cup product and the trace map.  Thus, our quadratic form $b$ on $\H^1(S, \Omega^1_{S/k})$ is equivalent to the one gotten by twisting the $\bar{k}$-linear extension of the intersection product on $\CH^1(S_{\bar{k}})/\kern-4pt\sim_\num$ by the natural Galois action.

  Analogous comments hold for a ``geometrically singular'' variety, by which we mean a smooth projective $k$-scheme $X$ of dimension $2n$ such that $\H^n(X_{\bar{k}},\Omega^n_{X_{\bar{k}}/\bar{k}})$ is spanned by cycle classes,  where we replace $\CH^1/\kern-4pt
  \sim_\num$ with $\CH^n/\kern-4pt\sim_\num$.  For example, one could take a K3 surface with Picard rank 20 over $\bar{k}$ or a cubic fourfold $X$ with $\H^2(X_{\bar{k}},\Omega^2_{X_{\bar{k}}/k})\simeq \bar{k}^{21}$ spanned by algebraic cycles.
\end{remark}

\begin{examples}
  \begin{enumerate}[label=(\arabic*),leftmargin=*]
  \item As as simple example, take $S$ to be a quadric surface in $\P^3_k$ defined by a degree two homogeneous form $q(X_0,\ldots, X_3)$; we may assume that $q$ is a diagonal form, 
    \begin{multline*}
      q(X_0,\ldots, X_3)=a_0X_0^2+\sum_{i=1}^3a_iX_i^2\\=a_0(X_0+\sqrt{-a_1/a_0}X_1)(X_0-\sqrt{-a_1}X_1)\\
      +a_2(X_2-\sqrt{-a_3/a_2}X_3)(X_2+\sqrt{-a_3/a_2}X_3).
    \end{multline*}
    This trivializes $\CH^1(S)$ over $K:=k(\sqrt{-a_0a_1}, \sqrt{-a_2a_3})$, namely $\CH^1(S)=\Z\ell_1\oplus\Z\ell_2$ with $\ell_1$ defined by $(X_0-\sqrt{-a_1}X_1)=(X_2-\sqrt{-a_3/a_2}X_3)=0$ and $\ell_2$ defined by $(X_0-\sqrt{-a_1}X_1)=(X_2+\sqrt{-a_3/a_2}X_3)=0$. Embedding $\Gal(K/k)\subset \Gal(k(\sqrt{-a_0a_1})/k)\times  \Gal(k(\sqrt{-a_2a_3})/k)=\<\sigma_1\>\times\<\sigma_2\>$, the  Galois action is given by $\sigma_1(\ell_1, \ell_2)=\sigma_2(\ell_1, \ell_2)= (\ell_2, \ell_1)$. A Galois-invariant basis is thus given by $((\ell_1+\ell_2), \sqrt{a_0a_1a_2a_3}(\ell_1-\ell_2))$, and the intersection form in this basis has matrix
    \[
      \begin{pmatrix}
        2&0\\
        0&-2a_0a_1a_2a_3
      \end{pmatrix}.
    \]
    In other words, $\chi(S/k)=\<2\>+\<-2a_0a_1a_2a_3\>$.

  \item Suppose $S$ is the blowup of $\P^2_k$ along a 0-dimensional closed subscheme $Z\subset \P^2_k$, with $Z$ \'etale over $k$. Let $\ell$ denote the class of a line in $\CH^1(\P^2)$. Writing $Z_{\bar{k}}=\{p_1,\ldots, p_r\}$, we have $\CH^1(S_{\bar{k}}) \simeq \Z\cdot \ell \oplus (\oplus_{i=1}^r\Z\cdot p_r)$, with the evident Galois action and with intersection form the diagonal matrix $(1, -1,\ldots, -1)$. It is then easy to show that the twisted quadratic form $\chi(S/k)$ is $\<1\>-\Tr_{Z/k}(\<1\>)$.
  \end{enumerate}
\end{examples}

These last two examples have been computed  by different methods before: (1) is a special case of \cite[Corollary 13.2]{LevEnum} and (2) is a special case of \cite[Proposition 1.10]{LevEnum}. Here is a more interesting example.

\begin{example}
  Let $\pi:S\to C$ be a a conic bundle over a smooth projective curve $C$, all defined over $k$; we assume for simplicity that $k\subset \C$. Let $Z\subset C$ be the degeneracy locus of $\pi$: that is, $Z$ is the reduced proper closed subscheme of $C$ over which $\pi$ is not smooth. For each geometric point $z$ of $Z$, the fiber $\pi^{-1}(z)$ is isomorphic to two distinct lines in $\P^2$: $\pi^{-1}(z)=\ell_z\cup \ell_z'$. There is a ``double section'' $D\subset S$ with $D\to C$  a finite degree two morphism, and with $D\cdot \ell_z=1=D\cdot \ell_z'$ for all $z\in Z(\bar{k})$. 

  Over $\bar{k}$, the bundle $S$ is isomorphic to the blow-up of a $\P^1$-bundle $\bar{S}_{\bar{k}}\to C_{\bar{k}}$ along a finite set $Z'\subset \bar{S}_{\bar{k}}$ with $Z'\xr{\sim} Z_{\bar{k}}$ via $\pi$.

  Suppose $Z_{\bar{k}}=\{z_1,\ldots z_r\}$.  If we fix a closed point $c_0\in C\setminus Z$ of degree $d$ over $k$, we have the following basis for $\CH^1(S_{\bar{k}})_\Q/\kern-4pt \sim_\num$: 
  \[
    \ell_{z_1}-\ell_{z_1}',\ldots, \ell_{z_r}-\ell_{z_r}', D,  \pi^{-1}(c_0).
  \]
  We have the finite degree two extension $p:\tilde{Z}\to Z$, where for each $z\in Z$, $p^{-1}(z)$ corresponds to the pair of lines $\ell_z, \ell_z'$. Let $L:=k(\{z_1,\ldots, z_r\})\supset k$ and let $G:=\Aut(L/k)$. Writing $k(\tilde{Z})=k(Z)(\sqrt{\delta})$ for some $\delta\in \sO_Z^\times$, we have a basis of $\CH^1(S_L)_\Q/\kern-4pt \sim_\num$ given by
  \[
    v_1,\ldots, v_r,  D,  \pi^{-1}(c_0),
  \]
  with $v_i:=\sqrt{d}(\ell_{z_i}-\ell_{z_i}')$. The intersection form on $\<v_1,\ldots, v_r\>$ is the diagonal matrix $(-4\delta(z_1),\ldots, -4\delta(z_r))$, the subspaces $\<v_1,\ldots, v_r\>$ and $\<D,  \pi^{-1}(c_0)\>$ are perpendicular and $\<D,  \pi^{-1}(c_0)\>$ is hyperbolic. Moreover, the automorphism group $\Aut(L/k)$ acts on $\<v_1,\ldots, v_r\>$ just as it does on $\<z_1,\ldots, z_r\>$. From this it follows that the twisted intersection form $b$ is given by
  \[
    b=H-\Tr_{k(Z)/k}(\<\delta\>),
  \]
  and hence
  \[
    \chi(S/k)=m\cdot H-\Tr_{k(Z)/k}(\<\delta\>)
  \]
  with 
  \[
    m=2-\dim_k\H^0(S,\Omega^1_{S/k})-\dim_k\H^1(S,\sO_X)=\dim_\Q \H^0(S^\an,\Q)-\dim_\Q \H^1(S^\an,\Q)+1,
  \]
  where $S^\an$ is the complex manifold associated to $S_\C$.

  As a particular example, we may take $S$ to be a cubic surface $V\subset \P^3_k$ with a line $\ell$. Projection from $\ell$ realizes $V$ as a conic bundle $\pi:V\to \P^1_k$, with degeneracy locus $Z\subset \P^1_k$ a reduced closed subscheme of degree 5 over $k$.   The above implies that the symmetric bilinear form $b_V$ is given by
  \[
    b_V=H-\Tr_{Z/k}(\<\delta\>)
  \]
  and computes $\chi(S/k) = 2H-\Tr_{Z/k}(\<\delta\>)$.
\end{example}

\begin{remark} In \cite{BFS}, Bayer-Fluckiger and Serre consider the finite $k$-scheme $W$ representing the 27 lines on a cubic surface $V$ and compute the trace form $\Tr_{W/k}(\<1\>)$ in \cite[Theorem 5, Interpretation 7.3]{BFS}. They identify their form $q_{6,V}$ with the trace form on $\H^1(V,\Omega_{V/k})$ and show that
\[
\Tr_{W/k}(\<1\>)=\lambda^2b_V + (\<-1\>-\<2\>)b_V+ 7 -\<-2\>.
\]

\end{remark}

 \end{document}